\theoremstyle{theorem}
\newtheorem{Def}{Definition}[section]
\newtheorem{Eg}{Example}
\newtheorem{Prop}[Def]{Proposition}
\newtheorem{Lem}[Def]{Lemma}
\newtheorem{Thm}[Def]{Theorem}
\newtheorem{Ass}[Def]{Assumption}
\theoremstyle{definition}
\newtheorem{Rem}[Def]{Remark}
\newcommand{\e}{\mathbb{E}}
\newcommand{\real}{\mathbb{R}}
\newcommand{\sign}{\text{sign}}
\newcommand{\1}{{\bf 1}}
\newcommand{\F}{\mathcal{F}}
\newcommand{\G}{\mathcal{G}}
\newcommand{\ff}{\mathbb{F}}
\newcommand{\R}{\mathbb{R}}
\patchcmd{\thebibliography}{\section*}{\section}{}{}
\title{Well-posedness and approximation of some one-dimensional L\'evy-driven non-linear SDEs.}
\date{}
\author{Noufel Frikha}
\address{Noufel Frikha, Universit\'e de Paris, Laboratoire de Probabilit\'es, Statistiques et Mod\'elisation, F-75013 Paris, France}
\email{frikha@math.univ-paris-diderot.fr}
\author{Libo Li}
\address{Libo Li, Department of Mathematics and Statistics, University of New South Wales, Sydney, Australia}
\email{libo.li@unsw.edu.au }
\subjclass[2010]{Primary 60H10, 60G46; Secondary 60H30, 35R09}
\keywords{McKean-Vlasov SDEs; L\'evy driven SDEs; strong uniqueness; propagation of chaos.\\
 The financial support from the Europlace Institute of Finance is gratefully acknowledged.
}
\begin{document}

\maketitle

\begin{abstract}
In this article, we are interested in the strong well-posedness together with the numerical approximation of some one-dimensional stochastic differential equations with a non-linear drift, in the sense of McKean-Vlasov, driven by a spectrally-positive L\'evy process and a Brownian motion. We provide criteria for the existence of strong solutions under non-Lipschitz conditions of Yamada-Watanabe type without non-degeneracy assumption. The strong convergence rate of the propagation of chaos for the associated particle system and of the corresponding Euler-Maruyama scheme are also investigated. In particular, the strong convergence rate of the Euler-Maruyama scheme exhibits an interplay between the regularity of the coefficients and the order of singularity of the L\'evy measure around zero.  \end{abstract}

\vskip 20pt 
\section{Introduction}\label{introduction}
In this paper, we are interested in the strong well-posedness as well as the numerical approximation of the following one-dimensional non-linear stochastic differential equation (SDE for short) with positive jumps and with dynamics
\begin{align}
X_t & = \xi + \int_0^t b(s, X_s, [X_s]) \, ds + \int_0^t \sigma(s, X_{s}) \, dW_s +  \int_0^t   h(s, X_{s-}) \, dZ_s,  \label{mckean:sde}\\
Z_t & = \int_0^t \int_0^{\infty} z \widetilde{N}(ds, dz) \label{levy:process}
\end{align}

\noindent where $W=(W_t)_{t\geq0}$ is a standard one-dimensional Brownian motion, $\widetilde{N}$ is a compensated Poisson random measure with intensity measure $ds\nu(dz)$ satisfying the condition $\int_0^{\infty} (z\wedge z^2) \nu(dz) < \infty$ and with starting point $\xi$ (with law $\mu$). We here assume that the random variables $\xi$, $W$ and $\widetilde{N}$ are defined on some filtered probability space $(\Omega, \mathcal{F}, (\mathcal{F}_t)_{t\geq 0}, \mathbb{P})$ satisfying the usual conditions and that they are mutually independent. Throughout the article, we will denote by $\mathcal{P}(\R)$ the space of probability measures on $\R$, $\mathcal{P}_q(\R)$ the subset of probability measures in $\mathcal{P}(\R)$ that have finite moment of order $q>0$, equipped with the corresponding Wasserstein distance $W_q$, and by $[\theta]$ the law of a random variable $\theta$.

The SDE \eqref{mckean:sde} generally appears as the mean-field limit of an individual particle evolving within a system of particles, which interact with each other only through the empirical measure of the whole system, when the number of particles goes to infinity. This property is commonly referred in the literature as the propagation of chaos phenomenon. The weak and strong well-posedness in the diffusive setting, i.e. when $h\equiv 0$, have attracted considerable attention of the research community from the last decades to the present day. We  refer the reader to the works of Funaki \cite{Funaki1984}, Oelschlaeger \cite{oelschlager1984}, G\"artner \cite{gartner}, Sznitman \cite{Sznitman}, Jourdain \cite{jourdain:1997} and more recently, Li and Min \cite{Li:min:2}, Chaudru de Raynal \cite{CHAUDRUDERAYNAL}, Mishura and Veretennikov \cite{mishura:veretenikov}, Lacker \cite{la:18}, Chaudru de Raynal and Frikha \cite{chau:frik:18} for a small and incomplete sample. When $Z$ is a square integrable L\'evy process, the strong well-posedness of multi-dimensional non-linear SDEs with time-homogeneous Lipschitz coefficients $\R^d \times \mathcal{P}_2(\R^d) \ni (x, \mu) \mapsto b(x, \mu), \sigma(x, \mu), \, h(x, \mu)$ together with the strong convergence rate of propagation of chaos have been investigated by Jourdain, M\'el\'eard and Woyczynski \cite{jour:mele:woyc:08}. The well-posedness in the weak and strong sense of some multi-dimensional non-linear SDEs driven by non-degenerate symmetric $\alpha$-stable L\'evy processes, $\alpha \in (0,2)$, under some mild H\"older regularity assumptions on the drift and diffusion coefficients with respect to both space and measure variables have been recently established by Frikha, Konakov and Menozzi \cite{Frikha:Konakov:Menozzi}. 

In the absence of the Lipschitz regularity of coefficients and/or non-degeneracy of the underlying noise, it actually turns out to be challenging to establish the well-posedness of such non-linear dynamics and to obtain some quantitative rates of convergence for the propagation of chaos by the related system of particles and for the time discretization by the so-called Euler-Maruyama scheme. Concerning the well-posedness, we again mention the works \cite{CHAUDRUDERAYNAL,chau:frik:18,Frikha:Konakov:Menozzi,jourdain:1997,la:18}. For some quantitative rates of propagation of chaos in the diffusive setting, we mention the recent work of Holding \cite{holding} who studies systems of interacting particles with a constant diffusion coefficient and a drift coefficient given by an H\"older continuous interacting kernel of convolution type are established. Still in the non-degenerate diffusive setting, some new quantitative estimates for propagation of chaos have been recently obtained by Chaudru de Raynal and Frikha \cite{chau:frik:18:b} for McKean-Vlasov SDEs under some mild H\"older regularity assumptions on the drift and diffusion coefficients with respect to both space and measure variables. 

Our main objective here is twofold as it consists specifically in investigating the strong well-posedness and the numerical approximation of the one-dimensional non-linear SDE \eqref{mckean:sde} without any non-degeneracy condition on the underlying noise and only assuming that $\mathbb{R} \times \mathcal{P}_1(\R) \ni(x, \mu)\mapsto b(t, x, \mu)$ is one-sided Lipschitz, uniformly in time, and that both coefficients $\sigma$ and $h$ are H\"older continuous in space, uniformly in time. 

Motivated by the recent developments of continuous-state branching processes, the well-posedness of such dynamics in the linear setting, i.e. when there is no dependence with respect to the measure argument in the drift coefficient, was first addressed by Fu and Li \cite{Fu:Li} and then later extended by Li and Mytnik \cite{Li:Mytnik}. Equations of such type have recently found applications in financial mathematics due to the positivity of the solution and the ability to capture self-exciting effects through the inclusion of the L\'evy component $Z$. 
We refer to the works of Jiao et al. \cite{JMS, JMSS, JMSZ} for the modelling of sovereign interest rate, electricity prices and stochastic volatility. For multi-curve term structure models we refer to Fontana et al. \cite{FGS}. Our results thus allow to consider and to study from both theoretical and numerical point of view some mean-field extension of the models proposed in the aforementioned references. In this direction, it is worth mentioning for instance the recent works of Bo and Capponi \cite{bo:capponi} and of Fouque and Ichiba \cite{fouque:ichiba} for the modelling of systemic risk of a banking system through the lens of interacting particle system driven by independent Brownian motions.   

Concerning the study of the time discretization schemes in the linear framework, we can mention the two recent works of Li and Taguchi \cite{Li:Taguchi:a,Li:Taguchi:b} concerning the strong rate of convergence of the corresponding Euler-Maruyama approximation scheme and of a positivity preserving time discretization scheme. In the spirit of the aforementioned works, we employ the Yamada-Watanabe approximation technique to derive both theoretical and numerical results. Let us however mention that compared to the two recent works \cite{Li:Taguchi:a,Li:Taguchi:b}, we here improve the strong convergence rate of the Euler-Maruyama scheme and we also remove the restrictive boundedness assumption on the diffusion and jump coefficients at the price of some integrability constraints. Let us also mention that such boundedness assumption on the jump coefficient $h$ has appeared persistently in the literature on the numerical approximation of $\alpha$-stable driven SDEs by the Euler-Maruyama scheme, see for example the works of Hashimoto \cite{Ha} and Hashimoto and Tsuchiya \cite{HaTsu}.


The organization of the paper is as follows. The main theorems together with their proofs are presented in Section \ref{main:results}. In Section 3, we mention briefly conditions for which positivity of the solution holds and some potential applications. Section \ref{appendix:section} is an appendix and contains some important but auxiliary results. In particular, a general weak existence result as well as as some moment estimates for some non-linear SDEs of jump-type with continuous coefficients having at most linear growth are presented.  

\section{Assumptions and main results}\label{main:results}
\subsection{Strong solutions to the mean-field SDE}
Let $(\Omega, \F, (\mathcal{F}_t)_{t\geq0}, \mathbb{P})$ be a filtered probability space satisfying the usual conditions. On this probability space, let us consider a standard one-dimensional $(\mathcal{F}_t)_{t\geq0}$-Brownian motion $W$ and a compensated $(\mathcal{F}_t)_{t\geq0}$-Poisson random measure $\widetilde{N}$ with L\'evy measure $\nu$ on $(\R_+, \mathcal{B}(\R_+))$ such that $\int_{\R_+} z\wedge z^2 \, \nu(dz) < \infty$. Suppose that $W$ and $\widetilde{N}$ are independent. Given a real-valued random variable $\xi$ defined on $(\Omega, \F, (\mathcal{F}_t)_{t\geq0}, \mathbb{P})$ and independent of the pair $(W, \widetilde{N})$, we will say that strong existence holds for the non-linear SDE \eqref{mckean:sde} starting from $\xi$ with law $\mu$ at time $0$ if there exists a real-valued and c\`adl\`ag process $X=(X_t)_{t\geq0}$, with marginal law $[X_t]$ at time $t$, adapted to the augmented (and completed) natural filtration $(\mathcal{G}_t)_{t\geq0}$ generated by $W$ and $\widetilde{N}$, that satisfies equation \eqref{mckean:sde} almost surely for all $t\geq0$. We will say that weak existence holds for \eqref{mckean:sde} starting from $\xi$ with law $\mu$ at time $0$ if there exists a filtered probability space $(\Omega, \F, (\mathcal{F}_t)_{t\geq0}, \mathbb{P})$ and a triplet $(X, W, \widetilde{N})$ of $(\mathcal{F}_t)_{t\geq0}$-adapted process such that $X$ is c\`adl\`ag with $[X_0]=\mu$, $W$ is a Brownian motion, $\widetilde{N}$ is a Poisson random measure independent of $W$ with L\'evy  measure $\nu$ and satisfying \eqref{mckean:sde}. We will say that pathwise uniqueness holds for the same equation if for any two solutions $X^1$ and $X^2$ of \eqref{mckean:sde} (possibly defined on two different filtered probability spaces) satisfying $X^1_0=X^2_0= \xi$, $W^1=W^2$ and $\widetilde{N}^1=\widetilde{N}^2$, we have $X^1_t=X^2_t$ almost surely for every $t\geq0$. In particular, note that strong uniqueness implies uniqueness of marginal laws, i.e. $[X^1_t]=[X^2_t]$ for all $t\geq0$, which in turn implies that the non-linear SDE \eqref{mckean:sde} may be regarded as a linear SDE with time-inhomogeneous coefficients.

In the spirit of Li and Mytnik \cite{Li:Mytnik}, we introduce the quantity
\begin{align*}
\alpha_{\nu}:=\inf\{\beta>1: \lim_{x \to 0+} x^{\beta-1} \int_{x}^{\infty}z \, \nu(dz)=0\}
\end{align*}

\noindent which represents the order of singularity of the L\'evy measure at zero. For instance, if $\nu$ is the L\'evy measure of a spectrally positive $\alpha$-stable like process, that is, if $\nu(dz) = \textbf{1}_{(0,\infty)}(z) g(z)/ z^{1+\alpha} \, dz$, with $\alpha \in [1,2]$, $g$ being a non-negative bounded and continuous function on $\R_+$, one has $\alpha_\nu = \alpha$ and the infimum is actually achieved. We recall from Lemma 2.1 of \cite{Li:Mytnik} that $\alpha_\nu \in [1,2]$. Moreover, for any $\alpha>\alpha_{\nu}$, 
\begin{equation}
\lim_{x \to 0+} x^{\alpha-2} \int_{0}^{x} z^2 \nu(dz)=0 \quad \mathrm{and} \quad \lim_{x \to 0+} x^{\alpha-1} \int_{x}^{\infty}z \nu(dz)=0. \label{left:right:tail:asymptotics}
\end{equation}

Note that in our current setting, both limits in \eqref{left:right:tail:asymptotics} are also valid for $\alpha =2$. We now introduce our assumptions on the coefficients $b$, $\sigma$ and $h$ in order to tackle the strong well-posedness of the SDE \eqref{mckean:sde}.\\

\begin{Ass}\label{Ass_1} 
$\,$
\begin{itemize}	
%
	\item[(i)] The coefficients $\R_+ \times \R \times \mathcal{P}_1(\R) \ni (t, x, \mu) \mapsto b(t, x, \mu), \sigma(t, x), \, h(t, x) $ are continuous functions, $\mathcal{P}_1(\R)$ being equipped with the Wasserstein metric $W_1$, and have at most linear growth in $x$ and $\mu$ locally uniformly with respect to $t$, that is, for any $T>0$, there exists some positive constant $C_T$ such that for any $(x, \mu) \in \R \times \mathcal{P}(\R)$
\begin{equation}\label{linear:growth:condition}
 \sup_{0\leq t \leq T} \left\{|b(t, x, \mu)| + |\sigma(t, x)| + |h(t, x)|\right\} \leq C_T (1 + | x |+ W_1(\mu, \delta_0)). 
\end{equation}

	\item[(ii)] There exists some positive constant denoted by $[b]_L$ such that for all $(t, x, y, \mu, \nu) \in  [0,\infty) \times \R^2 \times \mathcal{P}_1(\R)^2$, 
$$
\sign(x-y)(b(t, x, \mu) - b(t, y, \nu)) \leq [b]_L (|x-y| + W_1(\mu, \nu)). 
$$
	
		
	\item[(iii)] For any $t\geq0$, the diffusion coefficient $x\mapsto \sigma(t, x)$ is $\gamma$-H\"older continuous with $\gamma \in [1/2,1]$ and $x\mapsto h(t, x)$ is $\eta$-H\"older continuous with $\eta \in (1-1/{\alpha}_\nu, 1]$, uniformly with respect to $t$. We denote respectively by $[\sigma]_\gamma$ and $[h]_\eta$ the (uniform) H\"older modulus of $\sigma$ and $h$.
	
	\item[(iv)] There exists a positive constant denoted by $[h]_L$ such that for all $t\in [0, \infty)$, for all $x, y \in \R$, 
	$$
	\sign(y-x) (h(t, x) - h(t, y)) \leq [h]_L |x-y|.
	$$
		
\end{itemize}
\end{Ass}

%
%
%
%
\vskip5pt
The assumption (i) is required in order to prove weak existence of solutions to the SDE \eqref{mckean:sde} by employing a compactness argument together with some moment estimates. Though the previous result seems natural, to the best of our knowledge, it is new. The precise statement together with its proof are postponed to the Appendix, see Lemma \ref{weak:solution:and:moment:estimate} in Section \ref{weak:existence:lemma}. The regularity assumptions (ii), (iii) and (iv) allow to prove pathwise uniqueness as well as to study the propagation of chaos for the corresponding system of interacting particles. As already mentioned in the introduction, we will rely on the Yamada-Watanabe approximation technique which is briefly exposed in Section \ref{YW}.  

Before proceeding to the statement of our first main result, let us make some additional comments on Assumption \ref{Ass_1}. Observe that the condition on $b$ holds as soon as $b(t, x, \mu) = b_1(t, x) + b_2(t, x, \mu)$, with $b_1$ non-increasing in space, uniformly in time, and $b_2(t, ., .)$ Lipschitz-continuous on $\R \times \mathcal{P}_1(\R)$, uniformly with respect to time $t$. Our assumption on $h$ holds if $h(t, x) = h_1(t, x) + h_2(t, x)$ with $h_1(t,.)$ Lipschitz continuous (uniformly in time) and $h_2(t, .)$ $\eta$-H\"older continuous (uniformly in time) for some $\eta \in (1-1/\alpha_\nu,1]$ and non-decreasing in space. This last assumption is reminiscent of those introduced in \cite{Li:Mytnik}.

We now present two results concerning weak existence and pathwise uniqueness of solutions for the mean-field SDE \eqref{mckean:sde}.

\begin{Lem}Suppose that assumption \ref{Ass_1} (i) holds and that $\int_{\R} |x|^\beta \mu(dx) + \int_{ z \geq1} z^\beta \nu(dz) < \infty$ for some $\beta>1$. Then, weak existence holds for the SDE \eqref{mckean:sde} starting at time $0$ from the initial point $\xi$ with law $\mu$.
\end{Lem}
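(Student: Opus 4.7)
The plan is to construct a weak solution by a Picard-type iteration on the flow of marginal laws, combined with a Skorokhod compactness argument and identification of the limit through the associated martingale problem. Set $m^0_t := \mu$ for all $t \in [0,T]$. Inductively, given a flow $m^n \in \mathcal{C}([0,T], \mathcal{P}_1(\R))$ (equipped with $W_1$) with uniformly bounded $\beta$-th moment, the linear (time-inhomogeneous) SDE
\begin{align*}
X^{n+1}_t = \xi + \int_0^t b(s, X^{n+1}_s, m^n_s)\, ds + \int_0^t \sigma(s, X^{n+1}_s)\, dW_s + \int_0^t h(s, X^{n+1}_{s-})\, dZ_s
\end{align*}
admits a weak solution $X^{n+1}$ by classical weak existence results for one-dimensional SDEs with jumps and continuous, linear-growth coefficients. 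Indeed, the drift $(t,x)\mapsto b(t, x, m^n_t)$ is jointly continuous with linear growth in $x$ thanks to Assumption \ref{Ass_1}(i) and the $W_1$-continuity of $t\mapsto m^n_t$, itself obtained inductively from moment bounds and the càdlàg property (using that the driving Lévy process has no fixed jumps). Set $m^{n+1}_t := [X^{n+1}_t]$.

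Next I would derive uniform moment and tightness estimates. Applying It\^o's formula to $x \mapsto (\epsilon + x^2)^{\beta/2}$ (and letting $\epsilon \downarrow 0$), combined with the linear growth \eqref{linear:growth:condition}, the Burkholder-Davis-Gundy inequality, the hypotheses $\e[|\xi|^\beta]<\infty$ and $\int_{z\geq 1} z^\beta\,\nu(dz)<\infty$ (the latter ensuring that the jump part of $Z$ is in $L^\beta$), and Gronwall's lemma, one obtains
\begin{align*}
\sup_n \e\Bigl[\sup_{0\leq t\leq T} |X^n_t|^\beta\Bigr] \leq C_T\bigl(1 + \e[|\xi|^\beta]\bigr),
\end{align*}
and in particular $\sup_n \sup_{t\leq T} W_1(m^n_t, \delta_0) < \infty$ together with uniform integrability of the family $\{X^n_t\}$. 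This allows to verify Aldous's tightness criterion for $(X^n)$ on the Skorokhod space $\mathbb{D}([0,T], \R)$: compact containment is immediate from the moment bound, and the oscillation condition reduces, via linear growth, to an $L^1$ estimate $\e[|X^n_{\tau_n + \delta_n} - X^n_{\tau_n}|] \to 0$ for bounded stopping times $\tau_n$ and $\delta_n \downarrow 0$, a direct computation on the drift, diffusion and compensated jump parts. Together with the (trivial) tightness of the driving noises, the joint laws of $(X^n, W, \widetilde{N})$ are tight; extracting a subsequence and applying Skorokhod's representation theorem, one may assume almost sure convergence on a new probability space to a triplet $(X, \widetilde{W}, \widetilde{N}^\ast)$.

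The final step is to identify $X$ as a weak solution of \eqref{mckean:sde}. Uniform integrability from the $\beta$-moment bound implies $[X^n_t] \to [X_t]$ in $W_1$ for every $t$, so the non-linear measure argument entering the drift is correctly recovered in the limit. Continuity of $b, \sigma$ then allows to pass to the limit in the drift and diffusion parts by standard dominated-convergence arguments along Skorokhod-convergent sequences. The main obstacle is the passage to the limit in the jump integral $\int_0^t h(s, X^n_{s-})\, dZ_s$: since Skorokhod convergence of càdlàg paths does not yield pointwise convergence of left limits at jump times, one cannot pass to the limit directly inside the stochastic integral. The resolution is to work at the level of the martingale problem associated to $\widetilde{N}$, truncating large jumps using the $z^\beta$-integrability and controlling the small-jump residuals via the compensation together with $\int_0^\infty (z\wedge z^2)\,\nu(dz) < \infty$, continuity of $h$, and the uniform moment bound. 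This identifies $X$ as a weak solution of \eqref{mckean:sde}, concluding the proof.
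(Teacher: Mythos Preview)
Your strategy---Picard iteration on the law, uniform $\beta$-moment bounds, Aldous tightness, and limit identification through the martingale problem---is the same high-level plan the paper follows in its Appendix (Lemma \ref{weak:solution:and:moment:estimate}). Two technical choices differ and are worth noting. First, the paper truncates the L\'evy measure at level $m$ in the $m$-th iterate, driving $X^{(m+1)}$ by $Z^m$ with intensity $\mathbf 1_{|z|\le m}\nu(dz)$; this makes per-step weak existence a direct citation (Situ, Theorem 175) and, more importantly, yields the P-UT property for $(Z^m)_m$ via the trivial bound $\sup_m \mathbb E[\sup_{s\le t}|\Delta Z^m_s|]<\infty$. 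Your appeal to ``classical weak existence results'' for the untruncated equation at each step is not wrong, but it is precisely the kind of result the whole lemma is proving, so the truncation is a cleaner bootstrap. Second, for the limit identification the paper avoids Skorokhod representation and the delicate pointwise convergence of left limits altogether: it invokes Jacod--Shiryaev's stability of stochastic integrals under P-UT (their Theorem 6.22), which gives convergence in law of $\int_0^\cdot h(s,X^{(m+1)}_{s-})\,dZ^m_s$ directly. This is more economical than your proposed truncation-of-jumps/martingale-problem workaround, though the latter would also go through. Finally, the paper's uniform $\beta$-moment estimate is obtained not by It\^o/BDG on $(\epsilon+x^2)^{\beta/2}$ but by splitting off the large-jump compound Poisson part, conditioning on its jump times, and iterating a Gr\"onwall bound between consecutive jumps; this bypasses the awkwardness of BDG for the jump martingale when $\beta\in(1,2)$.
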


\begin{proof}
Weak existence follows from a compactness argument stated in a more general form in Lemma \ref{weak:existence:lemma} of the Appendix in Section \ref{weak:existence:lemma}.
\end{proof}

\begin{Lem}Suppose that assumption \ref{Ass_1} holds. Then, pathwise uniqueness holds for the SDE \eqref{mckean:sde} starting at time $0$ from the initial point $\xi$ with law $\mu \in \mathcal{P}_1(\R)$.
\end{Lem}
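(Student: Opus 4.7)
The plan is to adapt the Yamada--Watanabe approximation technique, in the spirit of Li and Mytnik \cite{Li:Mytnik}, to the McKean--Vlasov setting. Let $X^1, X^2$ be two strong solutions of \eqref{mckean:sde} defined on a common filtered probability space with the same driver $(W, \widetilde N)$ and the same initial value $\xi$, and set $Y_t := X^1_t - X^2_t$. I would introduce a family of mollifiers $\phi_n \in C^2(\R)$ that are even, convex, non-decreasing on $\R_+$, with $\phi_n(y) \nearrow |y|$, $|\phi_n'| \leq 1$, and whose second derivative $\phi_n''$ is supported in a shrinking interval $[a_{n+1}, a_n]$ around the origin. The sequence $(a_n)$ and the profile of $\phi_n''$ have to be calibrated so that, for some $\alpha \in (\alpha_\nu, 2]$ obeying $\alpha \eta > \alpha - 1$ (feasible thanks to $\eta > 1 - 1/\alpha_\nu$), one has $\phi_n''(y)\,|y|^{2\gamma} \to 0$ and $\phi_n''(y)\,|y|^{\alpha} \to 0$ uniformly in $y$ as $n \to \infty$.

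Applying It\^o's formula to $\phi_n(Y_t)$ and taking expectation produces three contributions. The drift term $\phi_n'(Y_s)(b(s,X^1_s,[X^1_s]) - b(s,X^2_s,[X^2_s]))$ is bounded by $[b]_L(|Y_s| + W_1([X^1_s],[X^2_s]))$ through a routine sign analysis combining assumption \ref{Ass_1}(ii) with $|\phi_n'|\leq 1$. The Brownian correction $\tfrac{1}{2}\phi_n''(Y_s)(\sigma(s,X^1_s) - \sigma(s,X^2_s))^2$ is at most $\tfrac{1}{2}[\sigma]_\gamma^2\,\phi_n''(Y_s)|Y_s|^{2\gamma}$ by assumption \ref{Ass_1}(iii), which vanishes as $n \to \infty$ by the calibration of $\phi_n$.

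The crux of the argument is the jump compensator
\begin{equation*}
J_n(s) := \int_0^\infty \bigl[\phi_n(Y_{s-} + \Delta_s z) - \phi_n(Y_{s-}) - \phi_n'(Y_{s-})\,\Delta_s z\bigr]\,\nu(dz), \qquad \Delta_s := h(s, X^1_{s-}) - h(s, X^2_{s-}).
\end{equation*}
I would split the integral at a fixed threshold $z_0$. For $z \leq z_0$, a second-order Taylor expansion together with $|\Delta_s| \leq [h]_\eta |Y_{s-}|^\eta$ and the calibration of $\phi_n''$ yields a contribution that vanishes as $n \to \infty$, via the tail asymptotics \eqref{left:right:tail:asymptotics} associated to our choice of $\alpha$. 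For $z > z_0$ the interplay between the spectral positivity of $Z$ and assumption \ref{Ass_1}(iv) is essential: the integrand is nonzero only when $Y_{s-}$ and $Y_{s-} + \Delta_s z$ have opposite signs, which forces $\sign(Y_{s-})\Delta_s < 0$ and hence $|\Delta_s| \leq [h]_L|Y_{s-}|$. This automatically confines the effective range of $z$ to $[1/[h]_L,\infty)$, where $\int z\,\nu(dz) < \infty$, producing a bound of the form $C\,|Y_{s-}|$.

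Letting $n \to \infty$ (justified by dominated convergence using the moment bounds obtained from the weak existence result) and noting that $W_1([X^1_s],[X^2_s]) \leq \e|Y_s|$, I arrive at $\e|Y_t| \leq C \int_0^t \e|Y_s|\,ds$, and Gronwall's lemma yields $\e|Y_t| = 0$ for every $t \geq 0$. The main obstacle I foresee is the joint calibration of the mollifier $\phi_n$: it must simultaneously annihilate the $\gamma$-Hölder Brownian contribution and the $\eta$-Hölder small-jump contribution against a L\'evy measure of singularity index $\alpha_\nu$, and the quantitative compatibility reduces precisely to the structural thresholds $\gamma \geq 1/2$ and $\eta > 1 - 1/\alpha_\nu$ appearing in assumption \ref{Ass_1}(iii). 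The secondary but crucial difficulty is the treatment of the large jumps, where the combination of the spectral positivity of $Z$ and the one-sided Lipschitz property \ref{Ass_1}(iv) of $h$ is indispensable in converting a potentially uncontrolled $|\Delta_s|$ into a linear-in-$|Y_{s-}|$ estimate amenable to Gronwall.
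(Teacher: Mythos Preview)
Your overall strategy (Yamada--Watanabe mollification, Gronwall via $W_1([X^1_s],[X^2_s])\le \e|Y_s|$) matches the paper, and your treatment of the drift and Brownian pieces is fine. The jump compensator, however, is handled incorrectly.

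The central flaw is your fixed-threshold split of $J_n(s)$ at $z_0$ together with the claim that for $z>z_0$ the integrand vanishes unless $Y_{s-}$ and $Y_{s-}+\Delta_s z$ have opposite signs. That claim is false: by convexity of $\phi_n$ the integrand is nonnegative, and it is strictly positive whenever $|Y_{s-}|$ lies in the region where $\phi_n$ is not affine (i.e.\ $|Y_{s-}|\le a_n$), even if $Y_{s-}$ and $\Delta_s$ have the \emph{same} sign. For instance with $Y_{s-}\in(0,a_n)$ and $\Delta_s>0$ one has $\phi_n'(Y_{s-})<1$, so $\phi_n(Y_{s-}+\Delta_s z)-\phi_n(Y_{s-})-\phi_n'(Y_{s-})\Delta_s z>0$ for every $z>0$. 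Hence assumption~\ref{Ass_1}(iv) cannot be invoked there, and your large-jump bound collapses. The small-jump Taylor step has the companion problem: $\phi_n''$ is evaluated at $Y_{s-}+\theta\Delta_s z$, not at $Y_{s-}$, so the calibration ``$\phi_n''(y)|y|^{\alpha}\to 0$'' does not apply directly; without a sign relation you have no monotonicity to transfer the argument back to $Y_{s-}$. Finally, with a \emph{fixed} $z_0$ the tail asymptotics~\eqref{left:right:tail:asymptotics} are irrelevant, since $\int_0^{z_0}z^2\nu(dz)$ is just a constant.

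The paper's decomposition is different and is what makes the argument work: one splits $J_n$ according to the sign of $Y_{s-}\Delta_s$, not according to a jump-size threshold. On $\{Y_{s-}\Delta_s>0\}$ the monotonicity $|Y_{s-}+\theta\Delta_s z|\ge|Y_{s-}|$ (since $z>0$) lets one control $\phi_{\delta,\varepsilon}''$ at the shifted point by $|Y_{s-}|^{-1}\mathbf 1_{(0,\varepsilon]}(|Y_{s-}|)$ (this is Lemma~\ref{key_lem_0}); one then splits the $\nu(dz)$-integral at the \emph{$\varepsilon$-dependent} level $u=\log(\delta)\,\varepsilon^{1-\eta}$, which is precisely where~\eqref{left:right:tail:asymptotics} enters and produces the vanishing term $\varepsilon^{1-\alpha(1-\eta)}/(\log\delta)^{\alpha-1}$ for $\alpha\in(\alpha_\nu,1/(1-\eta))$. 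On $\{Y_{s-}\Delta_s<0\}$ assumption~\ref{Ass_1}(iv) gives $|\Delta_s|\le[h]_L|Y_{s-}|$, and the $\nu(dz)$-integral is split at the \emph{state-dependent} level $\tfrac{|Y_{s-}|}{2|\Delta_s|}\wedge 1$, yielding a bound of the form $C(\varepsilon/\log\delta+|Y_{s-}|)$. The Gronwall step then proceeds exactly as you describe.
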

\begin{proof} 

Consider two solutions $X$ and $Y$ to \eqref{mckean:sde} with the same input data $(\xi, W, Z)$. We will use the notation $\Delta_t := X_t -Y_t$ and $\gamma_t := \sigma(t, X_t) - \sigma(t, Y_t)$, $\lambda_t := h(t, X_t) - h(t, Y_t)$. Let $\varepsilon \in (0,1)$ and $\delta \in (0,1)$. Using \eqref{phi3} and then applying It\^o's formula, we get
\begin{equation}
\label{first:step:yamada:watanabe:technique:pathwise:uniqueness}
|\Delta_t| \leq \varepsilon + \phi_{\delta, \varepsilon}(\Delta_t) = \varepsilon + {M}_t^{\delta,\varepsilon} +{I}_t^{\delta,\varepsilon} +{J}_t^{\delta,\varepsilon} +{K}_t^{\delta,\varepsilon},
\end{equation}

\noindent where we set
\begin{align*}
	{M}_t^{\delta,\varepsilon}
	:=& \int_{0}^{t} \phi_{\delta,\varepsilon}'(\Delta_{s}) \,  \gamma_s \, dW_{s}+\int_{0}^{t} \int_{0}^{\infty}\left\{\phi_{\delta,\varepsilon}(\Delta_{s-}+\lambda_{s-}z)-\phi_{\delta,\varepsilon}(\Delta_{s-})\right\}
	\widetilde{N}(ds,dz),\\
	{I}_t^{\delta,\varepsilon}
	:=&\int_{0}^{t} \phi_{\delta,\varepsilon}'(\Delta_{s})\{b(s, X_{s}, [X_s])-b(s, Y_s, [Y_s])\}ds,
	\quad\\
	{J}_t^{\delta,\varepsilon}
	:=& \frac{1}{2} \int_{0}^{t} \phi_{\delta,\varepsilon}''(\Delta_{s}) |\gamma_s|^2ds,\\
	{K}_t^{\delta,\varepsilon}
	:=&
	\int_{0}^{t} \int_{0}^{\infty}
	\Big\{
	\phi_{\delta,\varepsilon}(\Delta_{s-} + \lambda_{s-}z)-\phi_{\delta,\varepsilon}(\Delta_{s-}) - \lambda_{s-}z \phi_{\delta,\varepsilon}'(\Delta_{s-})
	\Big\}
	\nu(dz)ds.
\end{align*}

By a standard localization argument, we can define a sequence $(\tau_m)_{m\geq1}$ satisfying $\tau_m \uparrow \infty$ as $m\uparrow \infty$ and such that $(M^{\delta, \varepsilon}_{t\wedge \tau_m})_{t\geq0}$ is an $L^1(\mathbb{P})$-martingale. After taking expectation and performing the above computations, one can finally pass to the limit as $m\uparrow \infty$ using Fatou's lemma and the right-continuity of $(\Delta_t)_{t\geq0}$. Since this procedure is standard, for sake of simplicity, we omit it here and refer the reader to the proof of Theorem 2.2 in \cite{Li:Mytnik} for a similar argument. We now quantify the contribution of ${I}_t^{\delta,\varepsilon}$, ${J}_t^{\delta,\varepsilon}$ and ${K}_t^{\delta,\varepsilon}$.

In order to deal with ${I}_t^{\delta,\varepsilon}$ we make use of Assumption \ref{Ass_1} (iii)
\begin{align*}
\phi_{\delta,\varepsilon}'(\Delta_{s})\{b(s, X_{s}, [X_s])-b(s, Y_s, [Y_s])\} & = |\phi_{\delta,\varepsilon}'(\Delta_{s})| \sign(\Delta_{s}) \{b(s, X_{s}, [X_s])-b(s, Y_s, [Y_s])\} \\
& \leq [b]_L (|\Delta_s| + W_1([X_s], [Y_s])) 
\end{align*}

\noindent so that
$$
{I}_t^{\delta,\varepsilon} \leq [b]_L \int_0^t (|\Delta_s| + W_1([X_s], [Y_s])) \, ds.
$$

From Assumption \ref{Ass_1} (iii) and \eqref{phi4}, we directly get
$$
{J}_t^{\delta,\varepsilon} \leq  [\sigma]^2_\gamma\int_{0}^{t} \frac{|\Delta_s|^{2\gamma}}{|\Delta_s| \log(\delta)} \textbf{1}_{[\varepsilon/\delta, \varepsilon]}(|\Delta_s|) ds \leq  [\sigma]^2_\gamma \frac{\varepsilon^{2\gamma-1}}{\log(\delta)} t.
$$

In order to deal with ${K}_t^{\delta,\varepsilon}$, we write it as the sum of ${K}_t^{1, \delta,\varepsilon}$ and ${K}_t^{2, \delta,\varepsilon}$ with
\begin{align*}
{K}_t^{1, \delta,\varepsilon} :=& \int_{0}^{t} \int_{0}^{\infty} \Big\{ \phi_{\delta,\varepsilon}(\Delta_{s-} + \lambda_{s-}z)-\phi_{\delta,\varepsilon}(\Delta_{s-}) - \lambda_{s-}z \phi_{\delta,\varepsilon}'(\Delta_{s-}) \Big\} \, \textbf{1}_{\left\{ \Delta_{s-} \lambda_{s-} >0 \right\}}\nu(dz)ds, \\
{K}_t^{2, \delta,\varepsilon} :=& \int_{0}^{t} \int_{0}^{\infty} \Big\{ \phi_{\delta,\varepsilon}(\Delta_{s-} + \lambda_{s-}z)-\phi_{\delta,\varepsilon}(\Delta_{s-}) - \lambda_{s-}z \phi_{\delta,\varepsilon}'(\Delta_{s-}) \Big\} \textbf{1}_{\left\{ \Delta_{s-} \lambda_{s-} <0 \right\}} \nu(dz)ds.
\end{align*}
To deal with ${K}_t^{1, \delta,\varepsilon}$, we apply Lemma \ref{key_lem_0} with $y = \Delta_{s-}$, $x=\lambda_{s-}$ and $u>0$ to be chosen later. From Assumption \ref{Ass_1} (iii), we obtain
\begin{align}
\int_{0}^{\infty} &  \Big\{ \phi_{\delta,\varepsilon}(\Delta_{s-} + \lambda_{s-}z)-\phi_{\delta,\varepsilon}(\Delta_{s-}) - \lambda_{s-}z \phi_{\delta,\varepsilon}'(\Delta_{s-}) \Big\} \, \textbf{1}_{\left\{ \Delta_{s-} \lambda_{s-} >0 \right\}}\nu(dz) \nonumber \\
& \leq 2   \textbf{1}_{\left\{ \Delta_{s-} \lambda_{s-} >0 \right\}}  \1_{(0,\varepsilon]}(|\Delta_{s-}|)
	\left\{
	\frac{|\lambda_{s-}|^2}{|\Delta_{s-}|\log \delta}  \int_{0}^{u} z^2 \nu(dz)
	+ |\lambda_{s-}| \int_{u}^{\infty} z \nu(dz)
	\right\} \nonumber \\
	& \leq \frac{2 [h]^2_\eta}{\log(\delta)}  \varepsilon^{2 \eta-1}   \int_{0}^{u} z^2 \nu(dz) + [h]_\eta \varepsilon^{\eta} \int_{u}^{\infty} z \nu(dz). \label{intermediate:bound}
 \end{align}


To proceed, we aim at selecting a sufficiently small $u$, which equilibrates the two terms that appear in the above upper-estimate. Having in mind \eqref{left:right:tail:asymptotics}, we observe that the upper-bound \eqref{intermediate:bound} can be further bounded as follows
	\begin{align*}
	& \frac{2 [h]^2_\eta}{\log(\delta)}  \varepsilon^{2 \eta-1}   \int_{0}^{u} z^2 \nu(dz) + [h]_\eta \varepsilon^{\eta} \int_{u}^{\infty} z \nu(dz) \\
	& \leq \frac{2 [h]^2_\eta}{\log(\delta)}  \frac{\varepsilon^{2 \eta-1}}{u^{\alpha-2}} u^{\alpha-2}  \int_{0}^{u} z^2 \nu(dz) + [h]_\eta \frac{\varepsilon^{\eta}}{u^{\alpha-1}} u^{\alpha-1} \int_{u}^{\infty} z \nu(dz) \\
	&\leq \bigg[\frac{2 [h]^2_\eta}{\log(\delta)}  \frac{\varepsilon^{2 \eta-1}}{u^{\alpha-2}}  + [h]_\eta \frac{\varepsilon^{\eta}}{u^{\alpha-1}}\bigg] (I^{\alpha}_{\varepsilon}+J^{\alpha}_\varepsilon)
	= (2[h]^2_\eta+[h]_\eta)\frac{\varepsilon^{1-\alpha(1-\eta)}}{\log(\delta)^{\alpha-1}} (I^{\alpha}_{\varepsilon}+J^{\alpha}_\varepsilon)\notag
	\end{align*}

\noindent where in the last equality we have chosen $u$ so that $\frac{\varepsilon^{2\eta-1}}{\log(\delta)u^{\alpha-2}} = \frac{\varepsilon^\eta}{u^{\alpha-1}}$, that is, $u=\log(\delta) \varepsilon^{1-\eta}$ and introduced the two quantities 
\begin{align*}
I^{\alpha}_\varepsilon := (\log(\delta) \varepsilon^{1-\eta})^{\alpha-2}  \int_{0}^{\log(\delta) \varepsilon^{1-\eta}} z^2 \nu(dz) \quad \mathrm{and}\quad J^{\alpha}_\varepsilon := (\log(\delta) \varepsilon^{1-\eta})^{\alpha-1} \int_{\log(\delta)\varepsilon^{1-\eta}}^{\infty} z \nu(dz)
\end{align*}
for a fixed $\alpha \in (\alpha_\nu, 1/(1-\eta))$. Note that this choice of $\alpha$ is admissible since $\eta \in (1-1/\alpha_\nu,1]$. If $\eta < 1$, from \eqref{left:right:tail:asymptotics}, there exists $\varepsilon_0:=\varepsilon_0(\alpha, \delta)>0$ such that $I^{\alpha}_{\varepsilon}+J^{\alpha}_\varepsilon \leq1$ for any $\varepsilon \in (0,\varepsilon_0)$. If $\eta=1$, we select $\delta=2$ and from Assumption \ref{Ass_1}, one may bound the quantity $I^{\alpha}_{\varepsilon}+J^{\alpha}_\varepsilon$ by $2 \int_0^\infty z \wedge z^2 \nu(dz)$.  From the above computation, for any $\alpha \in (\alpha_\nu, 1/(1-\eta))$, there exists a positive $\varepsilon_0$ such that for any $\varepsilon \in (0, \varepsilon_0)$
$$
{K}_t^{1, \delta,\varepsilon}  \leq C\frac{\varepsilon^{1-\alpha(1-\eta)}}{\log(\delta)^{\alpha-1}} .
$$

In order to deal with $K^{2, \delta, \varepsilon}_t$, we remark that on the set $\left\{ \Delta_{s-} \lambda_{s-} < 0\right\}$, from assumption \ref{Ass_1} (iv), one has $0 < -\sign(\Delta_{s-}) \lambda_{s-} = |\lambda_{s-}| \leq [h]_L |\Delta_{s-}|$ so that separating the $\nu(dz)$-integral into the two domains $  z \in (0, \frac{|\Delta_{s-}|}{2|\lambda_{s-}|}\wedge 1]$ and $z \in (\frac{|\Delta_{s-}|}{2|\lambda_{s-}|}\wedge 1, \infty)$, we get
$$
\forall z \in \bigg(0, \frac{|\Delta_{s-}|}{2|\lambda_{s-}|}\wedge 1\bigg], \quad \phi_{\delta,\varepsilon}(\Delta_{s-} + \lambda_{s-}z)-\phi_{\delta,\varepsilon}(\Delta_{s-}) - \lambda_{s-}z \phi_{\delta,\varepsilon}'(\Delta_{s-})  \leq C  \1_{(0,\varepsilon]}(|\Delta_{s-}|)	\frac{|\lambda_{s-}|^2}{|\Delta_{s-}|\log \delta}z^2
$$

\noindent and
$$
\forall z \in \bigg(\frac{|\Delta_{s-}|}{2|\lambda_{s-}|}\wedge 1, \infty\bigg), \quad |\phi_{\delta,\varepsilon}(\Delta_{s-} + \lambda_{s-}z)-\phi_{\delta,\varepsilon}(\Delta_{s-})| + |\lambda_{s-}z \phi_{\delta,\varepsilon}'(\Delta_{s-})|  \leq C |\lambda_{s-}| z\leq C |\Delta_{s-}| z.
$$

Combining the two previous bounds, we obtain
\begin{align}
\int_{0}^{\infty} &  \Big\{ \phi_{\delta,\varepsilon}(\Delta_{s-} + \lambda_{s-}z)-\phi_{\delta,\varepsilon}(\Delta_{s-}) - \lambda_{s-}z \phi_{\delta,\varepsilon}'(\Delta_{s-}) \Big\} \, \textbf{1}_{\left\{ \Delta_{s-} \lambda_{s-} <0 \right\}}\nu(dz) \nonumber \\
& \leq C   \1_{(0,\varepsilon]}(|\Delta_{s-}|)	\frac{|\lambda_{s-}|^2}{|\Delta_{s-}|\log \delta} \int_0^{ \frac{|\Delta_{s-}|}{2|\lambda_{s-}|}\wedge 1} z^2 \nu(dz) +C |\Delta_{s-}|  \int_{ \frac{|\Delta_{s-}|}{2|\lambda_{s-}|}\wedge 1}^{\infty} z \nu(dz)  \nonumber\\
& \leq C  \left\{ \frac{\varepsilon}{\log(\delta)}  +  |\Delta_{s-}| \right\}. \label{second:part:Kt:1}
 \end{align}

Hence, 
$$
K^{2, \delta, \varepsilon}_t \leq C  \left\{ \frac{\varepsilon}{\log(\delta)} t  +  \int_0^t |\Delta_{s}| \,ds \right\}.
$$

Taking expectation in \eqref{first:step:yamada:watanabe:technique:pathwise:uniqueness} and gathering the above estimates we obtain
\begin{equation}
\label{last:step:before:gronwall:pathwise:uniqueness:mckeanvlasov}
\mathbb{E}[|\Delta_t|] \leq C \bigg[\varepsilon \bigg(1+\frac{1}{\log(\delta)}\bigg)+   \int_0^t (\mathbb{E}[|\Delta_s|] + W_1([X_s], [Y_s])) \, ds +  \frac{\varepsilon^{2\gamma-1}}{\log(\delta)}  + \frac{\varepsilon^{1-\alpha(1-\eta)}}{\log(\delta)^{\alpha-1}} \bigg]
\end{equation}

\noindent for any $\alpha \in (\alpha_\nu, 1/(1-\eta))$ and for any $\varepsilon \in (0, \varepsilon_0)$. Hence, passing to the limit as $\varepsilon \downarrow 0$ and then using Gronwall's lemma yield
$$
\mathbb{E}[|\Delta_t|] \leq C \int_0^t W_1([X_s], [Y_s])) \, ds
$$

By the very definition of the Wasserstein metric of order one, one has $W_1([X_t], [Y_t]) \leq \mathbb{E}[|\Delta_t|] $ which combined with the previous inequality allows us to conclude the proof.
\end{proof}

Combining the two previous lemmas with Yamada-Watanabe theorem, we thus obtain our first main result.

\begin{Thm}\label{strong:existence:uniqueness} Suppose that assumption \ref{Ass_1} holds and that $\int_{\R} |x|^\beta \mu(dx) + \int_{ z \geq1} z^\beta \nu(dz) < \infty$ for some $\beta>1$. Then, there exists a unique strong solution to the SDE \eqref{mckean:sde} starting at time $0$ from the initial point $\xi$ with law $\mu$.

\end{Thm}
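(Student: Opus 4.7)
The strategy is the classical Yamada-Watanabe argument, adapted to the McKean-Vlasov setting via the fact that pathwise uniqueness pins down the one-dimensional marginals. The first lemma of this subsection provides a weak solution $(X, W, \widetilde{N})$ on some filtered probability space, whose existence is ensured by the integrability assumption on $\mu$ and $\nu$ together with Assumption \ref{Ass_1} (i). Let $\mu_t := [X_t]$ denote its marginal laws. The second lemma shows that any two weak solutions sharing the same input data $(\xi, W, \widetilde{N})$ coincide almost surely; in particular, the family $(\mu_t)_{t \geq 0}$ is uniquely determined and does not depend on the chosen weak solution.

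Once $(\mu_t)_{t \geq 0}$ is fixed, the mean-field SDE \eqref{mckean:sde} may be regarded as a standard (non-linear only through time) jump-diffusion SDE with time-inhomogeneous coefficients
\[
\widetilde{b}(t, x) := b(t, x, \mu_t), \qquad \sigma(t, x), \qquad h(t, x),
\]
driven by $W$ and $\widetilde{N}$. Weak existence for this frozen equation is immediate (namely $X$ itself). Pathwise uniqueness for the frozen equation follows by re-running the Yamada-Watanabe computation of the previous lemma, this time with $W_1([X_s],[Y_s])$ replaced by zero since both solutions to the frozen equation must have marginals $\mu_s$; alternatively, pathwise uniqueness for the frozen equation is an immediate consequence of pathwise uniqueness for \eqref{mckean:sde}, since any two solutions to the frozen equation with the same input data are a fortiori solutions to the McKean-Vlasov equation with identical marginals.

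With weak existence and pathwise uniqueness in hand for the frozen linear SDE, the Yamada-Watanabe theorem for jump-diffusion equations (see, e.g., Kurtz's general formulation) yields the existence of a measurable functional $F$ such that $X = F(\xi, W, \widetilde{N})$ almost surely. This produces a strong solution to \eqref{mckean:sde}, which is adapted to the augmented filtration $(\mathcal{G}_t)_{t \geq 0}$ generated by $\xi$, $W$ and $\widetilde{N}$. Strong uniqueness is then nothing but the pathwise uniqueness statement of the second lemma. The only subtle point is ensuring that the standard Yamada-Watanabe theorem applies to the jump-diffusion case with the compensated integral against $\widetilde{N}$; this is classical and follows, for instance, from the formulation in Kurtz, so we omit the details.
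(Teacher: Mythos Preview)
Your proof is correct and follows the same route as the paper, which simply states that the theorem follows by ``combining the two previous lemmas with the Yamada-Watanabe theorem''; you have merely spelled out the freezing-of-marginals step that the paper alludes to earlier when it remarks that pathwise uniqueness determines the marginals and hence reduces \eqref{mckean:sde} to a linear time-inhomogeneous SDE.

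One small caveat: your \emph{alternative} justification for pathwise uniqueness of the frozen equation (``any two solutions to the frozen equation with the same input data are a fortiori solutions to the McKean-Vlasov equation'') is circular. A solution $Y$ of the frozen SDE with drift $\widetilde{b}(t,x)=b(t,x,\mu_t)$ is a solution of \eqref{mckean:sde} only if $[Y_t]=\mu_t$, which is precisely what you are trying to establish. Your first justification---re-running the estimate of the pathwise uniqueness lemma with $W_1(\mu_s,\mu_s)=0$---is the correct one, and it goes through verbatim since Assumption \ref{Ass_1}(ii) gives $\mathrm{sign}(X_s-Y_s)\big(b(s,X_s,\mu_s)-b(s,Y_s,\mu_s)\big)\leq [b]_L|X_s-Y_s|$.
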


\begin{Rem} We recall that in the case of $\alpha$-stable like L\'evy measure, that is, $\nu(dz) = \textbf{1}_{(0,\infty)}(z)g(z)/ z^{1+\alpha} \, dz$, with $\alpha \in (1,2]$, $g$ being a non-negative bounded and continuous function on $\R_+$, one has $\alpha_\nu = \alpha$. In this case, one may slightly weaken assumption \ref{Ass_1} (iii) by letting $\eta \in [1-1/\alpha_\nu,1]$, if $\alpha \in (1,2]$. Indeed, if $\eta = 1-1/\alpha_\nu$ with $\alpha_\nu\in (1,2]$, then the last term appearing in the right-hand side of \eqref{last:step:before:gronwall:pathwise:uniqueness:mckeanvlasov} becomes $\log(\delta)^{1-\alpha_\nu}$ so that, after passing to the limit as $\varepsilon\downarrow 0$, one may conclude by letting $\delta \uparrow \infty$.
\end{Rem}

\subsection{Approximation of the mean-field limit dynamics by a system of interacting particles}

In this section we consider the approximation of the dynamics \eqref{mckean:sde} by the corresponding system of particles. For a positive integer $N$, let us introduce a sequence $(\xi^{i}, W^{i}, Z^{i})_{1\leq i \leq N}$ of i.i.d. copies of $(\xi, W, Z)$ which is assumed to be defined on the same probability space $(\Omega, \mathcal{F}, \mathbb{P})$ for sake of simplicity. The system of interacting particles $\left\{X^{i, N}_t, 1\leq i\leq N, t\geq0 \right\}$ is defined by the following $N$-dimensional SDE with dynamics
\begin{equation}
\label{particle:system}
X^{i, N}_t = \xi^{i} + \int_0^t b(s, X^{i, N}_{s}, \mu^{N}_s) \, ds + \int_0^t \sigma(s, X^{i,N}_{s}) \, dW^{i}_s +  \int_0^t   h(s, X^{i,N}_{s-}) \, dZ^{i}_s, \quad 1\leq i \leq N
\end{equation}

\noindent where $\mu^{N}_t := N^{-1}\sum_{i=1}^{N} \delta_{X^{i, N}_t}$, $t\geq0$, is the empirical measure associated to \eqref{particle:system} taken at time $t$.

Let us point out that even if the above system of particles appears as the natural candidate for the approximation of the mean-field SDE \eqref{mckean:sde}, at the moment it is not clear if it is well-defined. Our first aim here is to investigate the well-posedness in the strong sense of \eqref{particle:system}. We then provide an error bound for the $L^{1}$-distance between $X^{i, N}$ and the dynamics $\bar{X}^{i, N}$ constructed as i.i.d. copies of the limit equation \eqref{mckean:sde} with the same input $(\xi^{i}, W^{i}, Z^{i})_{1\leq i \leq N}$ as the system \eqref{particle:system}, namely
\begin{equation}
\label{coupling:particle:system}
\bar{X}^{i, N}_t = \xi^{i} + \int_0^t b(s, \bar{X}^{i, N}_{s}, \mu^{i}_s) \, ds + \int_0^t \sigma(s, \bar{X}^{i,N}_{s}) \, dW^{i}_s + \int_0^t h(s, \bar{X}^{i, N}_{s-}) \, dZ^{i}_s, \quad 1\leq i \leq N
\end{equation}

\noindent where $\mu^{i}_t = [\bar{X}^{i,N}_t]$, $t\geq0$. Note that by weak uniqueness of solutions to the SDE \eqref{coupling:particle:system} and interchangeability in law of the $\xi^{i}$ satisfying $[\xi^{i}]=\mu$ one has $\mu^{i}_t = \mu_t$, for any $t\geq0$ and for any $i \in \left\{ 1, \cdots, N\right\}$, so that the system of particles $(\bar{X}^{i, N})_{1\leq i \leq N}$ indeed corresponds to i.i.d. copies of the SDE \eqref{mckean:sde}.  

\begin{Thm}\label{prop:wellposedness:particle:system}
Under assumption \ref{Ass_1}, the SDE \eqref{particle:system} admits a unique strong solution for any initial distribution $\mu \in \mathcal{P}_1(\mathbb{R})$. 

Assume additionally that $\int_{z\geq1} z^{\beta} \, \nu(dz) < \infty$ for some $\beta >1$, $\beta\neq 2$, and that the initial distribution $\mu \in \mathcal{P}_1(\R)$ of the SDE \eqref{mckean:sde} has a finite $\beta$-moment, that is, $M_\beta(\mu)=\int_{\R}|x|^\beta \mu(dx) < \infty$. Then, for any positive integer $N$, for any $T>0$, one has
\begin{equation}
\max_{1\leq i\leq N} \sup_{0\leq t \leq T} \mathbb{E}[|X^{i, N}_t- \bar{X}^{i,N}_t|] + \sup_{0\leq t \leq T}\mathbb{E}[W_1(\mu^{N}_t , \mu_t)] \leq C(N^{-1/2} + N^{-(\beta-1)/\beta}) \label{convergence:rate:particle:system}
\end{equation}

\noindent for some positive constant $C$ depending only on $T$, $\beta$ and $M_\beta(\mu)$. 
\end{Thm}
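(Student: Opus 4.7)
For the well-posedness claim, I would establish strong existence and pathwise uniqueness of the $N$-dimensional system \eqref{particle:system} by adapting the argument of Theorem \ref{strong:existence:uniqueness}. Weak existence follows from the general compactness result Lemma \ref{weak:solution:and:moment:estimate} applied to the $N$-dimensional jump diffusion whose coefficients remain continuous with linear growth, using the elementary bound $W_1(\mu^N_s, \delta_0) \le N^{-1}\sum_j |X^{j,N}_s|$ to control the measure dependence of $b$. For pathwise uniqueness, given two solutions $(X^{i,N})_i$ and $(Y^{i,N})_i$, set $\Delta^i_t := X^{i,N}_t - Y^{i,N}_t$ and apply the Yamada--Watanabe approximation to each $|\Delta^i_t|$ exactly as in Lemma 2.3. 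The jump and diffusion contributions are treated identically, since $\sigma$ and $h$ do not depend on the measure argument, while the drift contribution is controlled by Assumption \ref{Ass_1}(ii) together with $W_1(\mu^N_s, \nu^N_s) \le N^{-1}\sum_j |\Delta^j_s|$, where $\nu^N_s$ denotes the empirical measure of $(Y^{i,N})$. Summing over $i$, passing to the limit $\varepsilon \downarrow 0$, and applying Gronwall's lemma to $t \mapsto N^{-1}\sum_i \mathbb{E}[|\Delta^i_t|]$ yields $\Delta^i \equiv 0$.

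For the quantitative bound \eqref{convergence:rate:particle:system}, I couple $(X^{i,N})$ with the i.i.d. copies $(\bar{X}^{i,N})$ driven by the same noise $(\xi^i, W^i, Z^i)$ and set $\Delta^{i,N}_t := X^{i,N}_t - \bar{X}^{i,N}_t$. Repeating componentwise the Yamada--Watanabe computation of Lemma 2.3 produces the analogue of \eqref{last:step:before:gronwall:pathwise:uniqueness:mckeanvlasov}, namely
\begin{equation*}
\mathbb{E}[|\Delta^{i,N}_t|] \le C \int_0^t \big( \mathbb{E}[|\Delta^{i,N}_s|] + \mathbb{E}[W_1(\mu^N_s, \mu_s)] \big)\, ds,
\end{equation*}
where $\mu_s := [\bar{X}^{i,N}_s]$ is independent of $i$. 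The triangle inequality together with the elementary bound gives $\mathbb{E}[W_1(\mu^N_s, \mu_s)] \le N^{-1} \sum_i \mathbb{E}[|\Delta^{i,N}_s|] + \mathbb{E}[W_1(\bar{\mu}^N_s, \mu_s)]$, with $\bar{\mu}^N_s := N^{-1}\sum_i \delta_{\bar{X}^{i,N}_s}$. The last term, the expected $W_1$-distance between the empirical measure of $N$ i.i.d. samples from $\mu_s$ and $\mu_s$ itself, is estimated by the classical Fournier--Guillin bound, which in dimension one and for the $W_1$-metric yields the rate $O(N^{-1/2} + N^{-(\beta-1)/\beta})$ uniformly in $s \in [0,T]$ (the restriction $\beta \neq 2$ in the statement avoids the borderline logarithmic correction), provided $\sup_{s \le T} M_\beta(\mu_s) < \infty$.

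The required uniform $\beta$-moment estimate on $\mu_s$ is itself a consequence of Lemma \ref{weak:solution:and:moment:estimate}, using the linear growth from Assumption \ref{Ass_1}(i), the hypothesis $M_\beta(\mu) < \infty$, and the integrability condition $\int_{z\ge1} z^\beta \nu(dz) < \infty$. By exchangeability, $\mathbb{E}[|\Delta^{i,N}_t|]$ does not depend on $i$, so assembling the three ingredients gives
\begin{equation*}
\mathbb{E}[|\Delta^{1,N}_t|] \le C \int_0^t \mathbb{E}[|\Delta^{1,N}_s|]\, ds + C_T \big( N^{-1/2} + N^{-(\beta-1)/\beta} \big),
\end{equation*}
and Gronwall's lemma delivers \eqref{convergence:rate:particle:system}. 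The main technical point is not the Yamada--Watanabe iteration, whose estimates on ${K}_t^{1,\delta,\varepsilon}$ and ${K}_t^{2,\delta,\varepsilon}$ carry over verbatim from Lemma 2.3 because $h$ is purely state-dependent; it is rather the invocation of the sharp one-dimensional Fournier--Guillin $W_1$-rate together with the propagation of the uniform $\beta$-moment bound on the marginals of \eqref{mckean:sde}, which is precisely what the extra integrability hypothesis on $\mu$ and $\nu$ encodes.
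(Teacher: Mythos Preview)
Your proposal is correct and follows essentially the same approach as the paper: weak existence via the compactness argument of Lemma~\ref{weak:solution:and:moment:estimate} (the paper redoes this explicitly ``in the spirit of'' that lemma since the particle system is an $N$-dimensional classical SDE rather than a one-dimensional McKean--Vlasov SDE, but the adaptation is straightforward), pathwise uniqueness by summing the componentwise Yamada--Watanabe estimates and using $W_1(\mu^N_s,\nu^N_s)\le N^{-1}\sum_j|\Delta^j_s|$, and the quantitative rate via the triangle inequality $W_1(\mu^N_s,\mu_s)\le W_1(\mu^N_s,\bar\mu^N_s)+W_1(\bar\mu^N_s,\mu_s)$, exchangeability, Gronwall, and finally the Fournier--Guillin bound combined with the uniform $\beta$-moment estimate from Lemma~\ref{weak:solution:and:moment:estimate}(i).
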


\begin{proof}
\noindent\emph{Step 1: Weak existence.} 
First let us note that under assumption \ref{Ass_1} the maps $\mathbb{R}_+\times \mathbb{R}^N \ni (t, x) \mapsto b(t, x_i, \mu^{N}_{x}), \, \sigma(t, x_i), \, h(t, x_i)$, with $\mu^{N}_x := N^{-1} \sum_{i=1}^{N} \delta_{x_i}$, are continuous with at most linear growth for any $1\leq i\leq N$. Indeed, for any fixed $N\geq1$, if $(t^n, x^{n})_{n\geq1}$ converges to $(t, x) \in \R_+\times \R^N$, then $\lim_{n} W_1(\mu^{N}_{x^n}, \mu^N_x) \leq \lim_{n} N^{-1} \sum_{i=1}^{N} |x^{n}_i - x_i| = 0$. Therefore, the triple $(b(t_n, x^{n}_i, \mu^{N}_{x^n}),  \sigma(t_n, x^{n}_i), h(t_n, x^{n}_i))_{n\geq1}$ converges to the triple $(b(t, x_i, \mu^{N}_x) ,\sigma(t, x_i), h(t, x_i))$. In the spirit of step 2 in the proof of Lemma \ref{weak:solution:and:moment:estimate}, we then introduce the sequence $(X^{(m)}= (X^{i, (m)})_{1\leq i \leq N})_{m\geq1}$ of SDEs with dynamics

 \begin{equation}
 X^{i, (m+1)}_t  = \xi + \int_0^t b(s, X^{i, (m)}_{s}, \mu^{N}_{X^{(m)}_s}) \, ds + \int_0^t \sigma(s, X^{i, (m)}_{s}) \, dW_s + \int_0^t  h(s, X^{i, (m)}_{s-}) \, dZ^{i, m}_{s}, \quad 1\leq i\leq N \label{approximation:particle:system:sde}
 \end{equation} 
\noindent where $Z^{i, m}_t = \int_0^t \int_{\R\backslash{\left\{0\right\}}} z \widetilde{N}^{i}_{m}(ds,dz)$, $\widetilde{N}^{i}_m$, $1\leq i \leq N$, being $N$-independent compensated Poisson random measures on $[0,\infty)\times \R\backslash{\left\{0\right\}}$ with intensity measure $dt \textbf{1}_{|z|\leq m} \nu(dz)$. Following similar lines of reasonings as those employed in the first step of Lemma \ref{weak:solution:and:moment:estimate}, one may prove that for any weak solution to \eqref{approximation:particle:system:sde} with starting distribution $\mu \in \mathcal{P}_1(\R)$, for any $T>0$, one has
\begin{equation*}
\sup_{m\geq1} \max_{1\leq i  \leq N}\mathbb{E}[\sup_{0\leq t \leq T}|X^{i, (m)}_t|] < \infty.
\end{equation*}

Similarly, relabelling the indices if necessary, one may assume that the sequence $(X^{i, (m)}, Z^{i, m}, 1\leq i \leq N)_{m\geq1}$ converges in law to $(X^{i}, Z^{i}, 1\leq i \leq N)$ in $\mathcal{D}([0,\infty), \R^{N}\times \R^{N})$. The sequence $(Z^{i, (m)}, 1\leq i \leq N)_{m\geq1}$ also satisfies the P-UT property since
$$
\max_{1\leq i \leq N} \mathbb{E}[\sup_{s \in [0,t]}|\Delta Z^{i, m}_s|] \leq 1 + C t
$$
\noindent with $C:= \int_{z \geq1} z \nu(dz) < \infty$. Finally, in a completely analogous manner as in step 2 of the proof of Lemma \ref{weak:solution:and:moment:estimate}, since the maps $[0,\infty) \times \R^N \ni(t, x) \mapsto b(t, x_i, \mu^N_x), \, \sigma(t, x_i),\, h(t, x_i)$ are continuous, from the continuous mapping theorem, the family 
$$
(X^{i, (m+1)}_t, b(t, X^{i, (m+1)}_t, \mu^{N}_{X^{(m)}_t}), \sigma(t, X^{i, (m+1)}_t), h(t, X^{i, (m+1)}_t), W^i_t, Z^{i, m}_t, 1\leq i \leq N)_{t\geq0}, \, m\geq0
$$ 
\noindent converges in law to $(X^{i}_t, b(t, X^{i}_t, \mu^{N}_{X_t}), \sigma(t, X^{i}_t), h(t, X^{i}_t), W^{i}_t, Z^{i}_t, 1\leq i \leq N)_{t\geq0}$ in $\mathcal{D}([0,\infty), (\R^{6})^N)$. Thus, passing to the limit in the dynamics \eqref{approximation:particle:system:sde}, we deduce that there exists a weak solution to the SDE \eqref{particle:system}. It thus suffices to prove pathwise uniqueness.\\

\noindent\emph{Step 2: Pathwise uniqueness.} 
Let us consider two weak solutions $X^{N}:=(X^{i, N}, 1\leq i \leq N)$ and $(Y^{i, N}, 1\leq i \leq N)$ of \eqref{particle:system} with the same input $(\xi^{i}, W^{i}, Z^{i})_{1\leq i\leq N}$. Following exactly the same lines of reasonings as those employed in the proof of Theorem \ref{strong:existence:uniqueness}, introducing similarly the quantities $\Delta^{i}_t := X^{i, N}_t - Y^{i, N}_t$ for $i=1, \cdots, N$ and $\nu^{N}_t = N^{-1} \sum_{i=1}^{N} \delta_{Y^{i, N}_t}$, instead of \eqref{last:step:before:gronwall:pathwise:uniqueness:mckeanvlasov} we get
\begin{align*}
\mathbb{E}[|\Delta^{i}_t|] \leq C\bigg[\varepsilon +   \int_0^t \bigg(\mathbb{E}[|\Delta^{i}_s|] + \mathbb{E}\left[W_1(\mu^{N}_s, \nu^N_s)\right]\bigg) \, ds +  \varepsilon^{2\gamma-1} + \varepsilon^{1-\alpha(1-\eta)} \bigg]
\end{align*}

\noindent for any $\alpha \in (1-1/\alpha_\nu,1/(1-\eta))$.
Passing to the limit as $\varepsilon \downarrow 0$, then summing over $i$ and finally using the standard inequality $W_1(\mu^{N}_t, \nu^N_t) \leq N^{-1}\sum_{i=1}^{N} |\Delta^{i}_t|$, we get
$$
\frac1N \sum_{i=1}^{N} \mathbb{E}[|\Delta^{i}_t|] \leq C\int_0^t \frac1N\sum_{i=1}^{N} \mathbb{E}[|\Delta^{i}_s|] \, ds
$$
 \noindent so that, by Gronwall's lemma, we deduce that $\mathbb{E}[W_1(\mu^{N}_t, \nu^N_t)]=0$ for all $t\geq 0$. Hence, pathwise uniqueness holds for the SDE \eqref{particle:system} so that, by the Yamada-Watanabe theorem, it has a unique strong solution.

\bigskip

\noindent\emph{Step 3: Propagation of chaos.} 
In order to prove \eqref{convergence:rate:particle:system}, we again follow the lines of reasoning of the proof of Theorem \ref{strong:existence:uniqueness}. Namely, introducing the quantity $\bar{\Delta}^{i}_t := X^{i, N}_t - \bar{X}^{i, N}_t$ for $i=1, \cdots, N$ and $\bar{\mu}^{N}_t = N^{-1} \sum_{i=1}^{N} \delta_{\bar{X}^{i, N}_t}$, instead of \eqref{last:step:before:gronwall:pathwise:uniqueness:mckeanvlasov} we get
\begin{equation*}
\mathbb{E}[|\bar{\Delta}^{i}_t|] \leq C \bigg[\varepsilon +   \int_0^t \bigg(\mathbb{E}[|\bar{\Delta}^{i}_s|] + \mathbb{E}\left[W_1(\mu^{N}_s, \mu_s)\right]\bigg) \, ds +  \varepsilon^{2\gamma-1} + \varepsilon^{1-\alpha(1-\eta)}\bigg].
\end{equation*}

By the triangle inequality 
\begin{equation}
W_1(\mu^{N}_s, \mu_s) \leq W_1(\mu^{N}_s, \bar{\mu}^{N}_s) + W_1(\bar{\mu}^{N}_s, \mu_s) \leq \frac{1}{N}\sum_{i=1}^{N} | \bar{\Delta}^{i}_s| + W_1(\bar{\mu}^{N}_s, \mu_s) \label{estimate:triangular:inequality:particle:system}
\end{equation}

\noindent and noticing that the processes $((X^{i,N}, \bar{X}^{i,N}))_{1\leq i \leq N}$ are identically distributed yield 
 \begin{equation*}
\mathbb{E}[|\bar{\Delta}^{1}_t|] \leq C \bigg[\varepsilon +   \int_0^t \bigg(\mathbb{E}[ |\bar{\Delta}^{1}_s|] + \mathbb{E}\left[W_1(\bar{\mu}^{N}_s, \mu_s)\right]\bigg) \, ds + \varepsilon^{2\gamma-1}  + \varepsilon^{1-\alpha(1-\eta)}\bigg].
\end{equation*}

Applying Gronwall's lemma and then letting $\varepsilon \downarrow 0$ we finally get
\begin{equation}
\mathbb{E}[|\bar{\Delta}^{1}_t|] \leq C \int_0^t \mathbb{E}\left[W_1(\bar{\mu}^{N}_s, \mu_s)\right] \, ds. \label{estimate:gronwall:lemma:particle:system}
\end{equation}

We now discuss the rate of convergence stated in \eqref{convergence:rate:particle:system} under the additional assumption that the L\'evy measure satisfies $\int_{z\geq1}z^\beta \nu(dz) < \infty$ and that the initial distribution $\mu$ has a finite moment of order $\beta$, for some $\beta>1$. Now, from Lemma \ref{weak:solution:and:moment:estimate} (i), it holds
$$
\max_{1\leq i \leq N}\mathbb{E}[\sup_{0\leq t\leq T} |\bar{X}^{i,N}_t|^\beta] < \infty.
$$

It then follows from Theorem 4 in Fournier and Guillin \cite{Fournier:Guillin} that there exists some positive constant $C$ only depending on $\beta$ such that
$$
\mathbb{E}[W_1(\bar{\mu}^N_t, \mu_t) ] \leq C \mathbb{E}[|X_t|^q]^{1/q} (N^{-1/2} + N^{-(\beta-1)/\beta}), \quad \beta\neq 2.
$$ 

Taking the supremum over $t\in[0,T]$ and plugging the above bound into \eqref{estimate:gronwall:lemma:particle:system} we firstly get the desired upper-bound for the quantity $\max_{1\leq i\leq N} \sup_{0\leq t\leq T} \mathbb{E}[|\bar{\Delta}^{i}_t|]$. Then, from \eqref{estimate:triangular:inequality:particle:system}, we derive the similar estimate for the quantity $ \sup_{0\leq t \leq T}\mathbb{E}[W_1(\mu^{N}_t , \mu_t)]$. The proof is now complete. 
\end{proof}

\subsection{Euler-Maruyama time-dscretization scheme for the system of particles}
In the previous section, we established a strong rate of convergence of propagation of chaos for the system of particles associated to the McKean-Vlasov SDE \eqref{mckean:sde}. From a numerical perspective, the system of particles is not tractable and one usually has to approximate the dynamics of the system \eqref{particle:system} by considering the so-called Euler-Maruayama time discretization scheme that we now introduce and analyze. To facilitate our computations, we here only consider the time-homogeneous setting and we claim that, given appropriate regularity assumptions on the maps $t \mapsto b(t, x, \mu)$, $t \mapsto \sigma(t, x)$ and $t \mapsto h(t, x)$, the time non-homogeneous case could be dealt with from similar lines of reasonnings. 

For a given finite time horizon $T>0$ and a positive integer $n$, let us introduce the equally spaced time grid on the interval $[0,T]$, given by $0= t_0 < t_1 < t_2< \dots< t_n = T$, where $t_k = k \delta$ and $\delta = T/n$. We define $\eta(t) = t_k$ for $t \in (t_k, t_{k+1}]$, for $k=0, \cdots, n-1$, $\eta(0) = -\infty$ and $\eta(t) = T$ for $t> T$. The Euler-Maruyama approximation scheme $(X^{n, i, N})_{t\in [0,T]}$, $i=1, \dots, N$, of the system of particles \eqref{particle:system} is given by the following dynamics
\begin{equation}
X^{n,i,N}_t  = \xi^{i} + \int_0^t b(X^{n, i, N}_{\eta(s)}, \mu^{n,N}_{\eta(s)}) ds + \int_0^t \sigma(X_{\eta(s)}^{n, i, N}) \, dW^{i}_s + \int_0^t h(X_{\eta(s)}^{n,i,N}) \, dZ^{i}_s \label{euler:approximation:scheme}
\end{equation}

\noindent where we set $\mu^{n, N}_s := N^{-1}\sum_{i=1}^{N} \delta_{X^{n, i, N}_s}$. We also introduce the integrability index of the tail of the L\'evy measure
\begin{align*}
\beta_\nu & = \sup\{\beta\leq 2: \int^\infty_1 z^\beta \nu(dz) <\infty\}. 
\end{align*}
Note that the above supremum does exist and satisfies $\beta_\nu \geq 1$ since $\int_{1}^{\infty} z\,\nu(dz) < \infty$. 
In the case of $\alpha$-stable like L\'evy measure, with index $\alpha \in [1,2]$, we have $\beta_\nu  = \alpha_\nu = \alpha$ and in the case of tempered $\alpha$-stable L\'evy measure, we have $\beta_\nu  = 2$ and $\alpha_\nu = \alpha$. In order to derive the strong $L^{1}(\mathbb{P})$ convergence rate of the Euler-Maruyama scheme, we introduce the following additional assumptions on the coefficients and the L\'evy measure: 
\begin{Ass} \label{Ass_2}$\,$
\begin{itemize}
\item[(i)] For any $\mu\in \mathcal{P}_1(\R)$, the map $x\mapsto b(x, \mu)$ is $\rho$-H\"older continuous, uniformly in $\mu$, for some $\rho \in (0,1]$. Namely, there exists some positive constant $\kappa$ such that for any $\mu \in \mathcal{P}_1(\R)$:
$$
|b(x, \mu) - b(y, \mu)| \leq \kappa |x-y|^\rho. 
$$

\item [(ii)] $\gamma \in [1/2,\beta_\nu /2)$.
\item [(iii)] $\beta_\nu  > \eta \alpha_\nu$.
\end{itemize}
\end{Ass}
The assumptions (ii) and (iii) stem from some integrability constraints when one investigates the convergence rate of the Euler-Maruyama approximation schemes. Let us note that for $\alpha$-stable like L\'evy measure with index $\alpha \in (1,2]$, the assumption (ii) imposes $\gamma \in [1/2, \alpha/2)$ and (iii) imposes $\eta < 1$ while for tempered $\alpha$-stable like L\'evy measure with index $\alpha \in [1,2]$, the assumption (ii) imposes $\gamma \in [1/2, 1)$ and (iii) is always satisfied if $\alpha \neq 2$ and imposes $\eta<1$ for $\alpha=2$. 

Before stating the main result of this section, we start with the following preparatory lemmas. 

\begin{Lem}\label{euler:moment} Let $S$ be a $\ff$-stopping time taking values in the interval $[0,T]$ then
\begin{itemize}
\item[(i)] the random time $\tau(S) = \inf\{ s\geq 0 : \eta(s) \geq S\}$ is a $\ff$-stopping time and $\{\tau(S) \geq t\} = \{\eta(t)< S\}$,
\item[(ii)] for any $\ff$-adapted process $X$, we have $X_{\eta(S)}$ and $\eta(S)$  are both $\F_{S-}$ measurable.
\end{itemize}
\begin{proof} (i) Using the fact that $\eta$ is left-continuous, for any it holds
\begin{align*}
\{\tau(S) \geq t\} = \{\eta(t)< S\} =  \left(\bigcup_{i=0}^{n-1}\{t_i < t \leq t_{i+1} \} \cap \{t_i < S \}\right) \cup \Big(\{T< t\} \cap \{T < S\}\Big) \in \F_{t-}
\end{align*}
(ii) For any $c\in \mathbb{R}$, using the fact that $S$ is a $\ff$-stopping time taking value in $[0,T]$ we have
\begin{align*}
\{X_{\eta(S)} < c \} =  \bigcup_{i=0}^{n-1}\{t_i < S \leq t_{i+1} \} \cap \{X_{t_i} < c \} \in \F_{S-}.
\end{align*}
Similarly, we see that $\eta(S)$ is $\F_{S-}$ measurable.
\end{proof}
\end{Lem}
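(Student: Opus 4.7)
My plan is to exploit the piecewise-constant, left-continuous structure of the grid function $\eta$: it takes only finitely many values $\{-\infty, t_0, t_1, \dots, t_{n-1}, T\}$, and on each interval $(t_i, t_{i+1}]$ it equals the left endpoint $t_i$. Throughout, I will use the standard characterization of $\mathcal{F}_{S-}$ as the $\sigma$-algebra generated by $\mathcal{F}_0$ together with all sets of the form $A \cap \{t < S\}$, $A \in \mathcal{F}_t$, $t \geq 0$.

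For part (i), the identity $\{\tau(S) \geq t\} = \{\eta(t) < S\}$ is immediate from the definition of $\tau(S)$ combined with the left-continuity of $\eta$: $\tau(S) \geq t$ holds iff $\eta(s) < S$ for every $s<t$, and taking left limits this is equivalent to $\eta(t) < S$. To verify measurability I partition according to which subinterval contains $t$. If $t \in (t_i, t_{i+1}]$, then $\eta(t) = t_i$ is a constant, so $\{\eta(t) < S\} = \{t_i < S\} \in \mathcal{F}_{t_i} \subset \mathcal{F}_{t-}$, since $S$ is a stopping time and $t_i < t$. The boundary cases $t = 0$ (where $\eta(0) = -\infty$, giving the full event $\Omega$) and $t > T$ (where $\{T < S\} = \emptyset$ since $S \leq T$) are trivial. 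This gives the union decomposition announced in the statement and shows $\tau(S)$ is a predictable, hence ordinary, stopping time.

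For part (ii), I first note that $\eta(S) \in \{-\infty, t_0, \dots, t_{n-1}\}$ and that $\{\eta(S) = t_i\} = \{t_i < S \leq t_{i+1}\}$ for $i=0,\dots,n-1$ (and $\{\eta(S) = -\infty\} = \{S=0\}$, treated separately). Both $\{t_i < S\}$ and $\{S \leq t_{i+1}\} = \{t_{i+1} < S\}^c$ belong to $\mathcal{F}_{S-}$ by the characterization above, so their intersection does too. Consequently $\eta(S)$ is $\mathcal{F}_{S-}$-measurable. To deal with $X_{\eta(S)}$ I use the finite decomposition
\begin{equation*}
X_{\eta(S)} = \sum_{i=0}^{n-1} X_{t_i}\, \mathbf{1}_{\{t_i < S \leq t_{i+1}\}},
\end{equation*}
and observe that for each $i$ the set $\{X_{t_i} \in B\} \cap \{t_i < S\}$ sits in $\mathcal{F}_{S-}$ by the defining generators (take $A = \{X_{t_i}\in B\} \in \mathcal{F}_{t_i}$ and $t=t_i$), and intersecting further with $\{S \leq t_{i+1}\}\in\mathcal{F}_{S-}$ preserves this membership.

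There is no real obstacle here; the only subtlety is the correct usage of the generator description of $\mathcal{F}_{S-}$, in particular the observation that $\{S \leq t\}$ belongs to $\mathcal{F}_{S-}$ because it is the complement of $\{t < S\}$. Once this is in hand, both statements reduce to finite unions/intersections indexed by the discretization.
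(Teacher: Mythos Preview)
Your proposal is correct and follows essentially the same route as the paper: the same identity $\{\tau(S)\geq t\}=\{\eta(t)<S\}$ via left-continuity of $\eta$, the same partition over the grid cells $(t_i,t_{i+1}]$, and the same finite decomposition of $X_{\eta(S)}$. You are merely more explicit about invoking the generator description of $\mathcal{F}_{S-}$ and about the boundary cases, which the paper leaves implicit.
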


\begin{Lem} \label{EM_esti_1} Under the same assumptions as in the statement of Lemma \ref{weak:solution:and:moment:estimate} (i), for any $T>0$, there exists a positive constant $C_T$ such that for all positive integer $n$
\begin{align*}
\max_{1\leq i \leq N} \e\Big[\sup_{t\in [0, T]} |X^{n, i, N}_{t}|^\beta\Big] \leq C_T \qquad \mathrm{and} \qquad 
\max_{1\leq i \leq N}\sup_{0\leq t \leq T}\mathbb{E}\Big[|{X}_{t}^{n, i, N}-{X}_{\eta(t)}^{n, i, N}|^\beta\Big] \leq C_Tn^{-\frac{\beta}{2}}
\end{align*}
where we recall that $\eta(t) = t_k$ for any $t \in (t_k, t_{k+1}]$ and for any $k=0, \cdots, n-1$.
\end{Lem}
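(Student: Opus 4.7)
\smallskip
\noindent\emph{Proof proposal.} Both bounds are standard Euler-Maruyama estimates resting on the linear growth condition \eqref{linear:growth:condition} and the moment machinery of Lemma \ref{weak:solution:and:moment:estimate}. My plan is to treat the two estimates successively: first the uniform $\beta$-moment by a Gronwall-type closure, and then the one-step bound by direct inspection using the piecewise-constant nature of the Euler integrands.

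For the first claim, I would write \eqref{euler:approximation:scheme} in integral form and apply the convexity splitting $|a+b+c+d|^{\beta}\leq 4^{\beta-1}(|a|^{\beta}+|b|^{\beta}+|c|^{\beta}+|d|^{\beta})$. The drift contribution is handled by H\"older's inequality, the Brownian martingale by Burkholder--Davis--Gundy, and the compensated Poisson integral by the corresponding Bichteler--Jacod/Kunita inequality (the hypothesis $\int_{z\geq 1}z^{\beta}\nu(dz)<\infty$, inherited from Lemma \ref{weak:solution:and:moment:estimate}, controls the non-small jumps). Coupled with the linear growth condition and the Jensen inequality
\[
W_1(\mu^{n,N}_s,\delta_0)^{\beta}=\Big(\frac{1}{N}\sum_{j=1}^N|X^{n,j,N}_s|\Big)^{\beta}\leq \frac{1}{N}\sum_{j=1}^N|X^{n,j,N}_s|^{\beta},
\]
and using the trivial bound $|X^{n,i,N}_{\eta(s)}|\leq\sup_{u\leq s}|X^{n,i,N}_u|$, this reduces the estimate to
\[
\e\!\Big[\sup_{u\leq t}|X^{n,i,N}_u|^{\beta}\Big]\leq C\Big(1+\int_0^t\max_{1\leq j\leq N}\e\!\Big[\sup_{u\leq s}|X^{n,j,N}_u|^{\beta}\Big]\,ds\Big).
\]
Taking the maximum over $i$ (the particles are exchangeable) and invoking Gronwall yields the first claim.

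For the second claim, the key observation is that on every slab $t\in(t_k,t_{k+1}]$ the Euler integrands are $\mathcal{F}_{t_k}$-measurable, so
\[
X^{n,i,N}_t-X^{n,i,N}_{\eta(t)}=b(X^{n,i,N}_{t_k},\mu^{n,N}_{t_k})(t-t_k)+\sigma(X^{n,i,N}_{t_k})(W^i_t-W^i_{t_k})+h(X^{n,i,N}_{t_k})(Z^i_t-Z^i_{t_k}),
\]
and the three prefactors factor out by independence of the increments from $\mathcal{F}_{t_k}$. The drift produces $O((t-t_k)^{\beta})=O(n^{-\beta})$, the Brownian increment $O((t-t_k)^{\beta/2})$ by BDG, and for the L\'evy increment $\e[|Z^i_t-Z^i_{t_k}|^{\beta}]\leq C(t-t_k)^{\beta/2}$ for $\beta\in[1,2]$ by Jensen applied to the $L^2$-isometry $\e[|Z^i_t-Z^i_{t_k}|^2]=(t-t_k)\int_0^\infty z^2\nu(dz)$, and the analogous Kunita bound otherwise. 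Combining these with linear growth and the first claim to control the $\beta$-moments of the prefactors yields the desired order $C_T n^{-\beta/2}$.

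The main obstacle is purely a matter of bookkeeping: the compensated Poisson integral requires a small-jump/large-jump splitting before Bichteler--Jacod/Kunita is applied, with the exponent and the L\'evy integrability dictating which piece dominates. Once this is handled, both bounds reduce to a routine Euler-Maruyama computation exactly parallel to the continuous setting.
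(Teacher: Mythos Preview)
Your approach to the second estimate matches the paper's, but one statement is wrong as written: you invoke $\e[|Z^i_t-Z^i_{t_k}|^2]=(t-t_k)\int_0^\infty z^2\nu(dz)$, yet only $\int_0^\infty (z\wedge z^2)\,\nu(dz)<\infty$ is assumed, so $\int_0^\infty z^2\nu(dz)$ may diverge. You flag the small/large-jump split in your closing paragraph, but it must be done here: the $\{z<1\}$ piece is an $L^2$-martingale giving $(t-t_k)^{\beta/2}$ via BDG, while the $\{z\geq 1\}$ piece uses $\int_{z\geq 1}z^\beta\nu(dz)<\infty$ to produce a contribution of order $(t-t_k)$. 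This is exactly what the paper does.

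Your approach to the first estimate is genuinely different and, in outline, simpler. The paper does \emph{not} run a single BDG/Kunita $+$ Gronwall. Instead it separates the large-jump compound Poisson part $\widehat Z^i_t=\int_0^t\int_{z\geq 1}z\,\widetilde N^i(ds,dz)$, conditions on all the $\widehat Z^j$ (and on the jump times $(T^i_n)_{n\geq 1}$), and argues recursively on each inter-jump interval $[T^i_n\wedge T,T^i_{n+1}\wedge T]$, where only $L^2$ small jumps remain; a multiplicative factor $(1+\e|Z_n|^\beta)$ enters at each big jump, and the final bound is reassembled by summing against the Poisson law of the number of large jumps in $[0,T]$. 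Lemma \ref{euler:moment} is used to reconcile the Euler grid with these random stopping times, and conditional exchangeability of the particles given $(\widehat Z^1,\dots,\widehat Z^N)$ handles the empirical-measure term. Your direct route avoids all this, but your write-up skips a step: for $\beta<2$, BDG on the Brownian and small-jump integrals yields $\e\big[\big(\int_0^t(1+|X_{\eta(s)}|)^2\,ds\big)^{\beta/2}\big]$, and Jensen goes the wrong way to turn this into $\int_0^t\e[\sup_{u\leq s}|X_u|^\beta]\,ds$. The standard remedy is the Young-inequality splitting
\[
\Big(\int_0^t f(s)^2\,ds\Big)^{\beta/2}\leq \epsilon\,\sup_{s\leq t}f(s)^\beta+C_\epsilon\int_0^t f(s)^\beta\,ds,
\]
after which the $\epsilon$-term is absorbed into the left-hand side and Gronwall closes. (The paper itself uses exactly this trick inside its inter-jump step.) Once you make this explicit your argument goes through; the paper's conditioning route buys a proof that parallels the moment estimate for the limit SDE in Lemma \ref{weak:solution:and:moment:estimate}, at the cost of considerable length.
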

\begin{proof}
\noindent The proof of the first moment estimate follows from similar lines of reasonings as those employed to prove \eqref{moment:estimate:jump:type:mckean:sde} of Lemma \ref{weak:solution:and:moment:estimate}. We focus below on the particularities which are induced by having piecewise constant coefficients. 
\vskip3pt

\noindent \emph{Step 1:} For notational convenience, in the proof of the first moment estimate, we write $X^{i}:= (X^{n, i, N}_t)_{0\leq t \leq T}$, $\mu_t = \mu^{n, N}_t$, $t \in [0,T]$, $W^{i}= (W^{i}_t)_{0\leq t \leq T}$, $\widetilde{Z}^{i} = (\widetilde{Z}^{i}_t)_{0\leq t\leq T}$ and $\widehat{Z}^{i} = (\widehat{Z}^i)_{0\leq t \leq T}$ where we introduced the notations $\widetilde{Z}^{i}_t = \int_0^t \int_0^1 z d\widetilde{N}^{i}(ds,dz)$ and $\widehat{Z}^{i}_t = \int_0^t \int_1^{\infty} z d\widetilde{N}^{i}(ds, dz)$. From the Yamada-Watanabe theorem, there exists a measurable map 
$$
\Phi_N: (\R^d)^{N} \times \bigg(\mathcal{C}([0,T]; \R^d)\bigg)^{N} \times \bigg(\mathcal{D}([0,T]; \R^d)\bigg)^{N} \times \bigg(\mathcal{D}([0,T]; \R^d)\bigg)^{N}\rightarrow \bigg(\mathcal{C}([0,T]; \R^d)\bigg)^{N}
$$
\noindent such that
$$
(X^{1}, \cdots, X^{N}) = \Phi_N((\xi^1, \cdots, \xi^{N}), (W^1, \cdots, W^{N}), (\widetilde{Z}^1,\cdots, \widetilde{Z}^{N}), (\widehat{Z}^1,\cdots, \widehat{Z}^{N}))
$$

\noindent Moreover, by symmetry of the dynamics \eqref{euler:approximation:scheme}, for any permutation $\zeta $ of $\left\{1, \cdots, N\right\}$, we get
$$
(X^{\zeta(1)}, \cdots, X^{\zeta(N)}) = \Phi_N((\xi^{\zeta(1)}, \cdots, \xi^{\zeta(N)}), (W^{\zeta(1)}, \cdots, W^{\zeta(N)}), (\widetilde{Z}^{\zeta(1)},\cdots, \widetilde{Z}^{\zeta(N)}), (\widehat{Z}^{\zeta(1)},\cdots, \widehat{Z}^{\zeta(N)})).
$$

Now, combining the fact that $(\widehat{Z}^1, \cdots, \widehat{Z}^{N})$ and $\big((\xi^1, \cdots, \xi^{N}), (W^1, \cdots, W^{N}), (\widetilde{Z}^1,\cdots, \widetilde{Z}^{N})\big)$ are independent together with the fact that $(\xi^{i}, W^i, \widetilde{Z}^i)_{1\leq i \leq N}$ are i.i.d, we deduce that conditionally on $(\widehat{Z}^1, \cdots, \widehat{Z}^{N})$ the processes $(X^{i})_{1\leq i \leq N}$ are identically distributed.
\vskip2pt
\noindent \emph{Step 2:} The integral against $\widehat{Z}^{i}$ generates jumps at discrete instants, that is, one may write the restriction of $\widetilde{N}^i$ to the set $[0,\infty) \times \left\{ z \geq 1 \right\}$ as $\sum_{n\geq1}\delta_{(T^{i}_n, Z_n^i)}$ where $(T^{i}_n)_{n\geq1}$ are the jump times of a Poisson process $J^{i}$ with intensity $\lambda = \int_{1}^{\infty} \nu(dz)$, the random variables $(Z^{i}_n)_{n\geq1}$ being i.i.d. with law $\lambda^{-1} \textbf{1}_{z\geq1} \nu(dz)$. We denote by $\G^{i}:=\sigma(T^{i}_n, n\geq1)$ the $\sigma$-algebra generated by all the jump times of the compound Poisson process $\widehat Z^{i}$.

We now prove some conditional moment estimate for $X^{i}$. As the computations are similar between two successive instants $T^{i}_{n}, \, T^{i}_{n+1}$, we only give the estimate on the interval $[T^{i}_1\wedge T, T^{i}_2\wedge T]$. The dynamics after $T^{i}_1\wedge T$ and strictly before $T^{i}_2\wedge T$ is given by
\begin{align}
X^{i}_t & = X^{i}_{T^{i}_1\wedge T} + \int_{(T^{i}_1\wedge T,t]} \widetilde b(X^{i}_{\eta(s)}, \mu_{{\eta(s)}}) ds + \int_{(T^{i}_1\wedge T,t]} \sigma(X^{i}_{\eta(s)}) dW^{i}_s + \int_{(T^{i}_1\wedge T,t]} h(X^{i}_{\eta(s)}) d\widetilde{Z}^{i}_s\label{eT1T2}
\end{align}
\noindent with $\widetilde{b}(x, \mu) := b(x, \mu) - h(x) \int_{z\geq1} z \nu(dz)$.

By using Lemma \ref{euler:moment} and the linear growth condition on $b$ and $h$, for any $t\in (T^{i}_1\wedge T, T)$, the absolute value of the drift can bounded as follows
\begin{align*}
& \Big|\int_{(T^{i}_1\wedge T,t]} \widetilde b(X^{i}_{\eta(s)}, \mu_{{\eta(s)}}) ds\,\Big| \\
& \leq  C_T \int^{t}_{T^{i}_1\wedge T} (1+ |X^{i}_{\eta(T^{i}_1\wedge T)}|)\, \1_{\{s \leq  \tau(T^{i}_1\wedge T)\}}  ds + C_T \int^{t}_{T^{i}_1\wedge T} (1+ |X^{i}_{\eta(s)}|)\, \1_{\{s >\tau(T^{i}_1\wedge T)\}}  ds + C_T \int^{t}_{T^{i}_1\wedge T} \frac{1}{N}\sum_{j=1}^{N}|X^{j}_{\eta(s)}|  \, ds \\
& \leq   C_T(1+ |X^{i}_{\eta(T^{i}_1\wedge T)}|)Tn^{-1} + C_T \int^{t}_{T^i_1\wedge T} \1_{\{\eta(s) \geq T^{i}_1 \wedge T \}} |X^i_{\eta(s)}| ds + C_T \int^{t}_{T^i_1\wedge T} \frac{1}{N}\sum_{j=1}^{N}|X^{j}_{\eta(s)}|  \, ds \\
& \leq C_T\left(1+ |X^i_{\eta(T^i_1\wedge T)}| + \frac{1}{N}\sum_{j=1}^{N}|X^{j}_{\eta(T^i_1\wedge T)}|  + \int^{t}_{T^i_1\wedge T} \1_{\{\eta(s) \geq T^i_1\wedge T\}}|X^i_{\eta(s))}| ds +  \int^{t}_{T^i_1\wedge T} \1_{\{\eta(s) \geq T^i_1\wedge T\}} \frac{1}{N}\sum_{j=1}^{N}|X^{j}_{\eta(s)}|  \, ds\right).
\end{align*}
where we have used the fact that $|\tau(s) - s| \leq Tn^{-1}$ for all $s\in [0,T]$.

 On the other hand, again by using Lemma \ref{euler:moment}, the stochastic integrals against the $L^2$-martingales $W^i$ and $\widetilde Z^i$ can be similarly decomposed into
\begin{align*}
\int_{(T^{i}_1\wedge T, t]} \sigma(X^i_{\eta(s)}) dW^i_s & = \sigma(X^i_{\eta(T^{i}_1\wedge T)})(W^i_{\tau(T^{i}_1\wedge T)} - W^i_{T^{i}_1\wedge T}) + \int_{(T^{i}_1\wedge T, t]} \sigma(X^i_{\eta(s)})\1_{\{\eta(s) \geq T^{i}_1\wedge T \}} dW^i_s.
\end{align*}
 By using the Burkholder-Davis-Gundy inequality and then the Jensen inequality, we obtain
\begin{align}
 \mathbb{E}\big[\sup_{T^i_1 \wedge T \leq t < T^i_2 \wedge T } |X^i_t|^\beta \, \big| \,\F_{T^i_1 \wedge T} \vee \sigma(\widehat{Z}^1, & \cdots, \widehat{Z}^{N})\big] 
\leq C_T\Bigg(1+ |X^i_{T^i_1\wedge T}|^\beta\vee |X^i_{\eta(T^i_1\wedge T)}|^\beta  + \frac{1}{N}\sum^N_{j=1} |X^j_{\eta(T^i_1\wedge T)}|^\beta \nonumber  \\
& \quad + \int^{T^i_2\wedge T}_{T^i_1\wedge T}  \1_{\{\eta(s) \geq T^i_1\wedge T\}} \mathbb{E}\big[|X^i_{\eta(s)}|^\beta\big| \,\F_{T^i_1 \wedge T} \vee \sigma(\widehat{Z}^1, \cdots, \widehat{Z}^{N})\big] ds \nonumber \\
& \quad + \int^{T^i_2\wedge T}_{T^i_1\wedge T}  \1_{\{\eta(s) \geq T^i_1\wedge T\}} \frac{1}{N}\sum_{j=1}^{N} \mathbb{E}\big[|X^j_{\eta(s)}|^\beta\big|\, \,\F_{T^i_1 \wedge T} \vee \sigma(\widehat{Z}^1, \cdots, \widehat{Z}^{N})\big] \,  ds \nonumber \\
& \quad +  \mathbb{E}\bigg[\,\bigg[\int^{T^i_2\wedge T}_{T^i_1\wedge T}   |X^i_{\eta(s)\vee (T^i_1 \wedge T)}|^2 ds \bigg]^\frac{\beta}{2}\, \bigg| \,\F_{T^i_1\wedge T} \vee \sigma(\widehat{Z}^1, \cdots, \widehat{Z}^{N})\bigg]\Bigg)\label{tempor:inequality:before:Gronwall} . 
\end{align}
Next, in order to apply Gr\"onwall's inequality, we first notice that 
\begin{align*}
\left[\int^{T^{i}_2\wedge T}_{T^{i}_1\wedge T}   |X^{i}_{\eta(s)\vee (T^{i}_1 \wedge T)}|^2 ds \right]^\frac{\beta}{2} 
& = \left[\int^{T^{i}_2\wedge T}_{T^{i}_1 \wedge T}   \left(|X^{i}_{\eta(s)\vee (T^{i}_1\wedge T)}|^{\beta}\right)^{\frac{2}{\beta}(1-\frac{\beta}{2})}|X^i_{\eta(s)\vee (T^{i}_1\wedge T)}|^\beta ds \right]^\frac{\beta}{2} \\
& \leq  \left[\int^{T^{i}_2\wedge T}_{T^{i}_1\wedge T}   \left(\sup_{T^{i}_1\wedge T \leq s < T^{i}_2\wedge T} |X^i_{\eta(s)\vee (T^{i}_1\wedge T)}|^{\beta}\right)^{\frac{2}{\beta}(1-\frac{\beta}{2})}|X^i_{\eta(s)\vee (T^{i}_1\wedge T)}|^\beta ds \right]^\frac{\beta}{2} \\
& \leq   \left[\frac{1}{4C_T}\sup_{T^{i}_1\wedge T\leq s < T^{i}_2\wedge T} |X^{i}_{\eta(s)\vee (T^{i}_1\wedge T)}|^{\beta}\right]^{(1-\frac{\beta}{2})} \left[(4C_T)^{(1-\frac{\beta}{2})\frac{2}{\beta}}\int^{T^{i}_2\wedge T}_{T^{i}_1 \wedge T}  |X^{i}_{\eta(s)\vee (T^{i}_1\wedge T)}|^\beta ds \right]^\frac{\beta}{2} 
\end{align*}
and then we apply the Young inequality with $p^{-1} = 1-\frac{\beta}{2}$ and $q^{-1} = \frac{\beta}{2}$ to obtain
\begin{align*}
\left[\int^{T^i_2\wedge T}_{T^i_1\wedge T}   |X^i_{\eta(s)\vee (T^i_1\wedge T)}|^2 ds \right]^\frac{\beta}{2} \leq \frac{1}{4C_T}\sup_{T^i_1\wedge T \leq s < T^i_2\wedge T} |X^i_{\eta(s)\vee (T^i_1\wedge T)}|^{\beta} + C_{T,\beta}\int^{T^i_2\wedge T}_{T^i_1 \wedge T}  |X^{i}_{\eta(s)\vee (T^i_1\wedge T)}|^\beta ds .
\end{align*}
Note that since $C_T \times \frac{1}{4C_T} = \frac14  <1$, we can move the first term appearing on the right-hand side of the above inequality to the left hand side of \eqref{tempor:inequality:before:Gronwall} and obtain
\begin{align*}
& \mathbb{E}\big[\sup_{T^i_1 \wedge T \leq t < T^i_2 \wedge T } |X^i_t|^\beta \,\big|\,\F_{T^i_1 \wedge T} \vee \sigma(\widehat{Z}^1, \cdots, \widehat{Z}^{N})\big]\\
& \leq C_T\left(1+ \mathbb{E}[\sup_{0\leq t \leq T^i_1\wedge T} |X^i_{t}|^\beta|\,\F_{T^i_1 \wedge T} \vee \sigma(\widehat{Z}^1, \cdots, \widehat{Z}^{N})] + \frac{1}{N}\sum^N_{j=1}  \mathbb{E}[|X^j_{\eta(T^i_1\wedge T)}|^\beta | \,\F_{T^i_1 \wedge T} \vee \sigma(\widehat{Z}^1, \cdots, \widehat{Z}^{N})] \right) \\
& \quad + C_T \int^{T^i_2\wedge T}_{T^i_1\wedge T}  \1_{\{\eta(s) \geq T^i_1\wedge T\}} \mathbb{E}\big[|X^i_{\eta(s)}|^\beta\big|\, \F_{T^i_1 \wedge T} \vee \sigma(\widehat{Z}^1, \cdots, \widehat{Z}^{N})\big] \, ds \\
& \quad + C_T \int^{T^i_2\wedge T}_{T^i_1\wedge T}  \1_{\{\eta(s) \geq T^i_1\wedge T\}} \frac{1}{N}\sum_{j=1}^{N} \mathbb{E}\big[|X^j_{\eta(s)}|^\beta\big|\,\F_{T^i_1 \wedge T} \vee \sigma(\widehat{Z}^1, \cdots, \widehat{Z}^{N})\big]   \, ds.
\end{align*}

In order to deal with the average term, we take the conditional expectation with respect to $\sigma(\widehat{Z}^1, \cdots, \widehat{Z}^{N})$ in the preceding inequality, from the tower property of conditional expectation and the conclusion of step 1, we obtain 
$$
\frac{1}{N}\sum_{j=1}^N \mathbb{E}\big[|X^j_{\eta(s)}|^\beta\big|\,\sigma(\widehat{Z}^1, \cdots, \widehat{Z}^{N})\big] = \mathbb{E}\big[|X^i_{\eta(s)}|^\beta\big|\,\sigma(\widehat{Z}^1, \cdots, \widehat{Z}^{N})\big]. $$
We point out here that since $X^i_{\eta(s)}$ is the Euler-Maruyama scheme at a grid point, it is clear that it is a functional of the Brownian and the L\'evy increments. From the previous computations, we thus get
\begin{align*}
& \mathbb{E}\big[\sup_{T^i_1 \wedge T \leq t < T^i_2 \wedge T } |X^i_t|^\beta \,\big|\, \sigma(\widehat{Z}^1, \cdots, \widehat{Z}^{N})\big]\\
& \leq C_T\left(1+ \mathbb{E}[\sup_{0\leq t \leq T^i_1\wedge T} |X^i_{t}|^\beta|\, \sigma(\widehat{Z}^1, \cdots, \widehat{Z}^{N})] + \int^{T^i_2\wedge T}_{T^i_1\wedge T}  \mathbb{E}\big[ \sup_{T^i_1\wedge T \leq t < s\wedge (T^i_2\wedge T)}|X^i_{t}|^\beta\big|\, \, \sigma(\widehat{Z}^1, \cdots, \widehat{Z}^{N})\big]\right) 
\end{align*}

\noindent so that, by Gr\"onwall's inequality
\begin{align*}
 \mathbb{E}\big[\sup_{T^i_1 \wedge T \leq t < T^i_2 \wedge T} |X^i_t|^\beta \,\big|\, \sigma(\widehat{Z}^1, \cdots, \widehat{Z}^{N}) \big]& \leq C_T\Big(1+ \mathbb{E}[\sup_{0\leq t \leq T^i_1\wedge T} |X^i_{t}|^\beta| \sigma(\widehat{Z}^1, \cdots, \widehat{Z}^{N})] \Big),
\end{align*}
and, taking conditional expectation w.r.t $\mathcal{G}^{i}$, we thus obtain
\begin{align}
 \mathbb{E}\big[\sup_{T^i_1 \wedge T \leq t < T^i_2 \wedge T} |X^i_t|^\beta \,\big|\, \G^i \big]& \leq C_T\Big(1+ \mathbb{E}[\sup_{0\leq t \leq T^i_1\wedge T} |X^i_{t}|^\beta| \G^i] \Big). \label{first:step:beta:moment:euler:scheme}
\end{align}
In particular, the preceding upper-bound yields
\begin{align}
\mathbb{E}\big[ \sup_{T^{i}_1 \wedge T \leq t \leq T^{i}_2 \wedge T}  |X^{i}_t|^{\beta}\big| \,\G^i \big] 
& \leq C_T\Big(1+ \mathbb{E}[\sup_{0\leq t \leq T^i_1\wedge T} |X^i_{t}|^\beta| \,\G^i ]  \Big) + \mathbb{E}\big[|X^i_{T^{i}_2}|^{\beta}\big|\,\G^i \big]  \textbf{1}_{\left\{T^i_2< T\right\}}\label{rec1}.
\end{align}
\noindent Now at the jump time $T^i_2$, $X^{i}_{T^{i}_2} = X^{i}_{T^{i}_2-}+h(X^i_{\eta(T^i_2)}) Z^i_2$ and by using the linear growth assumption of coefficient $h$ (uniformly on $[0,T]$), we have $|X^{i}_{T^{i}_2}|^\beta \leq C_T(1+|X^{i}_{T^{i}_2-}|^\beta \vee |X^{i}_{\eta(T^{i}_2)}|^\beta)(1+|Z^{i}_2|^\beta)$ on the set $\left\{ T^{i}_2 < T\right\}$. From \eqref{first:step:beta:moment:euler:scheme} and the fact that $\int_{1}^{\infty} z^\beta \nu(dz) < \infty$, we obtain
\begin{align}
\mathbb{E}[|X^{i}_{T^i_2}|^\beta | \G^i] & \leq C_T(1+ \mathbb{E}[|Z_2|^\beta])\Big(1+ \mathbb{E}[\sup_{0\leq t <  T^i_2\wedge T} |X^i_{t}|^\beta|\G^i]  \Big)\nonumber \\
										& \leq C_T(1+ \mathbb{E}[|Z_2|^\beta])\Big(1+ \mathbb{E}[\sup_{0\leq t \leq  T^i_1\wedge T} |X^i_{t}|^\beta|\G^i]  + \mathbb{E}[\sup_{T^i_1\wedge T \leq t <  T^i_2\wedge T} |X^i_{t}|^\beta|\G^i]\Big).\nonumber \\
										& \leq C_T(1+ \mathbb{E}[|Z_2|^\beta])\Big(1+ \mathbb{E}[\sup_{0\leq t \leq  T^i_1\wedge T} |X^i_{t}|^\beta|\G^i] \Big).\label{rec2}
\end{align}
From \eqref{rec1} and \eqref{rec2} we deduce that there exists constant $M_{T}$ (now depending on $\int_{1}^{\infty} z^\beta \nu(dz)$) such that 
\begin{align*}
\mathbb{E}\big[\sup_{T^{i}_1 \wedge T \leq t \leq T^{i}_2 \wedge T}  |X^{i}_t|^{\beta}\big|\,\G^i \big] & \leq M_T\Big(1+ \mathbb{E}[\sup_{0\leq t \leq T^i_1\wedge T} |X^i_{t}|^\beta|\G^i]  \Big).
\end{align*}

Performing similar computations on any time interval $[T^{i}_{n}\wedge T, T^{i}_{n+1}\wedge T]$, one deduces that
\begin{align}
\mathbb{E}\big[\sup_{T^{i}_{n} \wedge T \leq t \leq T^{i}_{n+1} \wedge T}  |X^{i}_t|^{\beta}\big|\, \G^i\big] & \leq M_T\Big(1+ \mathbb{E}[\sup_{0\leq t \leq T^i_n\wedge T} |X^i_{t}|^\beta|\,\G^i]  \Big)\label{rec3}
\end{align}
for any integer $n$, with the convention $T^{i}_0= 0$. 

Thus for any pair $T^i_j < T^i_k$, by setting $S_{[T^i_j, T^i_k]}:= \mathbb{E}[\sup_{T^i_j\wedge T\leq s \leq T^i_k\wedge T}|X^i_s|^\beta\,|\,\G]$ and by using \eqref{rec3}, we observe that
\begin{align*}
1+ S_{[0,T^i_{k+1}]}  \leq 1+ S_{[0,T^i_{k}]} + S_{[T^i_k,T^i_{k+1}]} \leq 1+ S_{[0,T^i_{k}]} +M_T(1+ S_{[0,T^i_{k}]} ) \leq (1+M_T)(1+ S_{[0,T^i_{k}]}),
\end{align*}
which implies that $1+ S_{[0,T^i_{k+1}]} \leq (1+ M_\beta(\mu))(1+M_T)^{k+1}$ where $M_\beta(\mu) = \int_{\R} |x|^\beta d\mu(x)$. Finally by setting $K_T := (1+ M_\beta(\mu))(1+M_T)$, we obtain
\begin{align*}
\mathbb{E}[\sup_{0\leq t \leq T}|X_t^i|^\beta] & = \sum_{n\geq0} \mathbb{E}[\sup_{0\leq t \leq T}|X^i_t|^\beta \textbf{1}_{\left\{J^{i}_T=n\right\}}] \\
& =  \sum_{n\geq0} \mathbb{E}\big[\textbf{1}_{\left\{T^{i}_n\leq T < T^{i}_{n+1}\right\}} \mathbb{E}\big[\sup_{0\leq t \leq T^{i}_{n+1}\wedge T}|X^i_t|^\beta |\mathcal{G}^{i}\big] \big]\\
& \leq \sum_{n\geq0} K^{n+1}_T \frac{(\lambda T)^{n}}{n!} e^{-\lambda T} \,ds < \infty.
\end{align*}

\noindent \emph{Step 3:} We now prove the moment estimate on the time increments of the Euler-Maruyama scheme. We note that from the linear growth assumption of the coefficients and Jensen's inequality for any $t\in [0,T]$ it holds
	\begin{align}\label{inequality:moment:estimate:time:increment:euler:scheme}
 	|{X}_{t}^{n,i,N}-{X}_{\eta(t)}^{n,i,N}|^\beta
& \leq
	K\Big[1+|{X}_{\eta(t)}^{n,i,N}|^\beta + \frac{1}{N}\sum_{j=1}^N |X^{n, j,N}_{\eta(t)}|^\beta	 \Big]
	\left(|t-\eta(t)|^\beta
	+ |W^{i}_{t}-W^{i}_{\eta(t)}|^\beta+|Z^{i}_{t}-Z^{i}_{\eta(t)}|^\beta\right).
	\end{align}
We observe that $\max_{1\leq i \leq N}\e[|W^{i}_{t} - W^{i}_{\eta(t)}|^\beta] \leq C n^{-\frac{\beta}{2}}$ and
\begin{align*}
\max_{1\leq i \leq N}\e[|Z^{i}_t - Z^{i}_{\eta(t)}|^{\beta}] & \leq C \Big( (t-\eta(t))^{\frac{\beta}{2}} + (t-\eta(t))^{\beta-1} \mathbb{E}\bigg[\int_{\eta(t)}^{t} \int_{z\geq1} z^\beta N(ds, dz)\bigg] \bigg) \\
	& \leq C \Big(n^{-\frac{\beta}{2}} + n^{-\beta} \int_{z\geq1} z^\beta \nu(dz) \Big) \leq C n^{-\frac{\beta}{2}}.
\end{align*}
Then, taking the expectation in \eqref{inequality:moment:estimate:time:increment:euler:scheme}, using the first moment estimate established in the previous step together with the fact that the processes $(X^{n, i, N})_{1\leq i \leq N}$, are identically distributed, we obtain
\begin{align*}
\max_{1\leq i \leq N}\sup_{0\leq t \leq T}\mathbb{E}[|{X}_{t}^{n,i,N}-{X}_{\eta(t)}^{n,i,N}|^\beta]
& \leq
	C \Big[1+\max_{1\leq i \leq N}\mathbb{E}[\,\sup_{0\leq t \leq T}|{X}_{t}^{n,i,N}|^\beta]\Big]
	\Big(n^{-\beta}
	+ n^{-\frac{\beta}{2}}\Big) \leq C n^{-\frac{\beta}{2}}.
	\end{align*}
\end{proof}

\begin{Thm}\label{euler:main}
Suppose that the assumptions of Theorem \ref{strong:existence:uniqueness} and assumption \ref{Ass_2} hold. Then, for all $T>0$, there exists a positive constant $C$ (independent of $n$ and $N$) such that
	\begin{align*}
	\max_{1\leq i \leq N}\sup_{0\leq t \leq T}\e[|X^{i,N}_{t}-{X}_{t}^{n,i,N}|]
	\leq C \varepsilon_n
	\end{align*}
\noindent with
$$
\varepsilon_n:=
\left\{ \begin{array}{ll}
	\displaystyle
	n^{-\frac12 \rho\wedge \eta}
	+ n^{- p^{*} },
	& \textnormal{ if } \gamma \in (1/2,\beta_\nu /2),\\
	\displaystyle
	\log(n)^{-1},
	& \textnormal{ if } \gamma=1/2\\
	\end{array}\right.
$$
and
$$
p^{*} := \left\{ \begin{array}{ll}
\displaystyle
\gamma\wedge \eta-\frac12\frac{\gamma\wedge \eta}{\gamma},& \textnormal{ if } \alpha_\nu \in [1, 2 \frac{(1-\gamma)}{1-\eta}], \\
 \eta-\frac{\eta}{2-\alpha_\nu (1-\eta)-\delta}  ,& \textnormal{ if } \alpha_\nu \in(2 \frac{(1-\gamma)}{1-\eta},2]
\end{array}\right.
$$
\noindent for any $\delta \in (0, 1-(1-\eta)\alpha_\nu)$. In the special case where $\nu(dz)$ is an $\alpha$-stable like L\'evy measure for $\alpha \in [1,2]$, we can take $\alpha_\nu =\alpha$ and $\delta = 0$ in $p^*$.

\end{Thm}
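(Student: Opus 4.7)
The plan is to adapt the Yamada-Watanabe regularization from the proof of Theorem \ref{strong:existence:uniqueness} to the pair $(X^{i,N}, X^{n,i,N})$, the novelty being that the coefficients in the Euler scheme are frozen at the previous grid point $\eta(s)$. Setting $\bar{\Delta}^{i}_t := X^{i,N}_t - X^{n,i,N}_t$, $\bar{\gamma}_s := \sigma(X^{i,N}_s) - \sigma(X^{n,i,N}_{\eta(s)})$ and $\bar{\lambda}_s := h(X^{i,N}_{s-}) - h(X^{n,i,N}_{\eta(s-)})$, It\^o's formula applied to $\phi_{\delta,\varepsilon}(\bar{\Delta}^{i}_t)$ produces, after the standard localization used in Theorem \ref{strong:existence:uniqueness}, a decomposition $|\bar{\Delta}^{i}_t| \leq \varepsilon + M^{\delta,\varepsilon}_t + I^{\delta,\varepsilon}_t + J^{\delta,\varepsilon}_t + K^{\delta,\varepsilon}_t$ of the same form as \eqref{first:step:yamada:watanabe:technique:pathwise:uniqueness}.

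Each piece is then bounded by inserting the intermediate point $X^{n,i,N}_s$ in order to separate the dynamical difference between the exact and the Euler system at the same time $s$ from the pure time-discretization error of the Euler scheme between $\eta(s)$ and $s$. For $I^{\delta,\varepsilon}_t$, Assumption \ref{Ass_1}(ii) bounds the first piece by $|\bar{\Delta}^{i}_s| + W_1(\mu^{N}_s, \mu^{n,N}_s) \leq |\bar{\Delta}^{i}_s| + N^{-1}\sum_{j=1}^{N} |\bar{\Delta}^{j}_s|$, while Assumption \ref{Ass_2}(i) combined with the one-sided Lipschitz in measure of Assumption \ref{Ass_1}(ii) controls the second piece by $C(|X^{n,i,N}_s - X^{n,i,N}_{\eta(s)}|^{\rho}+|X^{n,i,N}_s - X^{n,i,N}_{\eta(s)}|)$, whose expectation is dominated by $C n^{-(\rho\wedge 1)/2}$ thanks to Lemma \ref{EM_esti_1}. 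For $J^{\delta,\varepsilon}_t$, the H\"older regularity of $\sigma$ together with the second derivative bound on $\phi_{\delta,\varepsilon}$ reproduces the term $\varepsilon^{2\gamma-1}/\log\delta$ plus a discretization contribution of order $n^{-\gamma}/(\varepsilon\log\delta)$. For $K^{\delta,\varepsilon}_t$, I recycle the sign decomposition and the tuning $u = \log(\delta)\varepsilon^{1-\eta}$ of Theorem \ref{strong:existence:uniqueness}, and split $\bar{\lambda}_{s-} = [h(X^{i,N}_{s-})-h(X^{n,i,N}_{s-})] + [h(X^{n,i,N}_{s-})-h(X^{n,i,N}_{\eta(s-)})]$; the first part is absorbed exactly as in the pathwise uniqueness argument, while the second produces an additional jump-discretization contribution that requires moments of the form $\mathbb{E}[|X^{n,i,N}_s - X^{n,i,N}_{\eta(s)}|^{p}]$ with $p \in \{2\eta, \eta, \eta\beta_\nu\}$, bounded by $n^{-p/2}$ via Lemma \ref{EM_esti_1}. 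It is precisely here that the constraints $\gamma \in [1/2, \beta_\nu/2)$ and $\beta_\nu > \eta\alpha_\nu$ of Assumption \ref{Ass_2} are needed to keep the relevant jump moments finite.

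Using the identity in law between the pairs $(X^{i,N}, X^{n,i,N})$ for different $i$, Gronwall's lemma applied to $t \mapsto \mathbb{E}[|\bar{\Delta}^{1}_t|]$ reduces the proof to balancing, in $(\varepsilon, \delta)$, the sum of the Yamada-Watanabe residuals and the Euler discretization errors. When $\gamma > 1/2$, one fixes $\delta = 2$ and optimizes $\varepsilon$: depending on whether $\alpha_\nu \leq 2(1-\gamma)/(1-\eta)$, the binding constraint is either $\varepsilon^{2\gamma-1}$ against $n^{-(\gamma\wedge\eta)}/\varepsilon$, yielding the first expression for $p^{*}$, or the jump balance between $\varepsilon^{1-\alpha(1-\eta)}$ and its discretization counterpart, yielding the second expression after letting $\alpha \downarrow \alpha_\nu$ up to the arbitrarily small slack $\delta \in (0, 1-(1-\eta)\alpha_\nu)$ inside $p^{*}$. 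The critical case $\gamma = 1/2$ makes $\varepsilon^{2\gamma-1}/\log\delta = 1/\log\delta$ independent of $\varepsilon$, so one must send $\delta \uparrow \infty$ jointly with $n$, and the best rate one can extract is the logarithmic $(\log n)^{-1}$.

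The main obstacle is exactly this simultaneous optimization across the three scales $\varepsilon$, $\delta$ and $n$: the interplay between the singularity exponent $\alpha_\nu$ entering through the jump integral bound \eqref{intermediate:bound} and the integrability exponent $\beta_\nu$ entering through Lemma \ref{EM_esti_1} is what produces the two regimes in $p^{*}$. Once the correct $\varepsilon$ and $\delta$ are selected, the remainder of the argument is a careful but structurally analogous repetition of the computation carried out for pathwise uniqueness.
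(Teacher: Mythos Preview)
Your outline follows the paper's strategy closely and the drift, diffusion, and first-jump pieces are handled exactly as the paper does. The one place where your sketch hides a real difficulty is the second jump piece. When you write ``split $\bar{\lambda}_{s-} = [h(X^{i,N}_{s-})-h(X^{n,i,N}_{s-})] + [h(X^{n,i,N}_{s-})-h(X^{n,i,N}_{\eta(s-)})]$'' and say the first part is absorbed as in the pathwise-uniqueness proof, note that $\phi_{\delta,\varepsilon}$ is nonlinear, so the leftover integrand is not of the form $\phi_{\delta,\varepsilon}(y+x''z)-\phi_{\delta,\varepsilon}(y)-x''z\phi_{\delta,\varepsilon}'(y)$ but rather
\[
\phi_{\delta,\varepsilon}(y+xz)-\phi_{\delta,\varepsilon}(y+x'z)-(x-x')z\,\phi_{\delta,\varepsilon}'(y),
\]
with $x=\bar{\lambda}_{s-}$ and $x'=h(X^{i,N}_{s-})-h(X^{n,i,N}_{s-})$. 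This is precisely what the paper isolates as $K^{i,n,\delta,\varepsilon,2}_t$ and estimates via the dedicated Lemma~\ref{key_lem12}, which you do not invoke. That lemma is where the exponent $\alpha_2\in(\alpha_\nu,\beta_\nu/\eta)\cap(\alpha_\nu,2]$ enters (your ``$p\in\{2\eta,\eta,\eta\beta_\nu\}$'' is not quite it: the moment order is $\alpha_2\eta$, constrained to be strictly below $\beta_\nu$), and it also introduces a \emph{second} free cutoff $u$ and a third tail exponent $\alpha_3$, independent of the tuning $u=\log(\delta)\varepsilon^{1-\eta}$ used for $K^1$. Without this lemma the sign decomposition alone does not close the estimate, because on $\{yx'\ge 0\}$ the crude bound $|x'|/|y|$ would not be integrable.

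The optimization step is also richer than ``fix $\delta=2$, balance $\varepsilon^{2\gamma-1}$ against $n^{-(\gamma\wedge\eta)}/\varepsilon$'': after Gronwall the upper bound carries the parameters $(\alpha_1,\alpha_2,\alpha_3,u)$ in addition to $(\varepsilon,\delta)$, and the paper's derivation of $p^*$ is a linear-programming argument in $q$ (with $\varepsilon=n^{-q}$) over the system $q\zeta_2=\zeta_3-q$, $q\zeta_2=\zeta_4-q\zeta_5$, followed by separate optimization of $\alpha_1,\alpha_2,\alpha_3$ in each regime. Your two binding constraints are the right ones, but getting the exact expressions for $p^*$ and verifying admissibility of all parameter choices (e.g.\ $\alpha_3=1+\tfrac12\alpha_\nu^+>\alpha_\nu$) takes the detailed bookkeeping the paper carries out.
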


\begin{Rem}
Before proceeding to the proof, let us make some comments about the convergence rate obtain in Theorem \ref{euler:main}. First, observe that in the case $\gamma \leq \eta$ (or equivalently $2(1-\gamma)/(1-\eta)\geq 2$), i.e. when the jump coefficient $h(.)$ is more regular than the diffusion coefficient $\sigma(.)$, one obtains a convergence rate of order $n^{-\frac12 \rho\wedge \eta} + n^{-(\gamma-\frac12)}$ for any $\alpha_\nu \in [1,2]$ which is the same as the one established in the Brownian setting by Gy\"ongy and R\'asonyi \cite{Gyongy:rasonyi}. Otherwise, if $\gamma>\eta$, the rate is worse than the one corresponding to the Brownian setting. Moreover, it becomes poorer and poorer as $\alpha_{\nu}$ increases, starting from $n^{-\frac12 \rho\wedge \eta} + n^{- (\eta - \frac{\eta}{2\gamma})}$, which is less than the rate of the Brownian setting, if $\alpha_\nu \in [1, 2 (1-\gamma)/(1-\eta)]$ down to $n^{-\frac12 \rho\wedge \eta} + n^{-(\eta-\frac{1}{2-\delta})} \approx n^{-\frac12 \rho\wedge \eta} + n^{-(\eta- \frac12)}$, for any $\delta \in (0,2\eta-1)$, if $\alpha_\nu=2$. We emphasize that the latter rate of convergence should not come as a big surprise since, in the case of stable-like L\'evy measure, it formally corresponds to the rate of the Brownian setting with a jump coefficient $h$ being $\eta$-H\"older continuous, $\eta\in (1/2, 1]$.
\end{Rem}
\vskip 4pt

\begin{proof}
For $\varepsilon \in (0,1)$ and $\delta \in (0,1)$, to be chosen later, we apply the Yamada-Watanabe function $\phi_{\delta, \varepsilon}$ to the difference $Y^{n, i} := X^{i,N} - X^{n, i, N}$. Employing \eqref{phi3} and then It\^o's formula, we get
\begin{equation}
\label{ito:formula:strong:convergence:rate}
|Y^{n, i}_t| \leq \varepsilon+ \phi_{\delta, \varepsilon}(Y^{n, i}_t) = \varepsilon + {M}_t^{i, n, \delta,\varepsilon} +{I}_t^{i, n, \delta,\varepsilon} +{J}_t^{i, n, \delta,\varepsilon} +{K}_t^{i, n, \delta,\varepsilon},
\end{equation}

 \noindent where we set
\begin{align*}
	{M}_t^{i, n, \delta,\varepsilon}
	:=& \int_{0}^{t} \phi_{\delta,\varepsilon}'(Y^{n,i}_{s}) \, (\sigma(X^{i, N}_{s}) - \sigma(X^{n, i, N}_{\eta(s)})) \, dW^{i}_{s} \\
	& \quad +\int_{0}^{t} \int_{0}^{\infty}\left\{\phi_{\delta,\varepsilon}(Y^{n, i}_{s-} + (h(X^{i,N}_{s-})-h(X^{n, i, N}_{\eta(s)}))z)-\phi_{\delta,\varepsilon}(Y^{n,i}_{s-})\right\}
	\widetilde{N}^{i}(ds,dz),\\
	{I}_t^{i, n, \delta,\varepsilon}
	:=& \int_{0}^{t} \phi_{\delta,\varepsilon}'(Y^{n,i}_s) (b(X^{i,N}_{s},\mu^{N}_s)-b(X_{\eta(s)}^{n, i, N},\mu^{n,N}_{\eta(s)}))ds,
	\quad\\
	{J}_t^{i, n, \delta,\varepsilon}
	:=& \frac{1}{2} \int_{0}^{t} \phi_{\delta,\varepsilon}''(Y^{n, i}_{s}) |\sigma(X^{i, N}_{s}) - \sigma(X^{n, i, N}_{\eta(s)})|^2ds,\\
	{K}_t^{i,n, \delta,\varepsilon}
	:=&
	\int_{0}^{t} \int_{0}^{\infty}
	\Big\{
	\phi_{\delta,\varepsilon}(Y^{n, i}_{s-} + (h(X^{i,N}_{s-})-h(X^{n, i, N}_{\eta(s)}))z) - \phi_{\delta,\varepsilon}(Y^{n,i}_{s-}) -  (h(X^{i,N}_{s-})-h(X^{n, i, N}_{\eta(s)}))z \phi_{\delta,\varepsilon}'(Y^{n,i}_{s-})
	\Big\}
	\nu(dz)ds.
\end{align*}

Once again, similarly to the proof of Theorem \ref{strong:existence:uniqueness}, one needs to employ a localization technique using a sequence $(\tau_m)_{m\geq1}$ so that $(M^{i, n, \delta, \varepsilon}_{t\wedge \tau_m})_{t\geq0}$ is an $L^{1}(\mathbb{P})$-martingale and then pass to the limit as $m\uparrow \infty$ using the fact that $Y^{n, i}$ is right-continuous and Fatou's lemma. Since this procedure is standard, we omit it for sake of simplicity. We now quantify the contribution of the terms ${I}_t^{i, n, \delta,\varepsilon}, \, {J}_t^{i, n, \delta,\varepsilon}$ and ${K}_t^{i,n, \delta,\varepsilon}$. For the rest of the proof, we denote by $C$ a positive constant that may change from line to line but depends neither on $n$ nor $N$. 

For the first term associated to the difference of the drift, we make use of assumption \ref{Ass_1} (ii)
	\begin{align*}
	I_t^{i, n,\delta,\varepsilon}
	& = \int_{0}^{t} \phi_{\delta,\varepsilon}'(Y^{n,i}_s) (b(X^{i,N}_{s},\mu^{N}_s)-b(X_{\eta(s)}^{n,i,N},\mu^{n,N}_{\eta(s)}))ds\\
	& =  \int_{0}^{t} \phi_{\delta,\varepsilon}'(Y^{n,i}_s) (b(X_{s}^{i,N},\mu^N_s)- b(X^{n,i,N}_{s},\mu^{n,N}_{\eta(s)}))ds  + \int_{0}^{t} \phi_{\delta,\varepsilon}'(Y^{n,i}_s) (b(X_{s}^{n, i, N},\mu^{n, N}_{\eta(s)})- b(X_{\eta(s)}^{n, i, N},\mu^{n,N}_{\eta(s)}))ds \\
	&\leq [b]_L \int_{0}^{t} |Y^{n,i}_s| ds + [b]_L  \int^t_0 W_1(\mu^N_s, \mu^{n,N}_{\eta(s)})ds + \kappa \int_{0}^{t} |X^{n, i, N}_s - X^{n, i, N}_{\eta(s)}|^\rho ds .\nonumber
	\end{align*}
	
From the very definition of the Wasserstein distance and the triangle inequality
\begin{align*}
W_1(\mu^N_s, \mu^{n,N}_{\eta(s)})  \leq \frac{1}{N}\sum_{j=1}^{N} |Y^{n, j}_s| + |X^{n, j, N}_s - X^{n, j,N}_{\eta(s)}|
\end{align*}

\noindent so that
\begin{align}
I_t^{i, n, \delta, \varepsilon}
	&\leq C\left\{ \int_{0}^{t} |Y^{n,i}_s| ds+ \int_0^t \frac{1}{N}\sum_{j=1}^{N} |Y^{n, j}_s| \, ds  + \int_{0}^{t} \left[ |X^{n, i, N}_s - X^{n, i, N}_{\eta(s)}| + |X^{n, i, N}_s - X^{n, i, N}_{\eta(s)}|^\rho \right] ds\right\}. \label{first:bound:drift:term}
\end{align}


	To estimate $J^{i, n,\delta, \varepsilon}_t$, we make use of assumption 2.1 (iii) and \eqref{phi4} so that 
	\begin{align}
	J_t^{i, n,\delta,\varepsilon}
	&\leq 2 \int_{0}^{t}  \phi_{\delta,\varepsilon}''(Y^{n,i}_s) |\sigma(X_{s}^{i,N})-\sigma(X^{n,i,N}_{s})|^2ds
	+2 \int_{0}^{t} \phi_{\delta,\varepsilon}''(Y^{n,i}_s) |\sigma(X_{s}^{n,i,N})-\sigma(X^{n,i,N}_{\eta(s)})|^2 ds \notag\\
	&\leq C \bigg[\int_{0}^{t} \frac{\1_{[\varepsilon/\delta,\varepsilon]}(|Y^{n,i}_s|) |Y^{n,i}_s|^{2\gamma}}{|Y^{n,i}_s|\log \delta}ds
	+  \int_{0}^{t} \frac{\1_{[\varepsilon/\delta,\varepsilon]}(|Y^{n,i}_s|) |X^{n,i,N}_{s}-X^{n,i,N}_{\eta(s)}|^{2\gamma}}{|Y^{n,i}_s|\log \delta}ds\bigg] \notag \\
	&\leq C \bigg[\frac{\varepsilon^{2\gamma-1}}{\log \delta}
	+ \frac{\delta}{\varepsilon \log \delta}  \int_{0}^{t} |X^{n,i,N}_{s}-X^{n,i,N}_{\eta(s)}|^{2\gamma}ds\bigg].\label{EP_7}
	\end{align}	
	
	We then decompose $K_t^{i, n,\delta,\varepsilon}$ as the sum of two terms, namely
	\begin{align*}
	K_t^{i, n,\delta,\varepsilon}
	=K_t^{i, n,\delta,\varepsilon,1}+K_t^{i, n,\delta,\varepsilon,2}
	\end{align*}
	where $K_t^{i, n,\delta,\varepsilon,1}$ and $K_t^{i, n,\delta,\varepsilon,2}$ are given by
	\begin{align*}
	K_t^{i, n,\delta,\varepsilon,1}
	&:=
	\int_{0}^{t} \int_{0}^{\infty} \textbf{1}_{\left\{ Y^{n, i}_s \neq 0 \right\}}
	\Big\{
	\phi_{\delta,\varepsilon}(Y^{n,i}_s+\{h(X_{s}^{i,N})-h(X_{s}^{n,i,N})\}z)-\phi_{\delta,\varepsilon}(Y_{s}^{n,j})\\
	& \quad  -\{h(X_{s}^{i,N})-h(X_{s}^{n,i,N})\}z \phi_{\delta,\varepsilon}'(Y^{n,i}_s)
	\Big\}
	\nu(dz)ds,\\
	K_t^{i, n,\delta,\varepsilon,2}
	&:=
	\int_{0}^{t} \int_{0}^{\infty}
	\Big\{
	\phi_{\delta,\varepsilon}(Y^{n,i}_s+\{h(X_{s}^{i,N})-h(X_{\eta(s)}^{n,i,N})\}z)-\phi_{\delta,\varepsilon}(Y^{n,i}_s+\{h(X_{s}^{i,N})-h(X_{s}^{n,i,N})\}z)\\
	& \quad -\{h(X_{s}^{n,i,N})-h(X_{\eta(s)}^{n,i,N})\}z \phi_{\delta,\varepsilon}'(Y^{n,i}_s)
	\Big\}
	\nu(dz)ds
	\end{align*}

	\noindent where for the first integral we used the fact if $Y^{n ,i}_s=0$ then $h(X_{s}^{i,N})-h(X_{s}^{n, i, N})=0$, for any $s \in [0,t]$. Now, let $y=Y^{n,i}_s$ and $x=h(X_{s}^{i,N})-h(X_{s}^{n,i,N})$. If $y x> 0$ we can apply Lemma \ref{key_lem_0}. For any $u>0$, we get 
	\begin{align}
	&\int_{0}^{\infty}
	\left\{
	\phi_{\delta,\varepsilon}(Y^{n,i}_s+\{h(X^{i, N}_{s})-h(X_{s}^{n,i ,N})\}z)-\phi_{\delta,\varepsilon}(Y^{n,i}_s)
	-\{h(X^{i, N}_{s})-h(X_{s}^{n, i, N})\}z \phi_{\delta,\varepsilon}(Y^{n,i}_s)
	\right\}
	\nu(dz) \notag\\
	&\leq
	\frac{2|h(X^{i, N}_{s})-h(X_{s}^{n, i, N})|^2 \1_{(0,\varepsilon]}(|Y^{n,i}_s|)}{|Y^{n,i}_s| \log \delta} \int_{0}^{u} z^2 \nu(dz)
	+ 2|h(X^{i, N}_{s})-h(X_{s}^{n, i, N})| \1_{(0,\varepsilon]}(|Y^{n,i}_s|) \int_{u}^{\infty} z \nu(dz)\notag\\
	& \leq C\bigg[\frac{ |Y^{n,i}_s|^{2 \eta} \1_{(0,\varepsilon]}(|Y^{n,i}_s|)}{|Y^{n,i}_s| \log \delta} \int_{0}^{u} z^2 \nu(dz)
	+  |Y^{n,i}_s|^{\eta} \1_{(0,\varepsilon]}(|Y^{n,i}_s|) \int_{u}^{\infty} z \nu(dz)\bigg] \notag \\
	& \leq C\bigg[  \frac{\varepsilon^{2\eta-1}}{\log \delta} \int_{0}^{u} z^2 \nu(dz)
	+ \varepsilon^{\eta} \int_{u}^{\infty} z \nu(dz)\bigg]\label{EP_2}
	\end{align}
	where in the second last inequality, we used the fact that $h$ is an $\eta$-H\"older continuous function.
Next, by picking $u = \log(\delta)\varepsilon^{1-\eta}$, the quantity appearing on the right-hand side of \eqref{EP_2} can be further bounded by
	\begin{align*}
	C \bigg[\frac{1}{\log(\delta)^{\alpha_1-1}} \varepsilon^{2 \eta-1-(1-\eta)({\alpha}-2)} I^{\eta, \alpha_1}_{\varepsilon, \delta}
	+ \frac{1}{\log(\delta)^{\alpha_1-1}} \varepsilon^{\eta-(1-\eta)({\alpha_1}-1)}J^{\eta, \alpha_1}_{\varepsilon, \delta}\bigg]
	\leq C \frac{\varepsilon^{1-{\alpha_1}(1-\eta)}}{\log(\delta)^{\alpha_1-1}}  \big[I^{\eta, \alpha_1}_{\varepsilon, \delta} + J^{\eta, \alpha_1}_{\varepsilon, \delta}\big] \notag,
	\end{align*}
	where, for sake of clarity, for any $\alpha >\alpha_\nu$ and any $\eta \in (1-\frac{1}{\alpha_\nu},1]$, we introduced the quantities:
\begin{align*}
I^{ \eta, \alpha}_{\varepsilon, \delta} := [\log(\delta) \varepsilon^{1-\eta}]^{\alpha-2}\int_{0}^{\log(\delta) \varepsilon ^{1-\eta}} z^2 \nu(dz)\quad \mathrm{and} \quad J^{\eta, \alpha}_{\varepsilon, \delta} := [\log(\delta) \varepsilon^{1-\eta}]^{\alpha-1}\int_{\log(\delta) \varepsilon^{1-\eta}}^{\infty} z \nu(dz).
\end{align*}

We select $\alpha_1$ such that $\alpha_\nu< \alpha_1 < 1/(1-\eta)$. Observe that this choice is admissible since $\eta > 1-1/\alpha_\nu$.  Next, we note that from \eqref{left:right:tail:asymptotics}, one has $\sup_{\varepsilon \in (0,1)} \left\{I^{\eta, \alpha_1}_{\varepsilon, \delta} + J^{\eta, \alpha_1}_{\varepsilon, \delta} \right\}< \infty$ (if $\eta = 1$, we set $\delta = 2$ and bound the quantity $I^{\eta, \alpha_1}_{\varepsilon, \delta} + J^{\eta, \alpha_1}_{\varepsilon, \delta}$ by $2 \int_{0}^{\infty} z\wedge z^2 \nu(dz)$). 

%
On the other hand, if $y x < 0$, using assumption \ref{Ass_1} (iv) and following analogous computations as those used to derive \eqref{second:part:Kt:1}, we obtain
\begin{align*}
& \int_{0}^{\infty}\left\{ \phi_{\delta,\varepsilon}(Y^{n,i}_s+\{h(X^{i, N}_{s})-h(X_{s}^{n,i ,N})\}z)-\phi_{\delta,\varepsilon}(Y^{n,i}_s)
	-\{h(X^{i, N}_{s})-h(X_{s}^{n, i, N})\}z \phi_{\delta,\varepsilon}(Y^{n,i}_s)
	\right\}
	\nu(dz) \\
	& \leq  C  \left\{ \frac{\varepsilon}{\log(\delta)}  +  |Y^{n,i}_s| \right\}.
\end{align*}

From the above computations, we thus deduce that for any $\alpha_1 \in (\alpha_\nu, 1/(1-\eta))$
\begin{equation}
K_t^{i, n,\delta,\varepsilon,1} \leq C \bigg( \frac{\varepsilon}{\log(\delta)} + \frac{\varepsilon^{1-{\alpha_1}(1-\eta)}}{\log(\delta)^{\alpha_1-1}} +  \int^t_0 |Y^{n,i}_s| ds \bigg)\label{EP_51}
\end{equation}
\noindent for some positive constant $C$ independent of $n$ and $N$.

To estimate $K^{i, n,\delta, \varepsilon, 2}_t$, we apply Lemma \ref{key_lem12} with $y=Y^{n,i}_s,
		x=h(X_{s}^{i,N})-h(X_{\eta(s)}^{n,i,N})$ and $x'=h(X_{s}^{i,N})-h(X_{s}^{n,i,N})$.
Note that if $yx' < 0$, by assumption \ref{Ass_1} (iv), one has $0<-\sign(y)x' = |x'| \leq [h]_L |y|$. Therefore, for any $\alpha_2 \in (\alpha_\nu, 2]$, we obtain from Lemma \ref{key_lem12}
	\begin{align}
	K_t^{i, n,\delta,\varepsilon,2} 
	&\leq C\int^t_0\bigg\{|h(X_{s}^{n,i,N})-h(X_{\eta(s)}^{n,i,N})|^{\alpha_2} \frac{\delta}{\varepsilon \log \delta}  + |h(X_{s}^{n, i,N})-h(X_{\eta(s)}^{n,i,N})| \nonumber \\
	& \quad + \frac{1}{\log(\delta)}|h(X_{s}^{n,i,N})-h(X_{\eta(s)}^{n,i,N})| + |h(X_{s}^{n,i,N})-h(X_{\eta(s)}^{n,i,N})|	\nonumber \\
	& \quad + |h(X_{s}^{n,i,N})-h(X_{\eta(s)}^{n,i,N})| \bigg[\frac{1}{\varepsilon^{1-\eta} \log \delta} \int_0^{u} z^2 \nu(dz) + \int_{u}^{\infty} z\nu(dz)\bigg] \bigg\} ds \nonumber \\
	&\leq C \int^t_0\bigg\{|X_{s}^{n, i, N}-X_{\eta(s)}^{n, i, N}|^{\alpha_2\eta}  \frac{\delta}{\varepsilon \log \delta}  + |X_{s}^{n, i,N}- X_{\eta(s)}^{n, i, N}|^{\eta} + \frac{1}{\log(\delta)}|X_{s}^{n, i, N}-X_{\eta(s)}^{n, i, N}|^\eta \nonumber \\
	& \quad +  |X_{s}^{n, i, N}-X_{\eta(s)}^{n, i, N}|^\eta	 +  |X_{s}^{n,i,N}-X_{\eta(s)}^{n,i,N}|^\eta \bigg[\frac{1}{\varepsilon^{1-\eta} \log \delta} \int_0^u z^2 \nu(dz) + \int_u^{\infty} z \nu(dz)\bigg] \bigg\} ds. \label{EP_53}
\end{align}
\noindent 	By taking the expectation in \eqref{ito:formula:strong:convergence:rate}, \eqref{first:bound:drift:term}, \eqref{EP_7}, \eqref{EP_2}, \eqref{EP_51} and \eqref{EP_53}, for any $t\in [0,T]$, we obtain from Lemma \ref{EM_esti_1}, 
	\begin{align*}
	\e[|Y_t^{n,i}|]
	& \leq \varepsilon
	+\e[{I}_t^{n,\delta,\varepsilon}]
	+\e[{J}_t^{n,\delta,\varepsilon}]
	+\e[K_t^{n,\delta,\varepsilon}] \notag\\
	&\leq \varepsilon 
	+C \bigg[ \int_{0}^{t} \e[|Y_{s}^{n,i}|] ds + \frac{1}{n^{1/2}} + \frac{1}{n^{\rho/2}}
	+\frac{\varepsilon^{2\gamma-1}}{\log \delta} + \frac{\delta}{\varepsilon \log(\delta)}\frac{1}{n^{\gamma}}
	+ \frac{\varepsilon}{\log(\delta)}+ \frac{\varepsilon^{1-{\alpha_1}(1-\eta)} }{\log(\delta)^{\alpha_1-1}} \notag \\
	& \quad + \frac{1}{n^{\alpha_2 \eta/2}} \frac{\delta}{\varepsilon \log(\delta)} + \frac{1}{n^{\eta/2}} + \frac{1}{n^{\eta/2}}\frac{1}{\log(\delta)} + \frac{1}{n^{\eta/2}}\bigg[\frac{1}{\varepsilon^{1-\eta} \log \delta} \int_0^u z^2 \nu(dz) + \int_u^{\infty} z \nu(dz)\bigg] \bigg] \notag
	\end{align*}
where, due to integrability constraints, we require that $ 2\gamma< \beta_\nu $ and select $\alpha_2 \in (\alpha_\nu , \beta_\nu/\eta)\cap (\alpha_{\nu} , 2]$. Note that if $\alpha_\nu =2$ then one may take $\alpha_2 =2$. Then, 
	by using Gr\"onwall's inequality
	\begin{align}
 e^{-CT} \e[|Y_t^{n, i}|] 
 		&\leq
		\varepsilon
		+ C \bigg[ \frac{1}{n^{1/2}} + \frac{1}{n^{\rho/2}}
	+\frac{\varepsilon^{2\gamma-1}}{\log \delta} + \frac{\delta}{\varepsilon \log(\delta)}\frac{1}{n^{\gamma}}
	+ \frac{\varepsilon}{\log(\delta)}+ \frac{\varepsilon^{1-{\alpha_1}(1-\eta)} }{\log(\delta)^{\alpha_1-1}} \label{EP_13} \\
	& \quad + \frac{1}{n^{\alpha_2 \eta/2}} \frac{\delta}{\varepsilon \log(\delta)} + \frac{1}{n^{\eta/2}} + \frac{1}{n^{\eta/2}}\frac{1}{\log(\delta)} + \frac{1}{n^{\eta/2}}\bigg[\frac{1}{\varepsilon^{1-\eta} \log \delta} \int_0^u z^2 \nu(dz) + \int_u^{\infty} z \nu(dz)\bigg] \bigg]. \notag
	\end{align}
	To optimize the above bound, we proceed as follows.
	
	 Let us first consider the case $\gamma \in (1/2,\beta_\nu /2)$. In this case, we select $\delta =2$ and $u = \varepsilon^{1-\eta}$. We obtain
	\begin{align*}
	\e[|Y_t^{n, i}|]
	& \leq C
	\left\{
	\varepsilon
	+ \frac{1}{n^{1/2}} +\frac{1}{n^{\rho/2}}  
	+\varepsilon^{2\gamma-1} +\frac{1}{\varepsilon n^{\gamma}}
	+\varepsilon^{1-{\alpha_1}(1-\eta)}
	+\frac{1}{\varepsilon n^{\alpha_2\eta/2}} + \frac{1}{n^{\eta/2}} + \left(\frac{u^{2-\alpha_3}}{\varepsilon^{1-\eta}} + u^{1-\alpha_3}\right)\frac{1}{n^{\eta/2}} 
	\right\},\\
	& \leq C
	\left\{
	\varepsilon
	+ 	\frac{1}{n^{1/2}} +\frac{1}{n^{\rho/2}} 
	+\varepsilon^{2\gamma-1}+\frac{1}{\varepsilon n^{\gamma}}
	+\varepsilon^{1-{\alpha_1}(1-\eta)}
	+ \frac{1}{\varepsilon}\frac{1}{n^{\alpha_2\eta/2}} + \frac{1}{n^{\eta/2}} + \frac{\varepsilon^{(1-\eta)(2- \alpha_3)}}{\varepsilon^{1-\eta}}\frac{1}{n^{\eta/2}} 
	\right\}
	\end{align*}
\noindent where, for the first inequality, we used the fact that for $\varepsilon\geq 0$ sufficiently small, by \eqref{left:right:tail:asymptotics}, we have $\big[\varepsilon^{(1-\eta)(\alpha_3-2)} \int_0^{\varepsilon^{1-\eta}} z^2 \nu(dz) + \varepsilon^{(1-\eta)(1-\alpha_3)} \int_{\varepsilon^{1-\eta}}^{\infty} z \nu(dz)\big] < \infty$ for any $\alpha_3 > \alpha_\nu$.
We now regroup the above terms into
\begin{align*}
	\e[|Y_t^{n, i}|]	& \leq C
	\left\{\left[\frac{1}{n^{1/2}}
		+\frac{1}{n^{\rho/2}}+  \frac{1}{n^{\eta/2}}\right]
	+\left[\varepsilon+\varepsilon^{2\gamma-1}
	+\varepsilon^{1-{\alpha_1}(1-\eta)}\right]
	+\frac{1}{\varepsilon} \left[\frac{1}{n^{\gamma}} + \frac{1}{n^{\alpha_2\eta/2}}\right]  + \frac{1}{\varepsilon^{(\alpha_3-1)(1-\eta)}}\frac{1}{n^{\eta/2}} 
	\right\}\\
	& \leq C
	\left\{\frac{1}{n^{\zeta_1}}
	+ \frac{1}{n^{q\zeta_2}}
	+\frac{1}{n^{\zeta_3 - q}} + \frac{1}{n^{\zeta_4 - \zeta_5q}} 
		\right\}
	\end{align*}
where we have set $\varepsilon=n^{-q}$ and
\begin{align*}
\zeta_1 = \frac{1}{2}(\rho\wedge \eta), \quad \zeta_2 = (2\gamma-1) \wedge (1-\alpha_1(1-\eta)), \quad \zeta_3= \gamma \wedge \frac{\alpha_2\eta}{2}, \quad \zeta_4 = \frac{\eta}{2}, \quad \zeta_5 = (\alpha_3-1)(1-\eta)
\end{align*}

\noindent with the constraints: $\alpha_1 \in (\alpha_\nu, 1/(1-\eta))$, $\alpha_2 \in (\alpha_\nu, \beta_\nu/\eta) \cap (\alpha_\nu , 2]$ and $\alpha_3> \alpha_\nu$, where we recall that if $\alpha_\nu =2$ then $\alpha_2 =2$.
We now need to pick the optimal $q$ which maximises the above rate of convergence. By linear programming, the optimal $q$ is the minimum of the solution to $q\zeta_2 = \zeta_3 - q$ and $q\zeta_2 = \zeta_4 -q\zeta_5$, that is, the optimal $q$ is given by
\begin{align*}
q^* = \min\left(\frac{\zeta_3}{\zeta_2 + 1} , \frac{\zeta_4}{\zeta_2 + \zeta_5}\right).
\end{align*}
From the above computations, we deduce 
$$
\max_{1\leq i \leq N}\sup_{0\leq t \leq T}\e[|X^{i,N}_t - X^{n, i, N}_t|] \leq \frac{C}{n^{\zeta_2 q^{*}}}. 
$$
Let us now discuss the optimal value of the parameters $(\alpha_1, \alpha_2, \alpha_3)$ which maximizes the above rate of convergence. We distinguish two different cases: $\alpha_\nu < 2 (1-\gamma)/(1-\eta)$ and $\alpha_\nu \geq 2 (1-\gamma)/(1-\eta)$. 
In the first case $\alpha_\nu < 2 (1-\gamma)/(1-\eta)$, we select $\alpha_1 = 2 (1-\gamma)/(1-\eta)$ so that $\zeta_2= 2\gamma-1$. We now consider the two following sub-cases: $\eta\leq \gamma$ (or equivalently $2(1-\gamma)/(1-\eta)\leq 2$) and $\eta > \gamma$ (or equivalently $2(1-\gamma)/(1-\eta)> 2$).  In the first sub-case $\eta\leq \gamma$ one has $2\eta \leq 2 \gamma < \beta_\nu$ which implies $2 \leq 2\gamma/\eta < \beta_\nu /\eta$. We thus take $\alpha_2 = 2$ so that $\zeta_3=\eta$ which in turn implies $\zeta_3/(\zeta_2+1) = \eta/(2\gamma)$. We finally pick $\alpha_3 = \alpha_1$ so that $\zeta_2 + \zeta_5 \leq \eta$ which in turn implies that $\zeta_4/(\zeta_2+\zeta_5)\geq 1/2$. Hence, if $\eta\leq \gamma$, one has $\zeta_2 q^*=\eta - \eta/(2\gamma)$.
Now, in the second sub-case $\eta > \gamma$, we can pick $\alpha_2 \in  [2\gamma/\eta , 2 \wedge (\beta_\nu/\eta))$ so that $\zeta_3=\gamma$ which in turn implies $\zeta_3/(\zeta_2+1) =1/2$. With the same choice of $\alpha_3$, we thus obtain $\zeta_2 q^*=\gamma - 1/2$. To sum up, if $\alpha_\nu \leq 2 (1-\gamma)/(1-\eta)$, one obtains a convergence rate of order $n^{-(\gamma-\frac12)}$ if $\gamma \leq \eta$ and $n^{-(\eta-\frac{\eta}{2\gamma})}$ if $\eta < \gamma$.


We now turn our attention to the second case, namely $\alpha_\nu \geq 2 (1-\gamma)/(1-\eta)$. Note that since $\alpha_\nu \in [1,2]$ a necessary condition is $2 (1-\gamma)/(1-\eta) \leq 2$ which is equivalent to $\eta \leq \gamma$. Then, one has $\zeta_2+1 = 2-\alpha_1(1-\eta)$. Since $\beta_\nu > 2 \gamma \geq 2 \eta$, we select $\alpha_2=2$ so that $\zeta_3 = \gamma \wedge \eta = \eta$. The maps $\alpha_1 \mapsto \zeta_2(\alpha_1) \zeta_3/(\zeta_2(\alpha_1)+1)$ and $\alpha_1 \mapsto \zeta_2(\alpha_1) \zeta_4/(\zeta_2(\alpha_1)+\zeta_5)$ are decreasing so that we select $\alpha_1$ as small as possible, namely $\alpha_1= \alpha_\nu + \delta/(1-\eta)$, with $\delta \in (0, 1-(1-\eta)\alpha_\nu)$ and we denote this choice of $\alpha_1$ by $\alpha^{+}_\nu$ in order to save notation. This in turn yields $\zeta_2 \zeta_3/ (\zeta_2+1) = \eta(1-\alpha_\nu^{+}(1-\eta))/(2-\alpha_\nu^+(1-\eta))$. We finally select $\zeta_5=(\alpha_3-1)(1-\eta)=\frac{1}{2}(\zeta_2+1)-\zeta_2$, that is, $\alpha_3 = 1+\frac12 \alpha_\nu^{+}$. Note that this choice is admissible since $\alpha_3 > \alpha_\nu$ is equivalent to $2+\delta/(1-\eta)>\alpha_\nu$. Hence, one has $\zeta_2 \zeta_4/(\zeta_2+\zeta_5)= \zeta_2 \zeta_3/(\zeta_2+1)$. We thus obtain a convergence rate of order 
$$
n^{- \eta\frac{(1-\alpha_\nu^{+}(1-\eta))}{2-\alpha_\nu^+(1-\eta)}}= n^{-\eta(1- \frac{1}{2-\alpha_\nu(1-\eta)-\delta})}
$$
\noindent for any $\delta \in (0, 1-(1-\eta)\alpha_\nu)$.

We conclude by investigating the case $\gamma = 1/2$. Coming back to \eqref{EP_13}, under the constraints $\alpha_1 \in (\alpha_\nu, 1/(1-\eta))$, $\alpha_2 \in (\alpha_\nu, \beta_\nu/\eta) \cap (\alpha_\nu , 2]$ and $\alpha_3> \alpha_\nu$, we get
	\begin{align*}
	\max_{1\leq i\leq N}\e[|Y_t^{n, i}|]  & \leq C
	\left\{
	\left(1+ \frac{1}{\log(\delta)}\right)\varepsilon
	+ 	\frac{1}{n^{1/2}}
	+\frac{\varepsilon^{2\gamma-1}}{\log(\delta)}
	+ \frac{\varepsilon^{1-{\alpha_1}(1-\eta)}}{(\log(\delta))^{\alpha_1-1}}
	+\frac{1}{n^{\rho/2}} 
	+\frac{\delta}{\varepsilon \log(\delta) }\frac{1}{n^{\gamma}}\right.\\
	& \quad  \left.+ \frac{\delta}{\varepsilon \log(\delta)} \frac{1}{n^{\alpha_2\eta/2}} + \left(1+ \frac{1}{\log(\delta)}\right)\frac{1}{n^{\eta/2}} + u^{2-\alpha_3}\left(\frac{1}{\varepsilon^{1-\eta}\log(\delta)} + \frac{1}{u}\right)\frac{1}{n^{\eta/2}} 
	\right\},
	\end{align*}
\noindent for any $u\in (0, r)$, $r>0$ being fixed, we have again used the fact that by \eqref{left:right:tail:asymptotics} we have $\sup_{u \in (0, r)} \big[u^{\alpha_3-2} \int_0^{u} z^2 \nu(dz) + u^{1-\alpha_3} \int_{u}^{\infty} z \nu(dz)\big] < \infty$.
To proceed, we pick $u = u_n = \varepsilon^{1-\eta}\log(\delta)$, $\varepsilon=n^{-q}$ and $\delta =n^{p}$ with $p, q>0$. Note that taking $n$ large enough, one has $u_n \in (0,r)$. We then select the couple $(p, q) \in (\R_+)^2 \backslash{(0,0)}$ such that $p+q \leq \frac{\alpha_2\eta}{2} \wedge \gamma$ and $\frac{\eta}{2} - q(1-\eta)(\alpha_3-1) > 0$. For instance, if $\beta_\nu \leq 2 \eta$, as previously done choose $\alpha_2$ large enough so that $2\gamma < \alpha_2 \eta < \beta_\nu$, any $\alpha_1 \in (\alpha_\nu, (1-\eta)^{-1})$, $\alpha_3 = 1 + (1-\eta)^{-1}\gamma^{-1} \eta$ and then select $p= q < \gamma/2$. If $\beta_\nu > 2\eta$, choose $\alpha_2 = 2$, any $\alpha_1 \in (\alpha_\nu, (1-\eta)^{-1})$, $\alpha_3 = 1 + (1-\eta)^{-1}(\gamma \wedge \eta)^{-1} \eta$ and then select $p= q < (\gamma\wedge \eta)/2$. Hence, with this choice of parameters, we obtain
	\begin{align*}
	\max_{1\leq i \leq N}\e[|Y_t^{n,i}|]
	\leq \frac{C}{\log(n)}
	\end{align*}
\noindent for some positive constant independent of $n$ and $N$. The proof is now complete.
\end{proof}

\section{Positivity of the solution and applications}
Finally, in this section, we briefly mention some potential applications and conditions under which the solution of the McKean-Vlasov SDE \eqref{mckean:sde} is positive. A possible application we have in mind is the mean-field extension of the following equation
\begin{gather*}
X_t = \xi + \int_0^t b(X_s) ds + \int_0^t \sigma_1\sqrt{X_s}dW_s + \int_0^t \sigma_2\sqrt[\alpha]{X_{s-}}dZ_s,
\end{gather*}
where $W$ is a Brownian motion and $Z$ is a spectrally positive $\alpha$-stable process with index $\alpha \in (1,2]$. In particular, when $b(x) = (a-kx)$ where $a$ and $k$ are some positive constants, the above equation have recently attracted large attention in financial modelling. We refer to the works of Jiao et al. \cite{JMS, JMSS, JMSZ} for the modelling of sovereign interest rate, electricity prices and stochastic volatility. For multi-curve term structure models we refer to Fontana et al. \cite{FGS}. 

The additional introduction of a dependence with respect to the law in the drift $b$ is motivated by the fact that in the modelling of large credit portfolio or more generally of large financial markets, one tractable way to understand systemic risk can be done through the lens of interacting particle systems. Indeed, it has now become more and more important to capture not only self-exciting effects but also contagious effects, i.e. how the default of one firm affects another firm with the aim of shedding light on macro-aspects of how crises emerge in large interconnected systems. Researches in this direction have been considered e.g. by Giesecke et al. \cite{GSS, GSSS}, where the authors have introduced a model for the default intensities of $N$ firms, through the following system of interacting particles
\begin{align*}
d\lambda^i_t &= (a-k\lambda^i_t)dt + \sigma_1\sqrt{\lambda^i_t}dW_t^i + \sum_{j\neq i}^N c^{N}_j dH^j_t + \sigma_2\lambda^i_t dX_t
\end{align*}

\noindent where $X_t$ is some common risk factor which affects all firms, $W=(W^i)_{i=1,\dots, N}$ is an $N$-dimensional Brownian motion independent of $X$, $(c^{N}_j)_{1\leq j \leq N}$ are positive constants,
\begin{align*}
\tau^i & :=  \inf \{ t :  \int^t_0 \lambda_s^i ds > U^i \},\quad \mathrm{and} \quad H^i_t  := \mathbf{1}_{\{ \tau^i \leq t\}}
\end{align*}
\noindent with the random variables $(U^i)_{1\leq i \leq N}$, being i.i.d. with exponential distribution.  

The contagion effect is here modeled through the introduction of the sum $\sum_{j\neq i}^N c_j dH^j_t$ which induces a change in the default intensity of the $i$-th firm when other firms default. We believe that the mean-field dynamics presented and analyzed in this work could be used to develop extensions of the model proposed in the aforementioned references by introducing positive jumps, interaction in the default intensities between the firms not only at the time of defaults, but also instantaneously in time through the drift term. Obviously this is not within the scope of the current paper. We postpone to future works for a more specific discussion of the stylized features of such models.

To conclude, we provide here some sufficient conditions on the coefficients under which the McKean-Vlasov SDE \eqref{mckean:sde} is non-negative.
\begin{Ass}\label{Ass_3}
For any fixed $(t, \mu) \in [0,\infty) \times \mathcal{P}_1(\mathbb{R})$ such that $\mu(\R_+)=1$, the coefficients $b(t,\cdot,\mu)$, $\sigma(\cdot)$ and $h(\cdot)$ satisfy, $\sigma(t, 0) = h(t, 0) =0$ and $b(t, 0, \mu)\geq 0$.
\end{Ass}

\begin{Prop}\label{positive:solutions:mckean:sde}
 Under the same assumptions as in Theorem \ref{strong:existence:uniqueness} and assumption \ref{Ass_3}, for any initial condition $\xi$ such that $\mathbb{P}(\xi \geq 0) =1$, the unique strong solution $X=(X_t)_{t\geq0}$ of the McKean-Vlasov SDE \eqref{mckean:sde} satisfies $\mathbb{P}(X_t \geq 0, \, \forall t\geq 0)=1$. 
\end{Prop}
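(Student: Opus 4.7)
The plan is to show that $\mathbb{E}[X_t^-]=0$ for every $t\geq 0$ by adapting the Yamada--Watanabe strategy of Theorem \ref{strong:existence:uniqueness}, this time applied to the negative part of the single process $X$ rather than to the difference of two solutions. I would introduce the one-sided smooth approximation $\Psi_{\delta,\varepsilon}(x):=\phi_{\delta,\varepsilon}(x^-)$. Since $\phi_{\delta,\varepsilon}$ together with its first two derivatives vanishes at $0$, one has $\Psi_{\delta,\varepsilon}\in C^2(\R)$ with $\Psi_{\delta,\varepsilon}'(x)=-\phi_{\delta,\varepsilon}'(x^-)\mathbf{1}_{x<0}$ and $\Psi_{\delta,\varepsilon}''(x)=\phi_{\delta,\varepsilon}''(x^-)\mathbf{1}_{x<0}$, and the estimate $x^-\leq \varepsilon+\Psi_{\delta,\varepsilon}(x)$ holds. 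Applying It\^o's formula to $\Psi_{\delta,\varepsilon}(X_t)$ --- the initial term $\Psi_{\delta,\varepsilon}(\xi)$ being zero since $\xi\geq 0$ a.s.\ --- and performing the standard localization and expectation, one obtains a decomposition
$\mathbb{E}[\Psi_{\delta,\varepsilon}(X_t)]=\mathbb{E}[I_t^{\delta,\varepsilon}]+\mathbb{E}[J_t^{\delta,\varepsilon}]+\mathbb{E}[K_t^{\delta,\varepsilon}]$
completely parallel to the one in the proof of Theorem \ref{strong:existence:uniqueness}.

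For the drift $I_t^{\delta,\varepsilon}$ I introduce the push-forward $\mu_s^+$ of $[X_s]$ under $x\mapsto x^+$, which is supported on $\R_+$; Assumption \ref{Ass_3} then yields $b(s,0,\mu_s^+)\geq 0$. Combined with Assumption \ref{Ass_1}(ii) and the bound $W_1(\mu_s,\mu_s^+)\leq \mathbb{E}[X_s^-]$, this produces on $\{X_s<0\}$ the estimate $b(s,X_s,\mu_s)\geq -[b]_L(X_s^-+\mathbb{E}[X_s^-])$, whence $\mathbb{E}[I_t^{\delta,\varepsilon}]\leq C\int_0^t \mathbb{E}[X_s^-]\,ds$. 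The Brownian contribution $J_t^{\delta,\varepsilon}$ is controlled exactly as in Theorem \ref{strong:existence:uniqueness}: using $\sigma(s,0)=0$ and the $\gamma$-H\"older regularity of $\sigma$ gives $|\sigma(s,X_s)|^2\leq [\sigma]_\gamma^2(X_s^-)^{2\gamma}$ on $\{X_s<0\}$, and the Yamada--Watanabe control of $\phi_{\delta,\varepsilon}''$ yields $\mathbb{E}[J_t^{\delta,\varepsilon}]\leq C\varepsilon^{2\gamma-1}/\log\delta$.

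The jump term $K_t^{\delta,\varepsilon}$ is the key step. Using $h(s,0)=0$ together with the $\eta$-H\"older regularity, $|h(s,X_{s-})|\leq [h]_\eta |X_{s-}|^\eta$; one then splits the $\nu(dz)$-integrand according to the sign of $X_{s-}\,h(s,X_{s-})$, invoking Assumption \ref{Ass_1}(iv) in the opposite-sign region, and applies the auxiliary Lemmas \ref{key_lem_0} and \ref{key_lem12} together with the tail asymptotics \eqref{left:right:tail:asymptotics} to obtain, for any $\alpha\in(\alpha_\nu,1/(1-\eta))$, the bound
\[
\mathbb{E}[K_t^{\delta,\varepsilon}]\leq C\bigl(\varepsilon/\log\delta+\varepsilon^{1-\alpha(1-\eta)}/\log(\delta)^{\alpha-1}+\textstyle\int_0^t \mathbb{E}[X_s^-]\,ds\bigr),
\]
exactly in the spirit of the pathwise uniqueness proof. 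Combining the three estimates, using $\mathbb{E}[X_t^-]\leq \varepsilon+\mathbb{E}[\Psi_{\delta,\varepsilon}(X_t)]$, applying Gr\"onwall's lemma and finally letting $\varepsilon\downarrow 0$ (with the usual choice of $\delta$) gives $\mathbb{E}[X_t^-]=0$ for every $t\geq 0$; the c\`adl\`ag property of $X$ then produces $\mathbb{P}(X_t\geq 0,\,\forall t\geq 0)=1$.

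The hard part is the jump integral in the region where $X_{s-}$ is already non-negative but the multiplicative factor $h(s,X_{s-})$ may be negative, which a priori would allow a single large atom of the Poisson measure to send $X$ below zero; here neither $\Psi_{\delta,\varepsilon}(X_{s-})$ nor $\Psi_{\delta,\varepsilon}'(X_{s-})$ provides any cancellation and one must rely entirely on the vanishing of $h$ at the origin and on the singular behaviour of $\nu$ near zero, encoded by the condition $\eta>1-1/\alpha_\nu$, to absorb these potentially dangerous contributions into the Gr\"onwall step, exactly as in the pathwise uniqueness argument.
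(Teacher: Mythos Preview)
Your approach is genuinely different from the paper's: the paper does not carry out any Yamada--Watanabe argument here, but instead freezes the law by using the Picard iterates $X^{(m)}$ from \eqref{approximation:mckean:sde}, applies Proposition~2.1 of Fu--Li \cite{Fu:Li} inductively to each linear SDE to get $X^{(m)}_t\ge 0$, and then passes to the limit (finally applying Fu--Li once more to the linearized equation with frozen law $[X_t]$). So your self-contained route via $\Psi_{\delta,\varepsilon}(x)=\phi_{\delta,\varepsilon}(x^-)$ is an interesting alternative.

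There is, however, a real gap precisely in what you call ``the hard part''. On the event $\{X_{s-}\ge 0,\ h(s,X_{s-})<0\}$ both $\Psi_{\delta,\varepsilon}(X_{s-})$ and $\Psi'_{\delta,\varepsilon}(X_{s-})$ vanish, so the compensator reduces to the nonnegative term
\[
\int_0^\infty \Psi_{\delta,\varepsilon}\bigl(X_{s-}+h(s,X_{s-})z\bigr)\,\nu(dz)
\;\le\;|h(s,X_{s-})|\int_{X_{s-}/|h(s,X_{s-})|}^{\infty} z\,\nu(dz).
\]
From Assumption~\ref{Ass_1}(iv) and $h(s,0)=0$ you only get $|h(s,X_{s-})|\le [h]_L X_{s-}$, hence $X_{s-}/|h(s,X_{s-})|\ge 1/[h]_L$ and the right-hand side is bounded by $C\,X_{s-}=C\,X_{s-}^{+}$. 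This contribution is controlled by $\int_0^t\mathbb{E}[X_s^{+}]\,ds$, \emph{not} by $\int_0^t\mathbb{E}[X_s^{-}]\,ds$, and therefore cannot be absorbed into a Gr\"onwall inequality for $\mathbb{E}[X_t^-]$. Neither the H\"older bound $|h(s,X_{s-})|\le [h]_\eta X_{s-}^{\eta}$ nor the condition $\eta>1-1/\alpha_\nu$ helps here: the latter governs the small-jump singularity of $\nu$, whereas the dangerous jumps are the large ones ($z>1/[h]_L$), and in any case the estimate still involves the positive part of $X$. The analogy with the pathwise-uniqueness proof breaks down because there the corresponding region produced a factor $|\Delta_{s-}|$, which is exactly the quantity being Gr\"onwalled; here it produces $X_{s-}^{+}$, which is the wrong sign.

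To make your argument go through you need the extra hypothesis $h(t,x)\ge 0$ for $x\ge 0$ (then the ``hard part'' is empty), which is exactly what the Fu--Li framework requires and what all the examples in Section~3 satisfy. Under the assumptions as literally stated, your direct approach does not close; the paper sidesteps the issue by importing Fu--Li's positivity result for the linear equation.
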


\begin{proof}
For the initial condition $P^{(0)}(t) = \mu$, $t\geq0$, we consider the sequence $\big\{X^{(m)}= (X^{(m)}_t)_{t\geq0}, m\geq1\big\}$ given by the dynamics \eqref{approximation:mckean:sde}. Since each $X^{(m)}$ is given by the solution of a standard SDE with positive jumps and time inhomogeneous drift coefficient $\widetilde{b}_m(t, x) := b(t, x, [X^{(m-1)}_t])$, by induction on $m\geq1$, it follows from Proposition 2.1 of \cite{Fu:Li} that $\mathbb{P}(X^{(m)}_t \geq 0, \, \forall t \geq0) = 1$. Up to the extraction of a convergent subsequence, we may assume that $(X^{(m)})_{m\geq 1}$ weakly converges to the unique strong solution $X$ of the SDE \eqref{mckean:sde}. We thus deduce that $\mathbb{P}(X_t\geq0) = \lim_{m} \mathbb{P}(X^{(m)}_t \geq0) = 1$. Now, applying again Proposition 2.1 of \cite{Fu:Li} to the linearized SDE \eqref{mckean:sde}, with drift $\widetilde{b}(t, x) := b(t, x, [X_t])$, allows to conclude the proof.  
\end{proof}

\begin{Eg}
As a toy model, the default intensities can interact through the average intensity, that is
\begin{align*}
d\lambda^i_t & = \kappa_t (\overline{\lambda}_t - k_t \lambda^i_t)dt + \sigma_1\sqrt[r]{|\lambda^i_t|}dW^i_t + \sigma_2 \sign(\lambda^{i}_{s-})\sqrt[q]{|\lambda^i_{s-}|}dZ^i_s
\end{align*}
where $(W^{i},Z^{i})_{1\leq i \leq N}$ are i.i.d. copies of $(W, Z)$, $W$ and $Z$ being respectively a one-dimensional Brownian motion and a spectrally-positive $\alpha$-stable process with index $\alpha \in (1, 2)$ such that $1/q +1/\alpha \geq 1$ and  $q\geq1$, $1\leq r \leq 2$. Here, $\overline{\lambda}_t = N^{-1}\sum_{j=1}^N \lambda^j_t$ is the average intensity, $\sigma_1, \, \sigma_2$ are some non-negative constants and $t\mapsto \kappa_t, \, k_t$ are deterministic functions, $t\mapsto \kappa_t$ being non-negative, which measure the strength of the mean-reversion. As the number of entities $N$ goes to infinity, the $N$-interacting particle system will converge to the mean-field equation with dynamics 
\begin{align*}
d\lambda_t & = \kappa_t (\mathbb{E}[\lambda_t]-k_t\lambda_t)dt + \sigma_1\sqrt[r]{|\lambda_t|}dW_t + \sigma_2\sign(\lambda_{s-})\sqrt[q]{|\lambda_{s-}|}dZ_s.
\end{align*}

Combining Theorem \ref{strong:existence:uniqueness} and Proposition \ref{positive:solutions:mckean:sde}, we deduce that for any non-negative initial condition $\xi$ with law $\mu$ such that $M_1(\mu) = \int_{\R_+} x^\beta \mu(dx) < \infty$, for some $\beta>1$, the above mean-field SDE admits a unique non-negative strong solution. 	Note that taking expectation on both hand side of the above equation, the mean can be written explicitly namely
\begin{gather*}
\mathbb{E}[\lambda_t] = \mathbb{E}[\xi] + \int^t_0 \mathbb{E}[\lambda_s]\kappa_s(1-k_s) ds =  \mathbb{E}[\xi] \exp\Big(\int^t_0 \kappa_s (1-k_s) ds\Big).
\end{gather*}
Note that instead of $h(x) = \sigma_2\sign(x)\sqrt[\alpha]{|x|}$, one can take $h(x) = \sigma_2\sqrt[\alpha]{|x|^+}$ in the particle system. In the limit, the resulting mean-field equation will have the same dynamics.
\end{Eg}

\section{Appendix}\label{appendix:section}

\subsection{Weak existence and moment estimates for some jump-type non-linear SDE}\label{weak:existence:lemma}
We establish here the weak existence as well as a moment estimate under mild assumptions on the coefficients for some general jump-type non-linear SDE with dynamics
\begin{equation}
X_t  = \xi + \int_0^t b(s, X_{s}, [X_s]) \, ds + \int_0^t \sigma(s, X_{s}) \, dW_s + \int_0^t  h(s, X_{s-}) \, dZ_{s}
\label{general:sde:mckean}
\end{equation}

\noindent where $W$ is a one-dimensional Brownian motion, $Z$ is a compensated Poisson random measure independent of $W$ with intensity measure $ds\nu(dz)$ satisfying $\int_{\R\backslash{\left\{0\right\}}} (|z|\wedge z^2) \nu(dz) < \infty$ and with starting point $\xi$ independent of $W$ and $Z$. The following result seems to be quite standard but we were not able to find a proof.

\begin{Lem}\label{weak:solution:and:moment:estimate} Assume that $b$, $\sigma$ and $h$ have at most linear growth in $x$ and $\mu$, locally uniformly in $t$, in the sense of \eqref{linear:growth:condition}.
Assume that $\int_{|z|\geq1} |z|^{\beta} \, \nu(dz) < \infty$ for some $\beta \geq1$ and that the initial distribution $\mu \in \mathcal{P}(\R)$ of the mean-field SDE \eqref{general:sde:mckean} has a finite $\beta$-moment, that is, $M_\beta(\mu)=\int_{\R}|x|^\beta \mu(dx) < \infty$.
Then, the following statements hold:
\begin{enumerate}
\item[(i)] For any weak solution to \eqref{general:sde:mckean} with starting distribution $\mu$, for any $T>0$, one has
\begin{equation}
\mathbb{E}[\sup_{0\leq t\leq T} |X_t|^\beta] < \infty. \label{moment:estimate:jump:type:mckean:sde}
\end{equation}
\item[(ii)] Assume that the aforementioned assumptions hold with $\beta>1$. If $[0,\infty) \times \R \times \mathcal{P}_1(\R) \ni (t, x, \mu) \mapsto b(t, x, \mu), \, \sigma(t, x), \, h(t, x)$ are continuous functions, $\mathcal{P}_1(\R)$ being equipped with the Wasserstein metric $W_1$, then there exists a weak solution to \eqref{general:sde:mckean}.  
\end{enumerate}
\end{Lem}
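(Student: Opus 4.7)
The proof splits into two parts. For (i), I use a classical localization-and-Gr\"onwall scheme based on applying It\^o's formula to $(1+X_t^2)^{\beta/2}$. For (ii), I construct a Picard-like sequence of SDEs in which the coefficients are evaluated along the previous iterate and the driving noise is the L\'evy process with jumps truncated at level $m$; weak existence is then obtained by establishing tightness, extracting a subsequential limit, and invoking a stochastic integral convergence theorem of Jakubowski--M\'emin--Pag\`es type.

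For (i), let $\tau_n := \inf\{t\geq 0 : |X_t| \geq n\}$ and set $\phi_\beta(x):=(1+x^2)^{\beta/2}$. Applying It\^o's formula to $\phi_\beta(X_{\cdot})$ on $[0, t\wedge \tau_n]$ and taking expectations, the drift and small-jump compensator terms are controlled by the linear growth assumption \eqref{linear:growth:condition} together with the identity $W_1([X_s],\delta_0) = \mathbb{E}[|X_s|] \leq \mathbb{E}[|X_s|^\beta]^{1/\beta}$. The Brownian local martingale is handled by the Burkholder--Davis--Gundy inequality, yielding a control by $\int_0^{t\wedge\tau_n}\mathbb{E}[|\sigma(s,X_s)|^\beta]\,ds$ after Jensen. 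For the pure-jump martingale I split $\nu$ into its restrictions to $(0,1]$ (using $\int_0^1 z^2\,\nu(dz) < \infty$) and to $(1,\infty)$ (using the hypothesis $\int_1^\infty z^\beta\,\nu(dz) < \infty$) and apply Kunita's inequality on each piece. Setting $u_n(t) := \mathbb{E}[\sup_{s\leq t\wedge \tau_n}|X_s|^\beta]$, this leaves an inequality of the form $u_n(t) \leq C_T + C_T \int_0^t u_n(s)\,ds$ with $C_T$ independent of $n$, and Gr\"onwall combined with Fatou (as $n\uparrow\infty$) yields \eqref{moment:estimate:jump:type:mckean:sde}.

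For (ii), I set $X^{(0)} \equiv \xi$ and define inductively
\begin{equation*}
X^{(m+1)}_t = \xi + \int_0^t b(s, X^{(m)}_s, [X^{(m)}_s])\,ds + \int_0^t \sigma(s, X^{(m)}_s)\,dW_s + \int_0^t h(s, X^{(m)}_{s-})\,dZ^m_s,
\end{equation*}
where $Z^m$ is the compensated Poisson integral with intensity $\mathbf{1}_{z\leq m}\nu(dz)$. Since the coefficients depend only on the already-constructed $X^{(m)}$, each $X^{(m+1)}$ is explicit and admits $\beta$-moments by the same argument as in (i), yielding $\sup_m \mathbb{E}[\sup_{t\leq T}|X^{(m)}_t|^\beta] < \infty$. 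Combined with standard increment estimates, this gives tightness of the laws of $\{(X^{(m)}, W, Z^m)\}_{m\geq 1}$ in $\mathcal{D}([0,\infty); \R^3)$, and $(Z^m)$ satisfies the P-UT condition since $\mathbb{E}[\sup_{s\leq t}|\Delta Z^m_s|]\leq 1 + t \int_{z\geq 1} z\,\nu(dz)$. The main obstacle is the passage to the limit in the nonlinear measure-dependent drift: after extracting a subsequence converging in law to some $(X, W, Z)$, weak convergence at continuity points combined with the uniform $\beta$-moment bound (with $\beta>1$) upgrades to $W_1([X^{(m)}_s], [X_s]) \to 0$; together with the continuity assumptions on $(t,x,\mu)\mapsto b,\sigma, h$ and the continuous mapping theorem, this yields joint convergence of the integrands, after which the P-UT property permits passage to the limit in the stochastic integrals and identifies $X$ as a weak solution of \eqref{general:sde:mckean}.
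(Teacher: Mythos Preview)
Your treatment of part~(i) takes a genuinely different route from the paper. You go directly through It\^o's formula for $(1+X^2)^{\beta/2}$ combined with BDG/Kunita inequalities and a localization--Gr\"onwall scheme. The paper instead adapts an argument of Fournier: it strips off the large jumps, writes the dynamics between consecutive large-jump times as an auxiliary SDE with bounded jumps (hence finite second moments by elementary means), conditions on the $\sigma$-field generated by the large-jump times, and then reassembles the estimate via the Poisson distribution of the number of large jumps on $[0,T]$. Your approach is more economical when it works; the paper's decomposition buys a cleaner handling of the regime $\beta\in[1,2)$, where the jump martingale only has $\beta$-moments and the direct Kunita estimate needs some care. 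One point you should tighten: the drift bound produces a term $W_1([X_s],\delta_0)=\mathbb{E}[|X_s|]$, which is \emph{not} localized by $\tau_n$ and therefore cannot be absorbed into your $u_n(s)$; the inequality you wrote is not self-closing as stated.

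For part~(ii) your overall architecture---tightness, the P-UT property of $(Z^m)$, continuous-mapping plus a stochastic-integral convergence theorem---matches the paper's. The substantive difference is in the iteration. The paper freezes only the \emph{measure}: $X^{(m+1)}$ solves a genuine (linear) SDE in $X^{(m+1)}$ with drift $b(\cdot, X^{(m+1)}_\cdot, P^{(m)}(\cdot))$, and weak existence at each step is imported from a classical result for jump SDEs with continuous coefficients. Your iteration is fully explicit---all integrands at step $m{+}1$ are evaluated at $X^{(m)}$---which makes each step trivial but creates a gap in the limit identification. After extracting a subsequence $(m_k)$ with $X^{(m_k)}\Rightarrow X$, the equation you must pass to the limit in is the one for $X^{(m_k+1)}$ (or equivalently the one defining $X^{(m_k)}$ in terms of $X^{(m_k-1)}$); either way you need the \emph{adjacent} iterate to converge to the \emph{same} limit, and a subsequence gives no such guarantee. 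At best, extracting along pairs yields $(X^{(m_k)},X^{(m_k+1)})\Rightarrow (Y,X)$ and a limiting relation $X_t=\xi+\int_0^t b(s,Y_s,[Y_s])\,ds+\cdots$, which is not the McKean--Vlasov equation unless you can show $X=Y$. The paper's implicit scheme confines the index shift to the measure slot only, so that the limit process already sits in the state argument of $b,\sigma,h$; this is a strictly easier mismatch to close (and is what the paper handles, somewhat tersely, through its martingale-problem formulation and the identification $P^\infty=[X]$).
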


\begin{proof} 
\textit{Step 1:} We first prove the moment estimate \eqref{moment:estimate:jump:type:mckean:sde}. We restrict ourself to the case $\beta \in [1,2]$. The case $\beta >2$ can be treated in a completely analogous manner. We adapt to our current mean-field setting the argument of Proposition 2 of Fournier \cite{Fournier}. We introduce the auxiliary equation
\begin{equation}
\label{reduced:form:mckean:vlasov}
Y_t = \xi +  \int_0^t \sigma(s, Y_s) \, dW_s + \int_0^t \int_{|z|\leq 1} h(s, Y_{s-}) z \widetilde{N}(ds, dz) + \int_0^t c(s, Y_{s}, [Y_s]) \, ds 
\end{equation}

\noindent with $c(s, x, \mu) = b(s, x, \mu) - h(s, x) \int_{|z|\geq1} z\nu(dz)$ and $\xi$ is a real-valued random variable independent of $(W, N)$ with distribution $\mu$ satisfying $M_\beta(\mu)= \int_{\R} |x|^\beta \mu(dx)< \infty$. Since $b, \, \sigma, \, h$ have at most linear growth at infinity in $x$ and $\mu$ locally uniformly w.r.t. the time variable $t$ and $\int_{|z|\leq 1} z^2 \, \nu(dz) < \infty$, it is easily checked that for any $T>0$, there exists a constant $C_T$ such that
$$
\mathbb{E}\big[\sup_{0\leq t \leq T} Y_t^2 \big| \xi  \big] \leq C_T\left(1+ \xi^2 + \int_0^T\mathbb{E}[Y^2_s|\xi] \, ds  + \left(\int_0^T W_1([Y_s], \delta_0) \, ds\right)^2\right) 
$$

\noindent which in turn by Gr\"onwall's lemma yields
\begin{align*}
\mathbb{E}\big[\sup_{0\leq t \leq T} Y^2_t\big| \xi\big]^{\frac12} &  \leq C_T\bigg(1+ |\xi| + \int_0^T \mathbb{E}[|Y_s|] \, ds\bigg).
\end{align*}

Hence, by Jensen's inequality
\begin{equation}
\label{moment:estimate:reduced:form:mckean:vlasov}
\mathbb{E}\big[\sup_{0\leq t \leq T} Y^2_t\big| \xi\big]^{\frac\beta2} \leq C_T\bigg(1+ |\xi|^\beta + \int_0^T\mathbb{E}[|Y_s|^\beta] \, ds\bigg)
\end{equation}


\noindent so that, by taking expectation and applying again Jensen's inequality
\begin{equation}
\label{beta:moment:estimate:reduced:form:mckean:vlasov}
\mathbb{E}[\sup_{0\leq t \leq T} |Y_t|^\beta] \leq C_T\bigg(1+ M_\beta(\mu) +  \int_0^T\mathbb{E}[|Y_s|^\beta] \, ds\bigg)
\end{equation}

\noindent where we recall that $M_\beta(\mu) = \int_{\R} |x|^\beta d\mu(x)$.
Observe now that the dynamics \eqref{general:sde:mckean} can be rewritten as
\begin{align}
X_t & = \xi +  \int_0^t \sigma(s, X_s) \, dW_s +  \int_0^t \int_{|z|\leq 1} h(s, X_{s-}) \, z \widetilde{N}(ds, dz) \label{general:nonlinear:jump:sde:rewrite}\\
& \quad + \int_0^t c(s, X_s, [X_{s}]) \, ds +  \int_0^t \int_{|z|\geq1} h(s, X_{s-})\, z N(ds, dz). \nonumber
\end{align}

The last integral of \eqref{general:nonlinear:jump:sde:rewrite} generates jumps at discrete instants. More precisely, one may write the restriction of $N$ to $[0, \infty) \times \left\{ |z|\geq1\right\}$ as $\sum_{n\geq1} \delta_{(T_n, Z_n)}$ where the $(T_n)_{n\geq1}$ are the jump times of a Poisson process denoted in what follows by $J$ with parameter $\lambda = \int_{|z|\geq1}\nu(dz)$ and where the random variables $(Z_n)_{n\geq1}$ are i.i.d. with law $\lambda^{-1}\textbf{1}_{\left\{|z|\geq 1\right\}} \nu(dz)$. We thus see that conditioning w.r.t the $\sigma$-field $\mathcal{G}=\sigma(T_n, n\geq1)$ the dynamics \eqref{general:nonlinear:jump:sde:rewrite} reduces to \eqref{reduced:form:mckean:vlasov} on each time interval $(T_n, T_{n+1})$. Namely, $X$ solves \eqref{reduced:form:mckean:vlasov} on $[0,T_1)$. Hence, by \eqref{beta:moment:estimate:reduced:form:mckean:vlasov}
$$
\mathbb{E}\big[\sup_{0\leq t < T_1 \wedge T}|X_t|^{\beta}\big| \mathcal{G}\big] \leq C_T\bigg(1+M_\beta(\mu) + \int_0^{T_1\wedge T} \mathbb{E}[|X_s|^\beta]\, ds \bigg).
$$

The preceding upper-bound together with \eqref{beta:moment:estimate:reduced:form:mckean:vlasov} yield
\begin{align*}
\mathbb{E}\big[\sup_{0\leq t \leq T_1 \wedge T}|X_t|^{\beta}\big| \mathcal{G}\big] &  \leq \mathbb{E}\big[\sup_{0\leq t \leq T}|X_t|^{\beta}\big| \mathcal{G}\big] \textbf{1}_{\left\{T_1\geq T\right\}} + \mathbb{E}\big[\sup_{0\leq t < T_1}|X_t|^{\beta}\big| \mathcal{G}\big]  \textbf{1}_{\left\{T_1< T\right\}} + \mathbb{E}\big[|X_{T_1}|^{\beta}\big| \mathcal{G}\big]  \textbf{1}_{\left\{T_1< T\right\}} \\
& \leq C_T\bigg(1+M_\beta(\mu)+ \int_0^{T_1\wedge T} \mathbb{E}[|X_s|^\beta]\, ds \bigg) + \mathbb{E}\big[|X_{T_1}|^{\beta}\big| \mathcal{G}\big]  \textbf{1}_{\left\{T_1< T\right\}} .
\end{align*}
\noindent Now, $X_{T_1} = X_{T_1-}+h(T_1, X_{T_1-}) Z_1$ so that, using the linear growth assumption of the jump coefficient (uniformly on $[0,T]$), $|X_{T_1}|^\beta \leq C_T(1+|X_{T_1-}|^\beta)(1+|Z_1|^\beta)$ on the set $\left\{ T_1 < T\right\}$. Again by \eqref{beta:moment:estimate:reduced:form:mckean:vlasov}$,\mathbb{E}[|X_{T_1-}|^\beta | \mathcal{G}] \leq C_T(1+M_\beta(\mu) + \int_0^{T_1 \wedge T} \mathbb{E}[|X_s|^\beta] \, ds)$ which in turn clearly implies
$$
\mathbb{E}[|X_{T_1}|^\beta | \mathcal{G}] \leq C_T(1+\mathbb{E}[|Z_1|^\beta])  \bigg(1+M_\beta(\mu) + \int_0^{T_1 \wedge T} \mathbb{E}[|X_s|^\beta] \,ds\bigg)
$$

\noindent on the set $\left\{ T_1 < T\right\}$. Hence, up to a change of the positive constant $C_T$ (which from now on depends on $\mathbb{E}[|Z_1|^\beta]:=\int_{|z|\geq1}|z|^\beta \nu(dz) < \infty$), we therefore deduce the following estimate
$$
\mathbb{E}\big[\sup_{0 \leq t \leq T_1 \wedge T} |X_t|^\beta\big| \mathcal{G}\big]\leq C_T\bigg(1+ M_\beta(\mu) +  \int_0^{T_1 \wedge T} \mathbb{E}[|X_s|^\beta] \,ds \bigg).
$$

Similarly, on each time interval $(T_k, T_{k+1})$, $k\geq1$, one has
\begin{align*}
\mathbb{E}\big[\sup_{ T_{k}\wedge T\leq t \leq T_{k+1} \wedge T} |X_t|^\beta\big| \mathcal{G}\big] & \leq C_T\bigg(1+ \mathbb{E}[|X_{T_k\wedge T}|^\beta|\mathcal{G}] + \int_{T_{k}\wedge T}^{T_{k+1} \wedge T} \mathbb{E}[|X_s|^\beta] \,ds \bigg) \\
& \leq C_T\bigg(1+ \mathbb{E}\big[\sup_{T_{k-1}\wedge T\leq t \leq T_{k} \wedge T}|X_t|^\beta\big| \mathcal{G}\big] +  \int_{T_{k}\wedge T}^{T_{k+1} \wedge T} \mathbb{E}[|X_s|^\beta] \,ds \bigg)
\end{align*}

\noindent where $C_T$ is a positive constant that does not depend on the index $k$. We thus deduce that there exists a constant $K_T>1$ (which depends on $M_\beta(\mu)$ and $T$) such that 
$$
\mathbb{E}\big[\sup_{T_{k}\wedge T\leq t \leq T_{k+1} \wedge T} |X_t|^\beta\big| \mathcal{G}\big] \leq K^{k+1}_T \left(1 + \int_0^{T_{k+1}\wedge T}  \mathbb{E}[|X_s|^\beta] \,ds\right) 
$$
\noindent which in turn implies
\begin{align*}
\mathbb{E}\big[\sup_{0 \leq t \leq T_{k} \wedge T} |X_t|^\beta\big| \mathcal{G}\big] &  \leq \mathbb{E}\big[\max_{1\leq j\leq k}\sup_{T_{j-1}\wedge T \leq t \leq T_{j} \wedge T} |X_t|^\beta\big| \mathcal{G}\big] \\
& \leq (K_T + \cdots + K^{k}_T)  \left(1 + \int_0^{T_{k}\wedge T}  \mathbb{E}[|X_s|^\beta] \,ds\right)  \\
& \leq \frac{K^{k+1}_T}{K_T-1} \left(1 + \int_0^{T_{k}\wedge T}  \mathbb{E}[|X_s|^\beta] \,ds\right) .
\end{align*}

Hence, using the fact that $\left\{J_T = n \right\} = \left\{T_n \leq T < T_{n+1}\right\}$ together with the above estimate, we conclude
\begin{align*}
\mathbb{E}[\sup_{0\leq t \leq T}|X_t|^\beta] & = \sum_{n\geq0} \mathbb{E}[\sup_{0\leq t \leq T}|X_t|^\beta \textbf{1}_{\left\{J_T=n\right\}}] \\
& =  \sum_{n\geq0} \mathbb{E}\big[\textbf{1}_{\left\{T_n\leq T < T_{n+1}\right\}} \mathbb{E}\big[\sup_{0\leq t \leq T_{n+1}\wedge T}|X_t|^\beta |\mathcal{G}\big] \big]\\
& \leq \frac{1}{K_T-1} \sum_{n\geq0} K^{n+2}_T \mathbb{E}\bigg[\textbf{1}_{\left\{T_n\leq T < T_{n+1}\right\}} \bigg(1+ \int_0^{T_{n+1}\wedge T}  \mathbb{E}[|X_s|^\beta] \,ds\bigg) \bigg]\\
& \leq \frac{1}{K_T-1} \sum_{n\geq0} K^{n+2}_T \frac{(\lambda T)^{n}}{n!} e^{-\lambda T} \bigg(1+ \int_0^{T}  \mathbb{E}[|X_s|^\beta] \,ds\bigg).
\end{align*}
Finally, one concludes the proof of the moment estimate \eqref{moment:estimate:jump:type:mckean:sde} by using again Gr\"onwall's inequality.
\vskip1pt

\noindent \textit{Step 2:} We now prove that weak existence holds for the SDE \eqref{general:sde:mckean} under the additional assumptions that $M_\beta(\mu) + \int_{|z|\geq1} |z|^{\beta} \, \nu(dz) < \infty$, for some $\beta>1$ and that the coefficients $b$, $\sigma$ and $h$ are continuous. We proceed using a compactness argument on the space of probability on $\mathcal{D}([0,\infty), \R)$. We thus aim at constructing a sequence of probability measures on $\mathcal{D}([0,\infty), \R)$ that will converge (up to the extraction of a subsequence) to a solution of the corresponding martingale problem with associated infinitesimal generator $(\mathcal{L}^{P}_t)_{t\geq0}$.  A probability measure $P \in \mathcal{D}([0,\infty), \R)$, with time marginals $(P(t))_{t\geq0}$ is a solution to the martingale problem starting from $\mu$ at time $0$ if, denoting by $y=(y(t))_{t\geq0}$ the canonical process and $\mathcal{F}$ the canonical filtration, one has $P(y(0)\in \Gamma) = \mu(\Gamma)$, for all $\Gamma \in \mathcal{B}(\R)$ and for any $\varphi \in \mathcal{C}^{\infty}_b(\R)$
$$
f(y(t)) - \int_0^t \mathcal{L}^{P}_s\varphi(y(s))\, ds
$$

\noindent is an $(\mathcal{F},P)$-martingale where 
\begin{align*}
\mathcal{L}^{P}_t \varphi(x) & = b(t, x, P(t))\varphi'(x) + \frac{1}{2} \sigma^2(t, x, P(t)) \varphi''(x) \\
& \quad + \int_{\R} \left\{\varphi(x+ h(t, x, P(t))z) - \varphi(x) - \varphi'(x)h(t, x, P(t))z \textbf{1}_{\{|z|\leq 1\}} \right\} \nu(dz).
\end{align*}

 We consider the sequence of probability measures $(P^{(m)})_{m\geq0}$ on $\mathcal{D}([0,\infty), \R)$, constructed as follows: for a given probability measure $P^{(0)}$ on $\mathcal{D}([0,\infty), \R)$, such that $\lim_{s\rightarrow t}W_1(P^{(0)}(t),P^{(0)}(s)) = 0$ and $\sup_{s\geq 0} W_1(P^{(0)}(s), \delta_0) < \infty$, and for a given non-negative integer $m$, we let $P^{(m+1)}$ be the probability measure induced by a weak solution to the following SDE with dynamics
 \begin{equation}
 X^{(m+1)}_t  = \xi + \int_0^t b(s, X^{(m+1)}_{s}, P^{(m)}(s)) \, ds + \int_0^t \sigma(s, X^{(m+1)}_{s}) \, dW_s + \int_0^t  h(s, X^{(m+1)}_{s-}) \, dZ^{m}_{s} \label{approximation:mckean:sde}
 \end{equation} 

\noindent where $Z^{m}_t = \int_0^t \int_{\R\backslash{\left\{0\right\}}} z \widetilde{N}_{m}(ds,dz)$, $\widetilde{N}_m$ being a compensated Poisson random measure on $[0,\infty)\times \R\backslash{\left\{0\right\}}$ with intensity measure $dt \textbf{1}_{|z|\leq m} \nu(dz)$. Following similar lines of reasoning as those employed in the first step, one may prove that for any weak solution to \eqref{approximation:mckean:sde} with starting distribution $\mu$, for any $T>0$, one has
\begin{equation}
\sup_{m\geq1} \mathbb{E}[\sup_{0\leq t \leq T}|X^{(m)}_t|^\beta] < \infty.\label{supremum:sequence:moments}
\end{equation}
We deliberately omit the proof of the above estimate. Similarly, for any weak solution to \eqref{approximation:mckean:sde}, $m\geq0$, for any $0\leq s < t$, there exists a positive constant $C$ (possibly depending on $m$) such that
$$
\mathbb{E}[|X^{(m+1)}_t-X^{(m+1)}_s|] \leq C (t-s)^{1/2}.
$$
The previous bound directly stems from \eqref{approximation:mckean:sde} and some standard computations.
Now, by induction on $m\geq1$, using the fact that $\lim_{s\rightarrow t} W_1(P^{(m)}(t), P^{(m)}(s)) \leq \lim_{s\rightarrow t} \mathbb{E}[|X^{(m)}_t-X^{(m)}_s|] = 0$, the continuity of the coefficients $b$, $\sigma$ and $h$, and $\int_{|z|\leq m} |z|^2 \nu(dz) < \infty$, from Theorem 175 of Situ \cite{Situ}, weak existence holds for the SDE \eqref{approximation:mckean:sde} for any $m\geq0$.

We now establish the tightness on $\mathcal{D}([0,\infty), \R)$ of the sequence $(P^{(m)})_{m\geq0}$ by employing Aldou's criterion. Let $(S, S')$ be two stopping times satisfying $a.s.$ $0\leq S \leq S' \leq S+\delta \leq T$, for some $\delta>0$. We also introduce for some constant $A>0$ (to be chosen later on) the stopping time $\tau_{A} = \inf\left\{ t\geq 0: |y(t)| \geq A\right\}$. For any $\varepsilon$, take $\phi \in \mathcal{C}^{\infty}_b(\R)$ so that $0\leq \phi \leq 1$, $\phi(0)=0$ and $\phi \equiv 1$ for $|x|\geq\varepsilon$. Given an integer $m$ and a stopping time $\tau$, we let $P^{(m)}_{\tau, w}$ be the regular conditional probability distribution of $P^{(m)}$ given $\mathcal{F}_{\tau}$. Then, 
$$
\phi(y(t\wedge S') - y((S\wedge \tau_A)(w))) - \int_{(S\wedge \tau_A)(w)}^{t\wedge S'} \mathcal{L}^{P^{(m)}}_r\phi(y(r)-y(S(w))) \, dr
$$
\noindent is a $P^{(m)}_{S\wedge \tau_A, w}$-martingale so that
\begin{align*}
P^{(m)}_{S,w}(|y(S')-y(S(w))| > \varepsilon, \tau_A \geq T)&  = P^{(m)}_{S,w}(|y(S'\wedge \tau_A)-y((S\wedge \tau_A)(w))| > \varepsilon, \tau_A \geq T) \\
& \leq \mathbb{E}^{P^{(m)}_{S,w}}[\phi(y(S'\wedge \tau_A) - y(S(w)\wedge \tau_A))]\\
& \leq \mathbb{E}^{P^{(m)}_{S,w}}\bigg[\int_{S(w)}^{(S(w) +\delta) \wedge \tau_A} |\mathcal{L}^{P^{(m)}}_r\phi(y(r) - y(S(w)))| \, dr\bigg] \\
& \leq C(1+A^2) \delta
\end{align*}

\noindent where we used the fact that $P^{(m)}$-a.s., for all $s \in [0, \tau_A\wedge T]$, 
$$
 |b(s, y(s-), P^{(m-1)}(s)| + |\sigma(s, y(s-))|^2 + |h(s, y(s-))|^2 \leq K(1+ A^2 + \sup_{m\geq 1, t\in [0,T]} W^2_1(P^{(m-1)}(t), \delta_0)),
$$ 
\noindent and where the constant $K$ depends only on the $C^2$-norm of $\phi$. Note that $\sup_{m\geq0, t\in [0,T]} W_1(P^{(m)}(t), \delta_0)$ is finite by \eqref{supremum:sequence:moments}. Also, again from \eqref{supremum:sequence:moments}, 
$$
P^{(m)}(\tau_A \leq T) \leq P^{(m)}(\sup_{0\leq t \leq T}|y(s)| > A) \leq \frac{C_T}{A}.
$$

From the above computations, we thus derive
$$
P^{(m)}(|y(S')-y(S)| \geq \varepsilon) \leq C(1+A^2) \delta + \frac{C_T}{A}
$$
\noindent so that by choosing $A=\delta^{-1/3}$, for all $\delta \in (0,1)$, we get
$$
P^{(m)}(|y(S')-y(S)| \geq \varepsilon) \leq C_T \delta^{\frac13}.
$$

\noindent where the constant $C_T$ does not depend on $m$ and $\delta$. We thus conclude that for all $T>0$, for all $\varepsilon >0$
$$
\lim_{\delta \downarrow 0} \limsup_{m} \sup_{S,S': S \leq S' \leq S+\delta \leq T}P^{(m)}(|y(S')-y(S)| \geq \varepsilon) = 0.
$$
As a consequence, by Theorem 4.5 on page 356 of Jacod and Shiryaev \cite{Jacod:Shiryaev}, the sequence of probability measures $(P^{(m)})_{m\geq0}$ is tight. By the Prokhorov theorem the sequence $(Z^{m})_{m\geq 0}$ is also tight since it converges weakly in $\mathcal{D}([0,\infty), \R)$ to $Z$. Relabelling the indices if necessary, we may assert that $(P^{(m)})_{m \geq 0}$ converges weakly to a probability measure $P^{\infty}$ and that $(X^{m}, Z^{m})_{m\geq 1}$ converges in law to $(X, Z)$ in $\mathcal{D}([0,\infty), \R^2)$.

From \eqref{supremum:sequence:moments} and the weak convergence of the sequence $(P^{(m)}(t))_{m\geq1}$ towards $P^{\infty}(t)$, by uniform integrability it follows that $\mathbb{E}[|X^{(m)}_t|] \rightarrow \mathbb{E}[|X_t|]$, for any $t\geq0$, so that the convergence of $(P^{(m)}(t))_{m\geq1}$ also holds with respect to the $W_1$ metric. Now, by continuity of $b$, $\sigma$ and $h$, the continuous mapping theorem implies that the family 
$$
(X^{(m+1)}_t, b(t, X^{(m+1)}_t, P^{(m)}(t)), \sigma(t, X^{(m+1)}_t), h(t, X^{(m+1)}_t), W_t, Z^{m}_t)_{t\geq0}, \, m\geq0
$$ 

\noindent converges in law to $(X_t, b(t, X_t, P^{\infty}(t)), \sigma(t, X_t), h(t, X_t), W_t, Z_t)_{t\geq0}$ in $\mathcal{D}([0,\infty), \R^{6})$.

We now employ Corollary 6.30, page 385 of Jacod and Shiryaev \cite{Jacod:Shiryaev} to deduce that the sequence $(Z^m)_{m\geq0}$ is Predictably Uniformly Tight (P-UT property) in the sense of Definition 6.1. page 377 of \cite{Jacod:Shiryaev}. In order to do that it suffices to check that $\sup_{m\geq0} \mathbb{E}[\sup_{s \in [0,t]}|\Delta Z^m_s|] < \infty$, for all $t>0$. One may bound the previous quantity by one (coming from the jumps which are smaller than one) plus the sum of jumps bigger than one. This yields
$$
\mathbb{E}[\sup_{s \in [0,t]}|\Delta Z^m_s|] \leq 1 + \mathbb{E}\bigg[\int_0^t \int_{1\leq |z|\leq m} |z| N_m(dsdz)\bigg] \leq 1 +t \int_{1\leq |z|\leq m} |z| \nu(dz) ds \leq  1 + Ct
$$

\noindent where $C=\int_{|z|\geq1}|z| \nu(dz) < \infty$. From Theorem 6.22, page 383 of Jacod and Shiryaev \cite{Jacod:Shiryaev}, the sequence 
$$
\bigg(X^{(m+1)}_t, \int_0^t b(s, X^{(m+1)}_s, P^{(m)}(s))\, ds, \int_0^t\sigma(s, X^{(m+1)}_s) \, dW_s, \int_0^t h(s, X^{(m+1)}_{s-}) \, dZ_s, W_t, Z^{m}_t\bigg)_{t\geq0},\, m\geq0
$$ 
\noindent converges in law to 
$$
\bigg(X_t, \int_0^t b(s, X_s, P^{\infty}(s))\, ds, \int_0^t\sigma(s, X_s) \, dW_s, \int_0^t h(s, X_{s-}) \, dZ_s, W_t, Z_t\bigg)_{t\geq0}.
$$

Hence, passing to the limit in the dynamics \eqref{approximation:mckean:sde}, we find that 
$$
X_t =  \xi + \int_0^t b(s, X_{s}, P^{\infty}(s)) \, ds + \int_0^t \sigma(s, X_{s}) \, dW_s + \int_0^t  h(s, X_{s-}) \, dZ_{s}
$$
in distribution,
\noindent so that $P^{\infty}$ is a weak solution to \eqref{general:sde:mckean}.
\end{proof}

\subsection{Yamada and Watanabe Approximation Technique}\label{YW}
To deal with the H\"older continuity of the coefficients $\sigma$ and $h$, we introduce below the Yamada and Watanabe approximation technique (see for example \cite{Gyongy:rasonyi,Li:Mytnik,yamada:watanabe} for some applications of this technique).
For each $\delta \in (1,\infty)$ and $\varepsilon \in (0,1)$, we select a continuous function $\psi _{\delta, \varepsilon}: \real \to \real^+$ with a support included in $[\varepsilon/\delta, \varepsilon]$ and such that
\begin{align*} 
\int_{\varepsilon/\delta}^{\varepsilon} \psi _{\delta, \varepsilon}(z) dz
= 1 \quad \text{ and } \quad  0 \leq \psi _{\delta, \varepsilon}(z) \leq \frac{2}{z \log \delta}, \:\:\:z > 0.
\end{align*}
Let us define the real-valued function $\phi_{\delta, \varepsilon} \in C^2(\real)$
\begin{align*}
\phi_{\delta, \varepsilon}(x)&:=\int_0^{|x|}\int_0^y \psi _{\delta, \varepsilon}(z)dzdy.
\end{align*}
It is straightforward to check that $\phi_{\delta, \varepsilon}$ satisfies the following useful properties: 
\begin{align} 
&|x| \leq \varepsilon + \phi_{\delta, \varepsilon}(x), \text{ for any $x \in \real $}, \label{phi3}\\ 
&0 \leq |\phi'_{\delta, \varepsilon}(x)| \leq 1, \text{ for any $x \in \real$} \label{phi2}, \\
&\phi'_{\delta, \varepsilon}(x) \geq 0, \text{ for } x\geq 0 \text{ and } \phi'_{\delta, \varepsilon}(x) < 0, \text{ for } x< 0, \label{phi1}\\
&\phi''_{\delta, \varepsilon}(\pm|x|)=\psi_{\delta, \varepsilon}(|x|)
\leq \frac{2}{|x|\log \delta}{\bf 1}_{[\varepsilon/\delta, \varepsilon]}(|x|)
\leq \frac{2\delta }{\varepsilon \log \delta},
\text{ for any $x \in \real \setminus\{0\}$}. \label{phi4}
\end{align}

We present below two technical lemmas. Lemma \ref{key_lem_0} below is analoguous to Lemma 3.2 given in \cite{Li:Mytnik}. We provide its proof for sake of completeness.
\begin{Lem}\label{key_lem_0}
	Suppose that the L\'evy measure $\nu$ satisfies $\int_0^{\infty} \{z \wedge z^2\} \nu(dz) < \infty$.
	Let $\varepsilon \in (0,1)$ and $\delta \in (1,\infty)$.
	Then for any $x \in \real$, $y \in \real \setminus\{0\}$ with $xy \geq 0$ and $u>0$, it holds that
	\begin{align*}
	&\int_{0}^{\infty}
	\{\phi_{\delta,\varepsilon}(y+xz)-\phi_{\delta,\varepsilon}(y)-xz\phi_{\delta,\varepsilon}'(y)\} \nu(dz)\notag
	\leq 2 \cdot \1_{(0,\varepsilon]}(|y|)
	\left\{
	\frac{|x|^2}{\log \delta} \left(\frac{1}{|y|} \wedge \frac{\delta}{\varepsilon}\right) \int_{0}^{u} z^2 \nu(dz)
	+ |x| \int_{u}^{\infty} z \nu(dz)
	\right\}.
	\end{align*}
\end{Lem}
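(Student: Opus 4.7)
The plan is to split the $\nu(dz)$-integral at the cutoff $u$ and treat small and large jumps differently, using second-order Taylor expansion on the former and a linear bound on the latter. The sign condition $xy \geq 0$ is what makes everything work: it guarantees that the straight-line path from $y$ to $y+xz$ stays on the same side of the origin as $y$, so we can lower-bound $|y+txz|$ by $|y|$ along the path and invoke \eqref{phi4}.

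First I would record two preliminary observations. Since $\phi_{\delta,\varepsilon}$ is even with non-negative second derivative (from the definition and \eqref{phi4}), $\phi_{\delta,\varepsilon}$ is globally convex on $\R$, so the integrand is non-negative. Next, from $\int_0^\infty \psi_{\delta,\varepsilon}=1$ and the support of $\psi_{\delta,\varepsilon}$, one checks that $\phi_{\delta,\varepsilon}(w)=\phi_{\delta,\varepsilon}(\varepsilon)+|w|-\varepsilon$ and $\phi'_{\delta,\varepsilon}(w)=\sign(w)$ for $|w|>\varepsilon$. So if $|y|>\varepsilon$, then the sign condition forces $|y+xz|=|y|+|xz|>\varepsilon$ as well, which yields $\phi_{\delta,\varepsilon}(y+xz)-\phi_{\delta,\varepsilon}(y)=|xz|$ and $xz\phi'_{\delta,\varepsilon}(y)=|xz|$; hence the integrand vanishes identically. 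This justifies the indicator $\mathbf{1}_{(0,\varepsilon]}(|y|)$, and for the remainder of the argument I may assume $|y|\leq \varepsilon$.

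For the small-jump part $z \in (0,u]$, I would apply the integral form of Taylor's theorem
\[
\phi_{\delta,\varepsilon}(y+xz)-\phi_{\delta,\varepsilon}(y)-xz\,\phi'_{\delta,\varepsilon}(y)=(xz)^2\int_0^1(1-t)\,\phi''_{\delta,\varepsilon}(y+txz)\,dt.
\]
Because $xy \geq 0$ and $z>0$, the points $y+txz$, $t\in[0,1]$, all lie on the same side of zero as $y$, hence $|y+txz|\geq |y|$. On the other hand, wherever $\phi''_{\delta,\varepsilon}$ is nonzero one has $|y+txz|\geq \varepsilon/\delta$ by \eqref{phi4}. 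Combining, the bound $\phi''_{\delta,\varepsilon}(w)\leq 2/(|w|\log\delta)$ from \eqref{phi4} yields on the path $\phi''_{\delta,\varepsilon}(y+txz)\leq \frac{2}{\log\delta}\bigl(\tfrac{1}{|y|}\wedge \tfrac{\delta}{\varepsilon}\bigr)$. Pulling this constant out of the $t$-integral gives a factor of $\int_0^1(1-t)\,dt=\tfrac12$, producing the estimate $\frac{|x|^2 z^2}{\log\delta}\bigl(\tfrac{1}{|y|}\wedge\tfrac{\delta}{\varepsilon}\bigr)$ per jump; integrating against $\nu$ over $(0,u]$ delivers the first term of the right-hand side.

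For the large-jump part $z>u$, I would use \eqref{phi2}: since $|\phi'_{\delta,\varepsilon}|\leq 1$, the mean value theorem gives $|\phi_{\delta,\varepsilon}(y+xz)-\phi_{\delta,\varepsilon}(y)|\leq |xz|$ and $|xz\,\phi'_{\delta,\varepsilon}(y)|\leq |xz|$, so the integrand is bounded by $2|x|z$; integrating over $(u,\infty)$ furnishes the second term. Summing the two contributions yields the claim (the factor $2$ in the statement absorbs both pieces, since the Taylor estimate above actually comes with the smaller constant $1$). The main technical point is really just the initial sign analysis: correctly exploiting $xy\geq 0$ both to confine the Taylor path to one side of the origin (so that the lower bound $|y+txz|\geq |y|$ is available) and to show the integrand vanishes for $|y|>\varepsilon$.
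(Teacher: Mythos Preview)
Your proof is correct and follows essentially the same route as the paper: split the $\nu$-integral at $u$, use the second-order Taylor remainder together with $|y+txz|\geq |y|$ (from $xy\geq 0$, $z>0$) and \eqref{phi4} on $(0,u]$, and a linear bound on $(u,\infty)$. The only cosmetic differences are that you extract the indicator $\mathbf{1}_{(0,\varepsilon]}(|y|)$ up front by showing the integrand vanishes for $|y|>\varepsilon$, whereas the paper reads it off from $\mathbf{1}_{[\varepsilon/\delta,\varepsilon]}(|y+\theta xz|)\leq \mathbf{1}_{(0,\varepsilon]}(|y|)$ inside the Taylor integral, and for large $z$ the paper exploits $x\phi'_{\delta,\varepsilon}(y)\geq 0$ (again from $xy\geq 0$) to drop the $-xz\phi'_{\delta,\varepsilon}(y)$ term and obtain the sharper constant $1$ rather than your $2$ from the triangle inequality; both fit within the stated factor $2$.
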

\begin{proof}
	Let $x \in \real$, $y \in \real \setminus\{0\}$ with $xy \geq 0$ and $z >0$.
	By the second order Taylor's expansion for $\phi_{\delta,\varepsilon}$, it follows from \eqref{phi4} that
	\begin{align*}
	\phi_{\delta,\varepsilon}(y+xz)-\phi_{\delta,\varepsilon}(y)-xz\phi_{\delta,\varepsilon}'(y)
	&=|xz|^2 \int_{0}^{1} \theta \phi_{\delta,\varepsilon}''(y+ \theta xz)d \theta
	\leq \frac{2|xz|^2}{\log \delta} \int_{0}^{1} \frac{\theta \1_{[\varepsilon/\delta,\varepsilon]}(|y+ \theta xz|) }{|y+ \theta xz|} d \theta.
	\end{align*}
	Since $xy \geq 0$, we have $|y| \leq |y+\theta xz|$ and $\1_{[\varepsilon/\delta,\varepsilon]}(|y+ \theta xz|) \leq \1_{(0,\varepsilon]}(|y|)$.
	Hence we obtain
	\begin{align}\label{key_lem_2}
	\phi_{\delta,\varepsilon}(y+xz)-\phi_{\delta,\varepsilon}(y)-xz\phi_{\delta,\varepsilon}'(y)
	& \leq \frac{2 |xz|^2 \1_{(0,\varepsilon]}(|y|)}{ \log \delta} \left(\frac{1}{|y|} \wedge \frac{\delta}{\varepsilon}\right).
	\end{align}
	Moreover, since $xy \geq 0$, by \eqref{phi1} we have $x \phi_{\delta,\varepsilon}'(y) \geq 0$. This together with the fact that the right hand side of \eqref{key_lem_2} has $\1_{(0,\varepsilon]}(|y|)$, we obtain
	\begin{align}\label{key_lem_3}
	\phi_{\delta,\varepsilon}(y+xz)-\phi_{\delta,\varepsilon}(y)-xz\phi_{\delta,\varepsilon}'(y)
	&\leq \1_{(0,\varepsilon]}(|y|) \{\phi_{\delta,\varepsilon}(y+xz)-\phi_{\delta,\varepsilon}(y) \} \notag\\
	&=\1_{(0,\varepsilon]}(|y|) xz \int_{0}^{1}\phi_{\delta,\varepsilon}'(y+\theta xz) d \theta
	\leq \1_{(0,\varepsilon]}(|y|) |xz|.
	\end{align}
	The result then follows from \eqref{key_lem_2} and \eqref{key_lem_3}.
\end{proof}

\begin{Lem}\label{key_lem12}
Let $\varepsilon \in (0,1)$ and $\delta \in (1,\infty)$.
	Then, for any $\alpha \in (\alpha_\nu, 2]$, there exists some constant $C>0$ such that for any $u \in (0,\infty]$, for any $(x, x', y) \in \real^3$ satisfying $-\sign(y)x' \leq \kappa |y|$, for some positive constant $\kappa$ if $yx'<0$ it holds
	\begin{align*}
	&\int_{0}^{\infty}\left|
	\phi_{\delta,\varepsilon}(y+xz)-\phi_{\delta,\varepsilon}(y+x'z)-(x-x')z\phi_{\delta,\varepsilon}'(y)
	\right| \nu(dz) \leq C\bigg[\left\{\frac{\delta}{\varepsilon \log \delta} + 1\right\}|x-x'|^{\alpha}  +  |x-x'| \\
	& + \1_{\{y x'< 0\}}\left\{\frac{\kappa}{\log(\delta)} + 1\right\}|x-x'| + \1_{\{yx'\geq 0\}}\left\{\frac{\1_{(0,\varepsilon]}(|y|)}{\log \delta} \bigg(\frac{1}{|y|} \wedge \frac{\delta}{\varepsilon}\bigg) |x'| \int_0^u z^2 \nu(dz) + \int_u^{\infty} z \nu(dz)\right\}|x-x'|\bigg].
	\end{align*}
In addition, if $\alpha_\nu = 2$ then $\alpha = 2$.
\end{Lem}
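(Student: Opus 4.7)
The plan is to Taylor-expand in the $x$ variable and decompose the integrand
\[
D(z) := \phi_{\delta,\varepsilon}(y+xz) - \phi_{\delta,\varepsilon}(y+x'z) - (x-x')z\,\phi_{\delta,\varepsilon}'(y)
\]
into two pieces, one isolating the small increment $\Delta x := x-x'$ and the other isolating the $x'$-shift. Using
\[
\phi_{\delta,\varepsilon}(y+xz) - \phi_{\delta,\varepsilon}(y+x'z) = \Delta x\, z\int_0^1 \phi_{\delta,\varepsilon}'\bigl(y+x'z+\theta\Delta x\,z\bigr)\,d\theta,
\]
I write $D(z) = A(z) + B(z)$ with
\[
A(z) = \Delta x\, z\int_0^1 \bigl[\phi_{\delta,\varepsilon}'(y+x'z+\theta\Delta x\,z) - \phi_{\delta,\varepsilon}'(y+x'z)\bigr]\,d\theta, \quad B(z) = \Delta x\, z\bigl[\phi_{\delta,\varepsilon}'(y+x'z) - \phi_{\delta,\varepsilon}'(y)\bigr].
\]
The point is that $A$ depends only on the small increment $\Delta x$, while $B$ carries the entire sensitivity to the relative sign of $y$ and $x'$.

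For $A$, the bounds $|\phi_{\delta,\varepsilon}'| \leq 1$ and $|\phi_{\delta,\varepsilon}''| \leq \frac{2\delta}{\varepsilon\log\delta}$ from \eqref{phi2}--\eqref{phi4} yield the two pointwise estimates $|A(z)| \leq 2|\Delta x|z$ and $|A(z)| \leq \frac{\delta}{\varepsilon\log\delta}|\Delta x|^2 z^2$. I split the $z$-integral at the crossover $z^\star := \frac{\varepsilon\log\delta}{\delta|\Delta x|}$ where these match (with a secondary split at $z=1$ to handle the large-jump regime, using $\int_1^\infty z\,\nu(dz)<\infty$), and invoke \eqref{left:right:tail:asymptotics} for any $\alpha \in (\alpha_\nu,2]$ to obtain
\[
\int_0^\infty |A(z)|\,\nu(dz) \leq C\!\left[\Bigl(\frac{\delta}{\varepsilon\log\delta}\Bigr)^{\alpha-1}|\Delta x|^\alpha + |\Delta x|\right].
\]
Since $\alpha - 1 \in [0,1]$, the prefactor $(\frac{\delta}{\varepsilon\log\delta})^{\alpha-1}$ is dominated by $\frac{\delta}{\varepsilon\log\delta} + 1$, giving the first two terms of the advertised bound. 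In the special case $\alpha_\nu = 2$ one takes $\alpha = 2$ throughout, using the validity of \eqref{left:right:tail:asymptotics} at $\alpha = 2$.

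For $B$, the two sign regimes are treated separately. When $yx' \geq 0$, the point $y+\tau x'z$ keeps the sign of $y$ for $\tau \in [0,1]$ with $|y+\tau x'z| \geq |y|$; this allows the same $\phi_{\delta,\varepsilon}''$-estimate that underpins Lemma \ref{key_lem_0} to give
\[
|\phi_{\delta,\varepsilon}'(y+x'z) - \phi_{\delta,\varepsilon}'(y)| \leq \frac{2|x'z|\,\1_{(0,\varepsilon]}(|y|)}{\log\delta}\Bigl(\frac{1}{|y|} \wedge \frac{\delta}{\varepsilon}\Bigr),
\]
and, together with the trivial bound $|B(z)| \leq 2|\Delta x|z$ split at the prescribed $u$, produces the $\1_{\{yx'\geq 0\}}$ contribution in the target. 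When $yx' < 0$, I may assume WLOG $y>0$ and $x'<0$, whence $|x'| \leq \kappa y$; the \emph{key} observation is that for $z \leq 1/(2\kappa)$ one has $|x'|z \leq y/2$, so $y+\tau x'z \in [y/2,y]$ and the same argument applied with $y/2$ in place of $|y|$ delivers the $y$-independent bound $|\phi_{\delta,\varepsilon}'(y+x'z)-\phi_{\delta,\varepsilon}'(y)| \leq 4\kappa z/\log\delta$, hence $|B(z)| \leq (4\kappa/\log\delta)|\Delta x|z^2$. For $z > 1/(2\kappa)$ I fall back on $|\phi_{\delta,\varepsilon}'| \leq 1$ and $|B(z)| \leq 2|\Delta x|z$. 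Integration uses $\int_0^1 z^2\,\nu(dz)<\infty$ on the first piece and $\int_{1/(2\kappa)}^\infty z\,\nu(dz) \leq C(1+\kappa)$ on the second (the $\kappa$-factor arising when $\kappa > 1/2$ via $z\,\nu(dz) \leq 2\kappa\,z^2\,\nu(dz)$ on $(1/(2\kappa),1)$), which gives the $\{\frac{\kappa}{\log\delta}+1\}|\Delta x|$ contribution up to absorbing $\kappa$-dependent constants into $C$.

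The main obstacle is precisely the $yx'<0$ regime: a crude Lipschitz bound on $\phi_{\delta,\varepsilon}'$ brings in a factor of $|y|$ (through $|x'|$) that cannot be tolerated in the final estimate, and the Lipschitz-type constraint $|x'| \leq \kappa|y|$ must be exploited to calibrate the $z$-split at the universal scale $1/(2\kappa)$, so that the fine $\phi_{\delta,\varepsilon}''$-estimate applies uniformly in $y$.
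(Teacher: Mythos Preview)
Your proof is correct and follows essentially the same route as the paper: the decomposition $D=A+B$ (with $A$ the second-order remainder at $y+x'z$ and $B$ the first-derivative increment) is identical, and the treatment of $B$ in both sign regimes matches the paper's argument almost line for line. The only cosmetic differences are that for $A$ you balance the two pointwise bounds at the crossover scale $z^\star$ (with a secondary cut at $1$), whereas the paper instead dichotomizes on whether $|x-x'|$ is below or above a fixed threshold $k$ and then chooses $u=1$ or $u=|x-x'|^{-1}$; and for $B$ in the case $yx'<0$ you split at the universal scale $1/(2\kappa)$ while the paper splits at $\frac{|y|}{2|x'|}\wedge 1$. Both variants lead to the same estimate. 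One small bookkeeping point: when $\kappa<1/2$ your ``small-jump'' region $[0,1/(2\kappa)]$ extends past $1$, so invoking $\int_0^1 z^2\,\nu(dz)<\infty$ alone is not quite enough---you need the companion observation $z^2\le z/(2\kappa)$ on $[1,1/(2\kappa)]$ together with $\int_1^\infty z\,\nu(dz)<\infty$, which costs an extra $C/\log\delta$ that is harmlessly absorbed (as you already note) into the $\kappa$-dependent constant.
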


\begin{proof}
Note that for $x = x'$, the claimed inequality is trivially true. From now on we suppose that $x\neq x'$. To obtain the required estimate, we consider
	\begin{align*}
	&\left| \phi_{\delta,\varepsilon}(y+xz)-\phi_{\delta,\varepsilon}(y+x'z)-(x-x')z\phi_{\delta,\varepsilon}'(y) \right|\\
	&\leq \left| \phi_{\delta,\varepsilon}(y+xz)-\phi_{\delta,\varepsilon}(y+x'z)-(x-x')z\phi_{\delta,\varepsilon}'(y+x'z) \right|
	+|x-x'||z|\left| \phi_{\delta,\varepsilon}'(y)-\phi_{\delta,\varepsilon}'(y+x'z) \right| =: A_z + B_z.
	\end{align*}	
Let $u\in (0,\infty)$. To estimate $A_z$ for $z \in (0,u)$, we apply a second order Taylor's expansion for $\phi_{\delta,\varepsilon}$ and use \eqref{phi4}. This gives
	\begin{align*}
	A_z \leq |x-x'|^2 |z|^2 \int_{0}^{1} \theta \phi_{\delta,\varepsilon}''(y+ \theta xz+(1-\theta)x'z)d \theta \leq |x-x'|^2 |z|^2 \frac{2\delta}{\varepsilon \log \delta}
\end{align*}	
while for $z \in (u,\infty)$, by the mean value theorem and \eqref{phi2}, we get 
	\begin{align*}
	A_z & \leq |x-x'| |z| \int_{0}^{1} \left|\phi'_{\delta,\varepsilon}(y+\theta xz+(1-\theta)x'z)-\phi'_{\delta,\varepsilon}(y) \right| d\theta
	\leq 2 |x-x'| |z|.
	\end{align*}
	

To this end, for some positive $k$ (which is chosen later) one considers the two cases $|x-x'| \leq k$ and $|x-x'|\geq k$. In the first case, take $u=1$ so that
$$
\int_0^\infty A_z \nu(dz) \leq C \left\{ |x-x'|^2 \frac{2\delta}{\varepsilon \log(\delta)} + |x-x'|\right\} \leq C_k \left\{ |x-x'|^{\alpha}\frac{2\delta}{\varepsilon \log(\delta)} + |x-x'|\right\} 
$$
\noindent for any $\alpha \in [0, 2]$.
In the second case $|x-x'|\geq k$, we select $u = |x-x'|^{-1} \in (0,k^{-1})$ and remark that
$$
\int_0^{|x-x'|^{-1}} z^2 \nu(dz) = |x-x'|^{\alpha-2} \frac{1}{|x-x'|^{\alpha-2}} \int_0^{|x-x'|^{-1}} z^2 \nu(dz) \leq |x-x'|^{\alpha-2} \sup_{\varepsilon \in [0,k^{-1}]}I^{1}_\varepsilon
$$
\noindent with $I^{1}_\varepsilon := \varepsilon^{\alpha-2} \int_0^{\varepsilon} z^2 \nu(dz)$. Note that $\lim_{\varepsilon \downarrow 0}I^{1}_\varepsilon = 0$ for any $\alpha > \alpha_\nu$ by \eqref{left:right:tail:asymptotics}. Similarly,
$$
\int_{|x-x'|^{-1}}^{\infty} z \nu(dz) = |x-x'|^{\alpha-1} \frac{1}{|x-x'|^{\alpha-1}} \int_{|x-x'|^{-1}}^{\infty} z \nu(dz) \leq |x-x'|^{\alpha-1} \sup_{\varepsilon \in [0,k^{-1}]}I^{2}_\varepsilon
$$
\noindent with $I^{2}_\varepsilon := \varepsilon^{\alpha-1} \int_{\varepsilon}^{\infty} z \nu(dz)$. Again we note that $\lim_{\varepsilon \downarrow 0}I^{2}_\varepsilon = 0$ for any $\alpha > \alpha_\nu$ by definition of $\alpha_\nu$. We thus conclude that for any positive constant $C$, one can pick $k$ sufficiently large, such that 
$$
\forall \alpha \in (\alpha_\nu, 2], \quad \int^\infty_0 A_z\, \nu(dz) \leq C \left\{|x-x'|^{\alpha}  \frac{2\delta}{\epsilon \log \delta} + |x-x'| \right\}.
$$

 We now deal with the term $B_z$. Let us first assume that $y x' \geq 0$. We perform a second order Taylor's expansion and employ \eqref{phi4} to obtain
\begin{align*}
	B_z & \leq |x'| |x-x'| |z|^2 \int_{0}^{1} \phi_{\delta,\varepsilon}''(y+ \theta x'z) \theta d\theta	\\
& \leq  2 \frac{|x'||x-x'|z^2}{\log(\delta)} \int_0^1   \frac{\1_{[\varepsilon/\delta,\varepsilon]}(|y+\theta x' z|)}{|y+\theta x'z|} \, \theta d\theta\\
&  \leq 2 \frac{|x'||x-x'|z^2 \1_{[0,\varepsilon)}(|y|)}{\log \delta}  \bigg( \frac{1}{|y|}\wedge \frac{\delta}{\varepsilon} \bigg)
\end{align*}

\noindent where for the last inequality we used the fact that $|y| \leq |y+\theta x'z|$ since $yx'\geq0$ and $z\geq0$.  Also, it is readily seen that $B_z \leq 2|x-x'||z|$. We thus conclude that if $y x'\geq0$, for any $u \in (0,\infty)$
$$
\int_{0}^{\infty} B_z \nu(dz) \leq |x'||x-x'| \frac{\1_{[0,\varepsilon)}(|y|)}{\log \delta}  \bigg( \frac{1}{|y|}\wedge \frac{\delta}{\varepsilon} \bigg) \int_{0}^{u} z^2 \nu(dz) + 2 |x-x'| \int_{u}^{\infty} z \nu(dz).
$$
We now treat the case $y x' < 0$. We split the $\nu(dz)$-integral into the two disjoint sets $\frac{|y|}{2|x'|}\wedge 1 < z$ and $\frac{|y|}{2|x'|}\wedge 1 \geq z$. In the case of small jumps, i.e. on the set $\frac{|y|}{2|x'|} \wedge 1 \geq z$, from the mean-value theorem and \eqref{phi4}, we obtain
\begin{align*}
B_z & = |x'||x-x'|z^2 \int_{0}^{1} \phi_{\delta,\varepsilon}''(y+ \theta x'z)d \theta	\\
& \leq 2 |x'||x-x'| z^2 \int_0^1 \frac{\1_{[\varepsilon / \delta,\varepsilon)}(|y+ \theta x'z|)}{|y+ \theta x'z|\log(\delta)} \, d\theta\\
& \leq 2 |x'||x-x'| z^2 \frac{\1_{[0,2\varepsilon)}(|y|)}{|y|\log(\delta)}
\end{align*}
where, for the last inequality, we used the fact that $yx' < 0$ and $\frac{|y|}{2|x'|} \wedge 1 \geq z$ imply
\begin{gather*}
|y+ \theta z x'| = |y(1 - \theta z|x'||y|^{-1}|)|  \geq \frac{|y|}{2}.
\end{gather*}
Now, observe that since $y x' < 0$, one has $0\leq -\sign(y) x' = |x'| \leq \kappa |y|$, which combined with the previous computations yield
$$
\int_0^{\frac{|y|}{2|x'|} \wedge 1} B_z \nu(dz) \leq 2 \kappa \frac{|x-x'|}{\log(\delta)} \int_{0}^{1} z^2 \nu(dz). 
$$
For large jumps, i.e. one the set $\frac{|y|}{2|x'|} \wedge 1 < z$, from \eqref{phi2}, we simply note that $B_z \leq 2|x-x'||z|$ so that
$$
\int_{\frac{|y|}{2|x'|} \wedge 1}^{\infty} B_z \nu(dz) \leq 2 |x-x'| \int_{\frac{|y|}{2|x'|} \wedge 1}^{\infty} z \nu(dz) \leq C |x-x'|
$$
\noindent where we used the facts that $|x'| \leq \kappa |y|$ and $\int_{1}^{\infty} z \nu(dz)$ for the last inequality. The proof is now complete.
\end{proof}

%


\begin{thebibliography}{9}

\bibitem{bo:capponi} L. Bo and A. Capponi, \textit{Systemic Risk in Interbanking Networks}, SIAM J. Financial Math., Vol. 6, pp 386-424, 2015.

\bibitem{CHAUDRUDERAYNAL} P.-E., Chaudru de Raynal, \textit{Strong well posedness of McKean-Vlasov stochastic differential equations with H\"older drift}, {Stochastic Processes and their Applications}, Vol. 130, 1, {79--107}, 2020

\bibitem{chau:frik:18} {P.-E, Chaudru de Raynal and N., Frikha}, \textit{Well-posedness for some non-linear diffusion processes and related {PDE} on the {W}asserstein space}, under revision for {Journal de Math\'ematiques Pures et Appliqu\'ees}, 76p., 2018.

\bibitem{chau:frik:18:b} {P.-E, Chaudru de Raynal and N., Frikha}, \textit{From the Backward Kolmogorov PDE on the Wasserstein space to propagation of chaos for McKean-Vlasov SDEs}, accepted for publication in {Journal de Math\'ematiques Pures et Appliqu\'ees}, 59p., 2019.

\bibitem{FGS} C., Fontana, A,. Gnoatto and G,. Szulda, {\it Multiple Yield Curve Modelling With CBI Processes}, Working Paper, 2019.

\bibitem{fouque:ichiba} J.-P. Fouque and T. Ichiba, \textit{Stability in a Model of Interbank Lending}, SIAM J. Financial Math., Vol. 4, pp 784-803, 2013.

\bibitem{Fournier} N. Fournier, \textit{On pathwise uniqueness for stochastic differential equations driven by stable L\'evy processes}, Ann. Inst. H. Poincar\'e Probab. Statist., Vol. 49, 138-159, 2013.

\bibitem{Fournier:Guillin} {N. Fournier and A., Guillin}, \textit{On the rate of convergence in Wasserstein distance of the empirical measure},  {Probability Theory and Related Fields}, Vol. 162, 707--738, 2015.

\bibitem{Frikha:Konakov:Menozzi} {N. Frikha, V. Konakov and S. Menozzi}, \textit{Well-posedness of some non-linear stable driven SDEs}, Preprint arXiv:1910.05945, 2019.

\bibitem{Fu:Li} {Z. Fu and Z. Li}, \textit{Stochastic equations of non-negative processes with jumps}, Stoch. Process. Appl., Vol. 120, Issue 3, 306–330, 2010.

\bibitem{Funaki1984} {T., Funaki}, \textit{A certain class of diffusion processes associated with nonlinear parabolic equations}, {Zeitschrift f{\"u}r Wahrscheinlichkeitstheorie und Verwandte Gebiete}, Vol. 67, {3}, {331--348}, 1984.

\bibitem{gartner} {J. G{\"a}rtner}, {On the {M}c{}Kean-{V}lasov {L}imit for {I}nteracting {D}iffusions}, {Mathematische Nachrichten}, Vol. 137, 1, 197-248, 1988.

\bibitem{Gyongy:rasonyi} {I., Gy\"ongy and M., R\'asonyi}, \textit{A note on Euler approximations for SDEs with H\"older continuous diffusion coefficents}, {Stochastic. Process. Appl.}, 121, 2189--2200, 2011.

\bibitem{Ha} H., Hashimoto, {\it Approximation and stability of solutions of SDEs driven by a symmetric a stable process with non-Lipschitz coefficients,} {\it S\'eminaire de probabilit\'es} XLV.

\bibitem{HaTsu} H., Hashimoto, and T., Tsuchiya: {\it On the convergent rates of Euler-Maruyama schemes for SDEs driven by rotation invariant $\alpha$-stable processes,} in Japanese, RIMS Kokyuroku, 229--236, 2013.

\bibitem{holding} {T., Holding}, \textit{Propagation of chaos for {H}{\"o}lder continuous interaction kernels via {G}livenko-{C}antelli}, {arXiv:1608.02877}, 2016.

\bibitem{Jacod:Shiryaev} {J., Jacod and A.N., Shiryaev}, \textit{Limit theorems for stochastic processes}, Second edition, Springer-Verlag, Berlin, 2003.

\bibitem{jourdain:1997} {B., Jourdain}, \textit{Diffusions with a nonlinear irregular drift coefficient and probabilistic interpretation of generalized Burgers' equations}, {ESAIM: PS}, Vol. 1, 339--355, 1997.

\bibitem{jour:mele:woyc:08} {B., Jourdain, S., M\'{e}l\'{e}ard and W. A., Woyczynski}, \textit{Nonlinear {SDE}s driven by {L}\'{e}vy processes and related {PDE}s}, {ALEA Lat. Am. J. Probab. Math. Stat.}, Vol. 4, {1--29}, 2008.

\bibitem{GSS}
K., Giesecke, K., Spiliopoulos and R., Sowers, {\it Default clustering in Large Portfolios: Typical events}, The Annals of Applied Probability, Vol. 23, No. 1, 348–385, 2013.

\bibitem{GSSS} K., Giesecke, K., Spiliopoulos, R., Sowers and J., Sirignano, {\it Large Portfolio asymptotics for loss from default}, Mathematical Finance, 25, 1, pp 77--114, 2015.

\bibitem{JMS} Y., Jiao, C., Ma and S., Scotti, {\it Alpha-CIR model with branching processes in sovereign interest rate modelling,} Finance Stoch., {\bf 21}(3), 789--813, 2017.

\bibitem{JMSS} Y., Jiao, C., Ma, S., Scotti and C., Sgarra, {\it A Branching Process Approach to Power Markets,} to appear in Energy Econ., 2018.

\bibitem{JMSZ} Y., Jiao, C., Ma, S., Scotti and C., Zhou, {\it The Alpha-Heston Stochastic Volatility Model,} Working Paper 2019.

\bibitem{la:18} {D., Lacker}, \textit{On a strong form of propagation of chaos for {M}c{K}ean-{V}lasov equations}, {Electron. Commun. Probab.}, Vol. 23, 11pp., 2018.

\bibitem{Li:min:2} {J., Li, and H., Min},  {Weak Solutions of Mean-Field Stochastic Differential Equations and Application to Zero-Sum Stochastic Differential Games}, {SIAM Journal on Control and Optimization}, Vol. 54, 3, 1826-1858, 2016.

\bibitem {Li:Mytnik} Z., Li and L, Mytnik, \textit{Strong solutions for stochastic differential equations with jumps}, {Ann. Inst. Henri Poincar\'{e} Probab. Stat.}, Vol. {47}, 1055--1067, 2011.
 
\bibitem {Li:Taguchi:a} L., Li and D., Taguchi, \textit{On the Euler–Maruyama scheme for spectrally one-sided Lévy driven SDEs with H\"{o}lder continuous coefficients}, Statistics \& Probability Letters, Vol.146, 15-26, 2019.

\bibitem{Li:Taguchi:b} L., Li and D., Taguchi, \textit{On a positivity preserving numerical scheme for jump-extended CIR process: the alpha-stable case}, {BIT Numerical Mathematics}, Vol. 59, 3, 747-774, 2019.

\bibitem{mishura:veretenikov} {Y. S., Mishura and A. Y., Veretennikov}, \textit{Existence and uniqueness theorems for solutions of {M}c{K}ean--{V}lasov stochastic equations}, Preprint {arXiv:1603.02212}, {2018}.

\bibitem{oelschlager1984} {K., Oelschlager}, \textit{A Martingale Approach to the Law of Large Numbers for Weakly Interacting Stochastic Processes}, {Ann. Probab.}, Vol. 12, 2, {458--479}, 1984.

\bibitem{Situ} R., Situ, \textit{Theory of stochastic differential equations with jumps and applications}, Mathematical and analytical techniques with applications to engineering, Springer, New York, 2005.
     
\bibitem{Sznitman} {A.-S., Sznitman}, \textit{Topics in propagation of chaos}, {Ecole d'Et{\'e} de Probabilit{\'e}s de Saint-Flour XIX --- 1989}, {Hennequin, Paul-Louis}, {Springer Berlin Heidelberg}, 165--251, 1991.

\bibitem{vill:09} {C., Villani}, \textit{Optimal transport: old and new}, Springer-Verlag, Berlin, 2009.
	
\bibitem{yamada:watanabe}{T., Yamada and S., Watanabe}, \textit{On the uniqueness of solutions of stochastic differential equations}, J. Math. Kyoto Univ. 11, 155--167 (1971).	

	
\end{thebibliography}
\end{document}